\numberwithin{equation}{section}
\theoremstyle{plain}
\newtheorem{thm}{Theorem}[section]
\newtheorem{cor}[thm]{Corollary} %%Delete [thm] to re-start numbering
\newtheorem{lemma}[thm]{Lemma} %%Delete [thm] to re-start numbering
\theoremstyle{remark}
\newtheorem{remark}[thm]{Remark}
\newtheorem{remarks}[thm]{Remarks}
\theoremstyle{definition}
\def\question#1{\ifmmode\text{\bf \color{red} Q: #1}\else{\bf
    Q}\footnote{#1}\fi:\color{red}}
\newcommand{\la}{\lambda}
\newcommand\Reals{{\mathbb R}}
\newcommand\Complex{{\mathbb C}}
\newcommand\Nats{{\mathbb N}}
\newcommand{\BH}{\mathcal{B}(\mathcal{H})}
\newcommand{\Hcal}{\mathcal{H}}
\newcommand{\fS}{\mathcal{S}}
\newcommand\Tr{{\mathrm{Tr}}}
\newcommand{\st}{\,:\,}
\renewcommand{\i}{\text{\rm i}}
\newcommand{\supp}{\text{\rm supp}}
\begin{document}

\title{Asymptotic expansions for trace functionals}

\author[Skripka]{Anna Skripka$^{*}$} \email{skripka@math.unm.edu}

\thanks{\indent\llap{${}^{*}$}Research supported in part by NSF
  grant DMS-1249186.}

\address{A.S., Department of Mathematics and Statistics, University of New Mexico, 400 Yale Blvd NE, MSC01 1115, Albuquerque, NM 87131, USA}

\subjclass[2000]{Primary 47A55, 47B10}

\keywords{Perturbation theory, Taylor approximation}

%\date{\today}

\begin{abstract}
We obtain Taylor approximations for functionals $V\mapsto\Tr\big(f(H_0+V)\big)$ defined on the bounded self-adjoint operators, where $H_0$ is a self-adjoint operator with compact resolvent and $f$ is a sufficiently nice scalar function, relaxing assumptions on the operators made in \cite{vanS}, and derive estimates and representations for the remainders of these approximations.
\end{abstract}

\maketitle

\section{Introduction}
Let $H_0$ be an unbounded self-adjoint operator, $V$ a bounded self-adjoint operator on a separable Hilbert space $\mathcal{H}$, $f$ a sufficiently nice scalar function, and let $f(H_0+V)$ be defined by the standard functional calculus. The functionals $f\mapsto \Tr\big(f(H_0+V)\big)$ and $V\mapsto \Tr\big(f(H_0+V)\big)$ or their modifications
have been involved in problems of perturbation theory (of, for instance, differential operators) and noncommutative geometry since as early as 1950's (see, e.g., \cite{Azamov0,ACS,B,CC,DK,Hansen,L,vanS}). The latter functional in the context of noncommutative geometry is called the spectral (action) functional \cite{CC}.

Assume that the resolvent of $H_0$ belongs to some Schatten ideal (or, more generally, $\Tr\big(e^{-t H_0^2}\big)<\infty$, for any $t>0$), $\|\delta(V)\|<\infty$, $\|\delta^2(V)\|<\infty$, where $\delta(\cdot)=[|H_0|,\cdot]$, and $f$ is a sufficiently nice even function. Let $\{\mu_k\}_{k=1}^\infty$ be a sequence of eigenvalues of $H_0$ counting multiplicity and let $\{\psi_k\}_{k=1}^\infty$ be an orthonormal basis of the respective eigenvectors. The asymptotic expansion of the spectral action functional
\begin{align}
\label{2}
&\Tr\big(f(H_0+V)\big)=\Tr\big(f(H_0)\big)\\
\nonumber
&\quad+\sum_{p=1}^\infty \frac1p\sum_{i_1,\ldots,i_p}(f')^{[p-1]}(\mu_{i_1},\ldots,\mu_{i_p})\,
\left<V\psi_{i_1},\psi_{i_2}\right>\cdots\left<V\psi_{i_{p-1}},\psi_{i_p}\right>\left<V\psi_{i_p},\psi_{i_1}\right>,
\end{align}
where $(f')^{[p-1]}$ is the divided difference of order $p-1$ of the function $f'$, was derived in \cite{vanS}, extending the results of \cite{Hansen} for finite-dimensional operators. (The precise assumptions on $H_0$, $V$, and $f$ can be found in \cite[Theorem 18]{vanS}.)

In this paper, we obtain the asymptotic expansion \eqref{2} under relaxed assumptions on $H_0$ and $V$ and find bounds for the remainders of the respective approximations by taking a different approach to the problem. Specifically, we assume that $H_0=H_0^*$ has compact resolvent, $V=V^*\in\BH$  (where $\BH$ is the algebra of bounded linear operators on $\mathcal{H}$), and $f$ is a sufficiently nice compactly supported function (but no summability restriction on $H_0$ is made, $H_0$ is not assumed to be positive, and $f$ is not assumed to be even). Let
\begin{align}
\label{eq:sum}
&R_{H_0,f,n}(V):=\Tr\big(f(H_0+V)\big)-\Tr\big(f(H_0)\big)\\
\nonumber
&\quad-\sum_{p=1}^{n-1} \frac1p\sum_{i_1,\ldots,i_p}(f')^{[p-1]}(\mu_{i_1},\ldots,\mu_{i_p})\,
\left<V\psi_{i_1},\psi_{i_2}\right>\cdots\left<V\psi_{i_{p-1}},\psi_{i_p}\right>\left<V\psi_{i_p},\psi_{i_1}\right>.
\end{align}
In Theorem \ref{asexp} and Corollary \ref{asexpev}, we establish the bound
\begin{align}
\label{r1}
|R_{H_0,f,n}(V)|=\mathcal{O}\big(\|V\|^n\big)
\end{align}
and find an explicit estimate for $\mathcal{O}\big(\|V\|^n\big)$ in Theorem \ref{derbound} and Remark \ref{remarks}\eqref{remarksi}. (The case $n=1$ also follows from \cite{ACS}.)
%In Theorem \ref{asexpL2}, we show that \eqref{r1} holds for smooth rapidly decaying functions $f$, provided $H_0$ has Hilbert-Schmidt resolvent and $V$ is an arbitrary bounded self-adjoint operator. We also show that for every $n\in\Nats$,
If, in addition, $H_0$ has Hilbert-Schmidt resolvent, we refine the bound \eqref{r1} of Theorem \ref{asexp} in Theorem \ref{asexpL2}. In Theorem \ref{ac}, we show that the functional $C_c^{3}(\Reals)\ni f''\mapsto R_{H_0,f,2}(V)$ is given by a locally finite absolutely continuous measure. (An analogous result for the functional $f'\mapsto R_{H_0,f,1}(V)$) was obtained in \cite{ACS}.)

\section{Preliminaries}

The asymptotic expansion \eqref{2} can be rewritten as
\begin{align}
\label{3}
\nonumber
&\Tr\big(f(H_0+V)\big)=\Tr\big(f(H_0)\big)\\
&\quad+\sum_{p=1}^\infty \frac1p\sum_{\la_1,\ldots,\la_p\in \text{\rm spec}(H) }(f')^{[p-1]}(\la_1,\ldots,\la_p)\,
\Tr\big(E_{H_0}(\la_1)V\ldots VE_{H_0}(\la_p)V\big),
\end{align}
where $E_{H_0}$ is the spectral measure of $H_0=H_0^*$.

In this section, we justify that the traces in \eqref{3} are well defined and prepare a technical base for the derivation of \eqref{3}.
\bigskip

%The summands appearing in \eqref{3} can be expressed via multiple operator integrals, which are our main tool in %the paper.

\paragraph{\bf Functional calculus}
We start with recalling some useful features of functional calculus for self-adjoint operators with compact resolvents.
Note that if the resolvent of an operator is compact at one point, then it is compact at all points of its domain. Note also that %since the space of compact operators forms an ideal in $\BH$, then
$(\i+H_0)^{-1}$ is compact if and only if $|(\i+H_0)^{-1}|=(1+H_0^2)^{-1/2}$ is compact.

By standard properties of the resolvent, we have

\begin{lemma} $($\cite[Lemma 1.3]{ACS}$)$
\label{compres}
If $H_0=H_0^*$ is defined in $\mathcal{H}$ and has compact resolvent and if $W=W^*\in\BH$, then $H_0+W$ also has compact resolvent.
\end{lemma}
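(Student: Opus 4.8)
The plan is to combine the second resolvent identity with the fact that the compact operators form a two-sided ideal in $\BH$. First I would observe that, since $W=W^*\in\BH$, the operator $H_0+W$ is self-adjoint on $\domain{H_0}$ (a bounded self-adjoint perturbation leaves the domain and self-adjointness intact), so every non-real number lies in the resolvent sets of both $H_0$ and $H_0+W$; fix $z=\i$. Writing $R_0=(H_0-\i)^{-1}$ and $R=(H_0+W-\i)^{-1}$, the second resolvent identity gives
\[
R = R_0 - R\,W\,R_0 .
\]
By hypothesis $R_0$ is compact, while $R$ and $W$ are bounded, so $R\,W\,R_0$ is compact; being the sum of two compact operators, $R$ is compact. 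By the remark preceding the lemma, compactness of the resolvent at the single point $\i$ then forces it to be compact at every point of the resolvent set of $H_0+W$, which is the assertion.

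An alternative route, which sidesteps invoking self-adjointness of $H_0+W$ directly, is to use a Neumann series: for $|\im z|>\|W\|$ one has $\|W(H_0-z)^{-1}\|<1$, so $\identity+W(H_0-z)^{-1}$ is boundedly invertible, and the factorization $H_0+W-z=\bigl(\identity+W(H_0-z)^{-1}\bigr)(H_0-z)$ on $\domain{H_0}$ yields
\[
(H_0+W-z)^{-1}=(H_0-z)^{-1}\bigl(\identity+W(H_0-z)^{-1}\bigr)^{-1},
\]
which is again compact as a product of a compact operator and a bounded one; one then passes to all $z$ in the resolvent set as before.

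I do not expect a genuine obstacle here: the only point needing a little care is checking that $H_0$ and $H_0+W$ share the domain $\domain{H_0}$ and that some non-real $z$ (e.g. $z=\i$) belongs to both resolvent sets, which is immediate once $H_0+W$ is recognized as self-adjoint. Everything else is the ideal property of $\mathcal{K}(\mathcal{H})$ applied to the resolvent identity.
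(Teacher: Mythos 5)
Your proof is correct, and it is exactly the ``standard properties of the resolvent'' argument the paper alludes to: the paper gives no proof of its own, citing \cite[Lemma 1.3]{ACS}, and that proof is the same second-resolvent-identity-plus-ideal-property computation you give (with your Neumann-series variant being an equally standard alternative).
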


%Some functions of a perturbed operator can be estimated via functions of an initial operator as follows.

\begin{lemma}$($\cite[Appendix B, Lemma 6]{CPh}$)$
\label{resest}
If $H_0=H_0^*$ is defined in $\mathcal{H}$ and if $W=W^*\in\BH$, then
\begin{align*}
\big(1+(H_0+W)^2\big)^{-1}\leq\left(1+\|W\|+\|W\|^2\right)(1+H_0^2)^{-1}.
\end{align*}
\end{lemma}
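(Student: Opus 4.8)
The plan is to reduce the asserted operator inequality to an elementary scalar optimization through two standard reductions. Throughout write $c:=1+\|W\|+\|W\|^2$ and $T:=H_0+W$; since $W$ is bounded, $T$ is self-adjoint with $\domain{T}=\domain{H_0}$. First I would invoke the variational description of the resolvent of $1+S^2$: for self-adjoint $S$ and $\eta\in\Hcal$,
\[
\big\langle(1+S^2)^{-1}\eta,\eta\big\rangle=\sup_{\xi\in\domain{S}}\Big\{2\re\langle\xi,\eta\rangle-\|\xi\|^2-\|S\xi\|^2\Big\},
\]
the supremum being attained at $\xi=(1+S^2)^{-1}\eta$. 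Writing this out for $S=T$, and for $S=H_0$ after the substitution $\xi\mapsto c\xi$ (so that the two suprema then run over the same index set $\domain{H_0}=\domain{T}$), one sees that the desired bound
\[
\big(1+T^2\big)^{-1}\le c\,(1+H_0^2)^{-1}
\]
follows from the quadratic form inequality
\[
\|\xi\|^2+\|H_0\xi\|^2\le c\big(\|\xi\|^2+\|T\xi\|^2\big)\qquad\text{for every }\xi\in\domain{H_0}.
\]

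Second, I would verify this form inequality pointwise by scalarizing it. Fix $\xi\in\domain{H_0}$ and put $a:=\|H_0\xi\|$, $b:=\|\xi\|$; then $\|W\xi\|\le\|W\|\,b$ and, by the triangle inequality, $\|T\xi\|\ge\big|\,\|H_0\xi\|-\|W\xi\|\,\big|$, so it is enough to establish
\[
b^2+a^2\le c\big(b^2+(a-v)^2\big)\qquad\text{for all }a,b\ge0,\ \ 0\le v\le\|W\|\,b.
\]
The difference of the two sides equals $(c-1)a^2-2cva+cv^2+(c-1)b^2$, a quadratic in $a$ with positive leading coefficient $c-1=\|W\|+\|W\|^2$; its minimum over $a\in\Reals$ is attained at $a=cv/(c-1)\ge0$ and equals $(c-1)b^2-cv^2/(c-1)$, which is nonnegative precisely when $c\,v^2\le(c-1)^2b^2$. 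Since $v\le\|W\|b$ and $(c-1)^2-c\|W\|^2=\|W\|^3\ge0$ by a direct expansion, this holds; note that the constant $c=1+\|W\|+\|W\|^2$ is exactly the value that makes the last inequality close.

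The only point I expect to require care is the first reduction: for unbounded $H_0$ one must use the standard facts that $\domain{(1+S^2)^{1/2}}=\domain{|S|}=\domain{S}$ and $(1+S^2)^{-1}\Hcal\subseteq\domain{S}$ for self-adjoint $S$, so that the suprema above are legitimately indexed by $\domain{H_0}=\domain{T}$; granting this, the remainder is the routine scalar estimate displayed above. (A cruder route is the resolvent identity $(\i+T)^{-1}=(\i+H_0)^{-1}\big(1-W(\i+T)^{-1}\big)$, which together with $\|(\i+T)^{-1}\|\le1$ gives $(1+T^2)^{-1}\le(1+\|W\|)^2(1+H_0^2)^{-1}$ at once; but only the form computation delivers the sharper constant claimed in the lemma.)
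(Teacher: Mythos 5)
The paper does not prove this lemma at all: it is imported verbatim from \cite[Appendix B, Lemma 6]{CPh}, so there is no internal proof to compare against. Your argument is a correct, self-contained proof. The two reductions are sound: the variational identity
$\langle(1+S^2)^{-1}\eta,\eta\rangle=\sup_{\xi\in\domain{S}}\{2\re\langle\xi,\eta\rangle-\|\xi\|^2-\|S\xi\|^2\}$
is valid (complete the square around $\xi_0=(1+S^2)^{-1}\eta$, which lies in $\domain{S^2}\subseteq\domain{S}$, so the supremum is attained inside the index set), the rescaling $\xi\mapsto\xi/c$ correctly converts the target into the form inequality $\|\xi\|^2+\|H_0\xi\|^2\leq c\,(\|\xi\|^2+\|T\xi\|^2)$ on $\domain{H_0}=\domain{T}$, and the scalar minimization closes exactly because $(c-1)^2-c\|W\|^2=\|W\|^3\geq 0$ --- which also explains why $c=1+\|W\|+\|W\|^2$ is the natural constant for this method. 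In effect you are proving the quadratic-form inequality $1+H_0^2\leq c\,(1+(H_0+W)^2)$ and then inverting it; the variational formula is a clean way to justify the inversion step for unbounded forms (the usual alternative is to cite antitonicity of $A\mapsto A^{-1}$ on positive invertible operators), and this is essentially the argument in \cite{CPh}. Two cosmetic points: when $W=0$ the leading coefficient $c-1$ vanishes and your quadratic degenerates, but then $v=0$ and the inequality is the trivial identity, so nothing is lost; and your parenthetical ``cruder route'' does work but is not quite one line --- from $\|(H_0-\i)(T-\i)^{-1}\|\leq 1+\|W\|$ one should pass through $\|X^*X\|=\|XX^*\|$ with $X=(1+H_0^2)^{1/2}(T-\i)^{-1}$ before conjugating, rather than estimating $\|(T-\i)^{-1}\eta\|$ directly, since the latter produces the resolvent of $H_0$ applied to a different vector.
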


The following consequence was essentially established in \cite[Lemma 1.4]{ACS}.

\begin{cor}
\label{lemma:Efinite}
Let $H_0=H_0^*$ have compact resolvent and let $W=W^*$ be bounded. Then, for any compact subset $\delta$ of $\Reals$,
the spectral projection $E_{H_0+W}(\delta)$ has finite rank and
\begin{align}
\label{eq:Efinite}
E_{H_0+W}(\delta)\leq\left(1+\max_{s\in\delta}|s|^2\right)\cdot
\left(1+\|W\|+\|W\|^2\right)(1+H_0^2)^{-1}.
\end{align}
\end{cor}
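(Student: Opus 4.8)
The plan is to reduce the corollary to the operator inequality of Lemma~\ref{resest} together with the spectral mapping for the compact set $\delta$. First I would observe that for a self-adjoint operator $A$ and a compact set $\delta\subset\Reals$, one has the pointwise (Borel functional calculus) bound
\begin{align*}
\chi_\delta(A)\leq\left(1+\max_{s\in\delta}|s|^2\right)(1+A^2)^{-1},
\end{align*}
since on the spectral subspace $E_A(\delta)\Hcal$ the operator $1+A^2$ is bounded above by $1+\max_{s\in\delta}|s|^2$, hence $(1+A^2)^{-1}$ is bounded below by $(1+\max_{s\in\delta}|s|^2)^{-1}$ there, while on the complementary subspace $\chi_\delta(A)$ vanishes and $(1+A^2)^{-1}\geq 0$. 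Applying this with $A=H_0+W$ gives
\begin{align*}
E_{H_0+W}(\delta)\leq\left(1+\max_{s\in\delta}|s|^2\right)\big(1+(H_0+W)^2\big)^{-1}.
\end{align*}

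Next I would invoke Lemma~\ref{resest}, which yields $\big(1+(H_0+W)^2\big)^{-1}\leq\left(1+\|W\|+\|W\|^2\right)(1+H_0^2)^{-1}$. Chaining the two inequalities produces \eqref{eq:Efinite}. Finiteness of the rank of $E_{H_0+W}(\delta)$ then follows because $(1+H_0^2)^{-1}$ is compact (equivalently, the resolvent of $H_0$ is compact, as noted before Lemma~\ref{compres}), so the right-hand side of \eqref{eq:Efinite} is a compact positive operator dominating the projection $E_{H_0+W}(\delta)$; a projection dominated by a compact operator has finite-dimensional range, since on its range the compact operator is bounded below by $1$, which forces that range to be finite-dimensional.

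The only mild subtlety — really the one place to be slightly careful — is the domain/form issue: $H_0$ is unbounded, so $1+(H_0+W)^2$ and $1+H_0^2$ are unbounded, and the inequalities should be read in the form sense between the bounded inverse operators, exactly as in Lemma~\ref{resest}; the pointwise estimate $\chi_\delta(A)\leq(1+\max_{s\in\delta}|s|^2)(1+A^2)^{-1}$ is a genuine bounded-operator inequality because $\chi_\delta$ and $s\mapsto(1+s^2)^{-1}$ are both bounded Borel functions. I expect no real obstacle here — the result is essentially bookkeeping on top of Lemma~\ref{resest} and the compactness of the resolvent — which is consistent with the remark in the excerpt that it was "essentially established" in \cite[Lemma 1.4]{ACS}.
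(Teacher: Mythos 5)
Your proposal is correct and follows exactly the paper's argument: the spectral-theorem bound $E_{H_0+W}(\delta)\leq(1+\max_{s\in\delta}|s|^2)\big(1+(H_0+W)^2\big)^{-1}$, then Lemma~\ref{resest}, then finite rank from domination by a compact operator. You merely spell out the last step (a projection dominated by a compact operator has finite rank) in more detail than the paper does.
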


\begin{proof}
From the spectral theorem we have
\[E_{H_0+W}(\delta)\leq\left(1+\max_{s\in\delta}|s|^2\right)\cdot\big(1+(H_0+W)^2\big)^{-1}.\]
Application of Lemma \ref{resest} gives \eqref{eq:Efinite}, which, in particular, implies that $E_{H_0+W}$ has finite rank.
\end{proof}

Note that for a compact subset $\delta$ of $\Reals$, $\Tr\big(E_H(\delta)\big)$ equals the number of eigenvalues of $H$, counting multiplicities, in the set $\delta$.

\begin{cor}
If $H_0=H_0^*$ satisfies $\big(1+H_0^2\big)^{-1/2}\in \fS^p$, with $p\geq 1$, and $W=W^*$ is bounded, then $\big(1+(H_0+W)^2\big)^{-1/2}\in \fS^p$.
\end{cor}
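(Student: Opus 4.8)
The plan is to deduce the claim directly from Lemma \ref{resest}, combined with two standard facts: operator monotonicity of the square root and monotonicity of singular values under the operator order. First note that $(1+H_0^2)^{-1}$ and, by Lemma \ref{compres}, $\big(1+(H_0+W)^2\big)^{-1}$ are bounded positive operators. By Lemma \ref{resest},
\[
0\leq\big(1+(H_0+W)^2\big)^{-1}\leq\left(1+\|W\|+\|W\|^2\right)(1+H_0^2)^{-1}.
\]
Since $t\mapsto t^{1/2}$ is operator monotone on $[0,\infty)$, applying it to this inequality yields
\[
\big(1+(H_0+W)^2\big)^{-1/2}\leq\left(1+\|W\|+\|W\|^2\right)^{1/2}(1+H_0^2)^{-1/2}.
\]

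Next I would invoke the fact that if $0\leq S\leq T$ are bounded positive operators, then their singular values satisfy $\mu_k(S)\leq\mu_k(T)$ for every $k$, which follows from the min-max characterization of $\mu_k$; consequently $T\in\fS^p$ implies $S\in\fS^p$ with $\norm{S}_p\leq\norm{T}_p$. Applying this with $S=\big(1+(H_0+W)^2\big)^{-1/2}$ and $T=\left(1+\|W\|+\|W\|^2\right)^{1/2}(1+H_0^2)^{-1/2}$, and using the hypothesis $(1+H_0^2)^{-1/2}\in\fS^p$, gives $\big(1+(H_0+W)^2\big)^{-1/2}\in\fS^p$ together with the explicit estimate
\[
\big\|\big(1+(H_0+W)^2\big)^{-1/2}\big\|_p\leq\left(1+\|W\|+\|W\|^2\right)^{1/2}\big\|(1+H_0^2)^{-1/2}\big\|_p.
\]

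There is no real obstacle here; the statement is essentially an immediate corollary of Lemma \ref{resest}. The only point requiring a word of care is that Lemma \ref{resest} is phrased as an inequality between resolvent-type operators that are a priori only densely defined as functions of unbounded operators, but since $1/(1+t^2)\leq 1$ on $\Reals$, both operators in question are in fact bounded, so the operator-monotone functional calculus and the singular value comparison apply without difficulty. One could also record, as in Corollary \ref{lemma:Efinite}, that the same argument localizes to give the stated norm bound uniformly in $W$ on bounded sets.
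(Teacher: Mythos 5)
Your proof is correct and takes essentially the same route as the paper, which deduces the claim in one line from Lemma \ref{resest} together with operator monotonicity. If anything, your version is slightly more careful: by applying the operator monotone function $t\mapsto t^{1/2}$ and then comparing singular values via the min-max principle, you avoid the paper's appeal to operator monotonicity of $t\mapsto t^{p/2}$, which as literally stated only holds for $p\le 2$, and you obtain the explicit norm bound as a bonus.
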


\begin{proof}
The result follows from Lemma \ref{resest} and operator monotonicity of the function $t\mapsto t^{p/2}$.
\end{proof}

Let $\fS^\alpha$ denote the Schatten ideal of order $\alpha$, that is,
\[\fS^\alpha=\left\{A\in\BH:\; \|A\|_\alpha:=\big(\Tr(|A|^\alpha)\big)^\frac{1}{\alpha}<\infty\right\}.\]

%Let $\mathcal{S}$ denote the Schwartz space of rapidly decreasing functions.
By standard properties of the Schatten norms, Corollary \ref{lemma:Efinite}, Theorem \ref{HSest}, and by the spectral theorem, we have the following lemma.
\begin{lemma}
\label{fH}
Let $H=H^*$ and let $f$ be a continuous compactly supported function on $\Reals$.
\begin{enumerate}[(i)]
\item
If $H$ has compact resolvent, then $f(H)\in \fS^1$ and
\begin{align}
\label{straight}
\|f(H)\|_1\leq \|f\|_\infty\cdot\Tr\big(E_H(\supp f)\big).
\end{align}
\item
Let $u(t)=(1+t^2)^{1/2}$.
If $(1+H^2)^{-1/2}\in \fS^2$, then %$(fu)(H)\in \fS^2$ and
\begin{align}
\label{straight2}
\|(fu)(H)\|_2\leq \|fu^2\|_\infty\cdot\big\|(1+H^2)^{-1/2}\big\|_2.
\end{align}
\end{enumerate}
\end{lemma}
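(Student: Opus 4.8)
The plan is to prove both parts of Lemma \ref{fH} by combining the spectral theorem with the finite-rank/Schatten estimates already in hand. For part (i), the key observation is that since $H$ has compact resolvent and $f$ is continuous with compact support, $f(H)$ is supported (in the sense of functional calculus) on the spectral subspace $E_H(\supp f)\mathcal{H}$, which by Corollary \ref{lemma:Efinite} is finite-dimensional. Concretely, $f(H)=f(H)E_H(\supp f)$, so $|f(H)|\le \|f\|_\infty E_H(\supp f)$ by the spectral theorem applied to $g\mapsto |g|$ on $\supp f$. Taking traces gives $\|f(H)\|_1=\Tr|f(H)|\le \|f\|_\infty\,\Tr\big(E_H(\supp f)\big)$, which is finite precisely because the projection has finite rank; this establishes \eqref{straight} and the membership $f(H)\in\fS^1$ simultaneously.

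For part (ii), I would write $(fu)(H)=(fu^2)(H)\cdot u(H)^{-1}$, noting that $u(t)^{-1}=(1+t^2)^{-1/2}$, so $u(H)^{-1}=(1+H^2)^{-1/2}\in\fS^2$ by hypothesis. Then apply the standard inequality $\|AB\|_2\le\|A\|_\infty\|B\|_2$ with $A=(fu^2)(H)$ and $B=(1+H^2)^{-1/2}$; here $\|(fu^2)(H)\|_\infty=\|fu^2\|_\infty$ since $fu^2$ is a bounded continuous function (it has compact support because $f$ does, and $u^2$ is continuous) and by the spectral theorem the operator norm of a function of a self-adjoint operator is the sup-norm of that function over the spectrum, bounded by $\|fu^2\|_\infty$. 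This gives \eqref{straight2} directly and shows $(fu)(H)\in\fS^2$.

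The only genuinely delicate point is making sure the factorizations and the operator monotonicity-type comparisons are literally valid on the range of the relevant spectral projection rather than on all of $\mathcal{H}$: for instance in (i), $f$ need not vanish outside a compact set in a way that makes $f(H)$ obviously trace class until one restricts attention to $E_H(\supp f)$, and $\Tr\big(E_H(\supp f)\big)<\infty$ is exactly the content of Corollary \ref{lemma:Efinite} (applied with $W=0$, or with the perturbed operator if $H=H_0+W$). In (ii), one should check that $(fu^2)(H)$ is a well-defined bounded operator, which again follows because $fu^2$ is bounded and continuous on $\Reals$. I do not expect a substantive obstacle here — the lemma is a bookkeeping consolidation of the preceding corollaries together with the elementary Schatten-norm inequality $\|AB\|_\alpha\le\|A\|_\infty\|B\|_\alpha$ — so the proof should be short.
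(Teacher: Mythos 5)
Your proof is correct and follows essentially the same route the paper indicates (the paper gives no written proof, only the remark that the lemma follows from standard Schatten-norm properties, Corollary \ref{lemma:Efinite}, and the spectral theorem): part (i) via $|f(H)|\le\|f\|_\infty E_H(\supp f)$ and the finite rank of that projection, part (ii) via the factorization $(fu)(H)=(fu^2)(H)(1+H^2)^{-1/2}$ and H\"older's inequality $\|AB\|_2\le\|A\|\,\|B\|_2$. No gaps.
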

\bigskip

\paragraph{\bf Operator derivatives}
Let $H_0=H_0^*$ be defined in $\mathcal{H}$ and let $V=V^*\in\BH$. Denote
\begin{align}
\label{eq:sumOp}
\mathcal{R}_{H_0,f,p}(V):=f(H_0+V)-\sum_{k=0}^{p-1}\frac{1}{k!}\cdot\frac{d^k}{ds^k}\bigg|_{s=0} f(H_0+sV),
\end{align}
provided the G\^{a}teaux derivatives exist in the operator norm. We will see in the proof of Theorem \ref{asexp} that $R_{H_0,f,p}(V)$ from \eqref{eq:sum} equals $\Tr\big(\mathcal{R}_{H_0,f,p}(V)\big)$.

Now we list results that guarantee the estimate
\begin{align*}
\|\mathcal{R}_{H_0,f,p}(V)\|=\mathcal{O}\big(\|V\|^p\big)
\end{align*}
for the operator norm of the remainder and help to establish the estimate \eqref{r1} for the trace of the remainder.

Recall that the divided difference of order $p$ is an operation on functions $f$ of one
(real) variable, which we will usually call $\la$, defined recursively as
follows:
\begin{align*}
&f^{[0]}(\la_0):=f(\la_0),\\
&f^{[p]}(\la_0,\ldots,\la_p):=
\begin{cases}
\frac{f^{[p-1]}(\la_0,\ldots,\la_{p-2},\la_{p-1})-
f^{[p-1]}(\la_0,\ldots,\la_{p-2},\la_p)}{\la_{p-1}-\la_p}&
\text{ if } \la_{p-1}\neq\la_p\\[2ex]
\frac{\partial}{\partial t}\big|_{t=\la_{p-1}}f^{[p-1]}(\la_0,\ldots,\la_{p-2},t)&
\text{ if } \la_{p-1}=\la_p.
\end{cases}
\end{align*}
Denote \[\mathcal{W}_p=\big\{f:\, f^{(j)},\widehat{f^{(j)}}\in L^1(\Reals), j=0,\ldots,p\big\}.\]
It is known (see, e.g., \cite[Lemma 2.3]{Azamov0}) that for $f\in\mathcal{W}_p$,
\begin{align*}
f^{[p]}(\la_0,\ldots,\la_p)=\int_{\Pi^{(p)}}e^{\i
(s_0-s_1)\la_0}e^{\i (s_1-s_2)\la_1}\dots e^{\i s_p \la_p}\,d\sigma_f^{(p)}(s_0,\dots,s_p),
\end{align*} where
\begin{align*}
&{\Pi^{(p)}}=\{(s_0,s_1,\dots,s_p)\in\Reals^{p+1}\st |s_p|\leq\dots\leq|s_1|\leq|s_0|,
\text{ \rm sign}(s_0)=\dots=\text{\rm sign}(s_p)\},\\
&d\sigma_f^{(p)}(s_0,s_1,\dots,s_p)=\i^p\hat f(s_0)\,ds_0\,ds_1\dots ds_p.
\end{align*}
Let $H_0=H_0^*,\ldots,H_p=H_p^*$  be defined in $\mathcal{H}$ and let $V_1,\ldots,V_p\in\BH$. If $f\in \mathcal{W}_p$, then the Bochner integral
\begin{align}
\label{trans}
T_{f^{[p]}}^{H_0,\ldots,H_p}(V_1,\ldots,V_p)y:=\int_{\Pi^{(p)}} e^{\i(s_0-s_1)H_0}V_1 e^{\i(s_1-s_2)H_1}V_2\ldots V_p\, e^{\i s_p H_p}y\,d\sigma_f^{(p)}(s_0,\dots,s_p)
\end{align}
exists for every $y\in\mathcal{H}$ and thus defined operator has the norm bound
\begin{align}
\label{nest}
\big\|T_{f^{[p]}}^{H_0,\ldots,H_p}(V_1,\ldots,V_p)\big\|\leq \frac{1}{p!}\cdot\big\|\widehat{f^{(p)}}\big\|_1\cdot\|V_1\|\cdot\ldots\cdot\|V_p\|
\end{align}
(see \cite[Lemma 4.5]{Azamov0}),
which follows from the bound for the total variation of the measure $\big\|\sigma_f^{(p)}\big\|\leq \frac{1}{p!}\big\|\widehat{f^{(p)}}\big\|_1$.

Similarly to \cite[Theorem 5.7]{Azamov0}, we have the following differentiation formula for an operator function.
\begin{thm}
\label{dermoi}
Let $H$ be a self-adjoint (unbounded) operator in $\mathcal{H}$, $V=V^*\in\BH$, $p\in\Nats$, and $f\in \mathcal{W}_p$.
Then,
\begin{align*}
\frac{1}{p!}\cdot\frac{d^p}{dt^p}\bigg|_{t=0} f(H+tV)=T_{f^{[p]}}^{H,\ldots,H}\underbrace{(V,\ldots,V)}_{p \text{ \rm times}}.
\end{align*}
\end{thm}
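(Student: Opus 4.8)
The plan is to induct on $p$, establishing that the $p$-th G\^{a}teaux derivative of $t\mapsto f(H+tV)$ at $t=0$ exists in operator norm and equals $p!\,T_{f^{[p]}}^{H,\ldots,H}(V,\ldots,V)$. For $p=1$ I would start from the integral representation $f(H+tV)=\int_{\Reals}\hat f(s)\,e^{\i s(H+tV)}\,ds$ (valid since $f\in\mathcal{W}_1\subset\mathcal{W}_0$, so $\hat f\in L^1$), differentiate under the integral sign, and use the Duhamel (Dyson) formula $\frac{d}{dt}e^{\i s(H+tV)}\big|_{t=0}=\i\int_{0}^{s}e^{\i(s-r)H}V e^{\i rH}\,dr$ together with the substitution that rewrites this double integral over $s\in\Reals$, $r\in[0,s]$ as an integral over $\Pi^{(1)}$ against $d\sigma_f^{(1)}$. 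Justifying the interchange of $\frac{d}{dt}$ and $\int\hat f(s)\,ds$ is legitimate because the difference quotients $\frac{1}{t}\big(e^{\i s(H+tV)}-e^{\i sH}\big)$ are uniformly bounded in $s$ by $|s|\,\|V\|$ and converge in norm, so dominated convergence applies once $s\hat f(s)=-\i\widehat{f'}(s)\in L^1$, which is exactly the content of $f\in\mathcal{W}_1$.

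For the inductive step I would assume the formula for order $p-1$ and compute the derivative of $t\mapsto T_{f^{[p-1]}}^{H,\ldots,H}(V,\ldots,V)$ evaluated at the perturbed operator, or more directly differentiate the Duhamel expansion one more time. The key algebraic identity is the standard recursion for the multiple operator integral: differentiating one more copy of a free semigroup factor $e^{\i(s_{j-1}-s_j)H}$ in the direction $V$ produces an extra perturbation slot and, after the change of variables carried out in \cite[Theorem 5.7]{Azamov0}, turns $d\sigma_f^{(p-1)}$ into $d\sigma_f^{(p)}$ on $\Pi^{(p)}$; this is precisely the manifestation at the level of measures of the recursive definition of divided differences. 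The norm bound \eqref{nest} guarantees all the Bochner integrals in sight exist and that the difference quotients are dominated, so every differentiation under the integral sign is justified; here the hypothesis $f\in\mathcal{W}_p$ (hence $\widehat{f^{(j)}}\in L^1$ for all $j\le p$) is used to control successively higher moments.

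The main obstacle is the bookkeeping in the change of variables on $\Pi^{(p)}$: one must verify that after differentiating, splitting the new time interval, and relabeling the simplex coordinates, the resulting integrand over $\Pi^{(p)}$ carries exactly the exponential weights $e^{\i(s_0-s_1)H}\cdots e^{\i s_p H}$ against $\i^p\hat f(s_0)\,ds_0\cdots ds_p$, with the combinatorial factor working out to $1/p!$. This is essentially the computation of \cite[Theorem 5.7]{Azamov0}, and since the statement here asserts it \emph{similarly to} that reference, I would carry it out in the same fashion, the only new point being that $H$ is allowed to be unbounded---which causes no difficulty because every operator that acts is a bounded unitary $e^{\i rH}$ or the bounded perturbation $V$, and $f(H+tV)$, $f(H)$ are bounded by $\|f\|_\infty$ via the functional calculus recalled above.
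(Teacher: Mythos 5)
Your proposal is correct and matches the paper's treatment: the paper gives no independent proof of Theorem \ref{dermoi}, stating only that it holds ``similarly to \cite[Theorem 5.7]{Azamov0}'', and your outline (Fourier representation of $f(H+tV)$, Duhamel formula, domination of difference quotients using $\widehat{f^{(j)}}\in L^1$, and the simplex change of variables producing $\sigma_f^{(p)}$ on $\Pi^{(p)}$ with the $1/p!$ factor) is precisely the argument of that reference. No discrepancy to report.
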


We will work with the subspace $G_p$ of $C^{p+1}(\Reals)$, $p\in\Nats$, consisting of functions $f$ such that $f^{(p)}, f^{(p+1)}\in L^2(\Reals)$. Let $\|\cdot\|_{G_p}$ denote the semi-norm
\[\|f\|_{G_p}=\frac{\sqrt{2}}{p!}\big(\big\|f^{(p)}\big\|_2+\big\|f^{(p+1)}\big\|_2\big).\]
It is known that $\frac{1}{p!}\big\|\widehat{f^{(p)}}\big\|_1\leq\|f\|_{G_p}$ (see, e.g., \cite[Lemma~7]{PS-Crelle}). In particular, we have $C_c^{p+1}(\Reals)\subset \cap_{j=0}^p G_j\subset \mathcal{W}_p$. Since all the scalar functions we consider are defined on $\Reals$, we will use the shortcut $C_c^{p+1}:=C_c^{p+1}(\Reals)$.

We will need the following version of the well known integral representation for the remainder of the Taylor approximation.
\begin{thm}
\label{integral}
If $f\in \cap_{j=0}^p G_j$, $H_0=H_0^*$ is defined in $\mathcal{H}$, and $V=V^*\in\BH$, then
\begin{align}
\label{eq:integral}
\mathcal{R}_{H_0,f,p}(V)=\frac{1}{(p-1)!}\int_0^1 (1-t)^{p-1}\frac{d^p}{ds^p}\bigg|_{s=t}f(H_0+sV)\,dt,
\end{align}
where the integral is defined for every $y\in\BH$ by
\begin{align*}
\left(\int_0^1 (1-t)^{p-1}\frac{d^p}{ds^p}\bigg|_{s=t}f(H_0+sV)\,dt\right) y
=\int_0^1 (1-t)^{p-1}\frac{d^p}{ds^p}\bigg|_{s=t}f(H_0+sV)\,y\,dt.
\end{align*}
\end{thm}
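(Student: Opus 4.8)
The plan is to pair the operator identity with an arbitrary fixed vector $y\in\mathcal{H}$ and reduce to the classical Taylor formula with integral remainder for an $\mathcal{H}$-valued function. I would set $\Phi(t):=f(H_0+tV)y$ for $t\in\Reals$. Each $H_0+tV$ is self-adjoint on $\domain{H_0}$, and since $\cap_{j=0}^{p}G_j\subset\mathcal{W}_p\subset\mathcal{W}_k$ for $0\le k\le p$, Theorem~\ref{dermoi} — applied with $H_0+tV$ in place of $H$ and combined with the translation $H_0+sV=(H_0+tV)+(s-t)V$ — shows that for every $t\in\Reals$ and every $0\le k\le p$ the derivative
\[
\frac{d^k}{ds^k}\bigg|_{s=t}f(H_0+sV)=k!\,T_{f^{[k]}}^{H_0+tV,\ldots,H_0+tV}\underbrace{(V,\ldots,V)}_{k \text{ \rm times}}=:\Psi_k(t)
\]
exists in the operator norm; in particular $\Phi$ is $p$ times differentiable on $\Reals$ with $\Phi^{(k)}(t)=\Psi_k(t)y$. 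It then remains to verify that each map $t\mapsto\Psi_k(t)y$ is continuous from $\Reals$ into $\mathcal{H}$ and to invoke Taylor's formula on $[0,1]$.

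For the continuity step I would use the Bochner integral representation \eqref{trans} with $H_0=\dots=H_k=H_0+tV$, namely
\[
\Psi_k(t)y=k!\int_{\Pi^{(k)}}e^{\i(s_0-s_1)(H_0+tV)}V\cdots V\,e^{\i s_k(H_0+tV)}y\,d\sigma_f^{(k)}(s_0,\dots,s_k).
\]
By the second resolvent identity, $\big\|(H_0+tV-z)^{-1}-(H_0+t'V-z)^{-1}\big\|\le|t-t'|\,\|V\|\,|\im z|^{-2}$ whenever $\im z\ne0$, so $t\mapsto H_0+tV$ is continuous in the norm resolvent sense; hence $t\mapsto e^{\i u(H_0+tV)}y'$ is continuous in $\mathcal{H}$ for every $u\in\Reals$ and $y'\in\mathcal{H}$, and therefore the integrand above is $\mathcal{H}$-continuous in $t$ for each fixed $(s_0,\dots,s_k)$. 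Since it is dominated by $\|V\|^k\|y\|$ and the total variation $\|\sigma_f^{(k)}\|\le\frac{1}{k!}\|\widehat{f^{(k)}}\|_1$ is finite, dominated convergence for Bochner integrals yields the desired continuity; the bound \eqref{nest} moreover gives $\sup_{t\in\Reals}\|\Psi_k(t)\|\le\|\widehat{f^{(k)}}\|_1\,\|V\|^k<\infty$.

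With $\Phi\in C^p\big([0,1],\mathcal{H}\big)$ established (one-sided derivatives at the endpoints), I would apply the Taylor formula with integral remainder
\[
\Phi(1)=\sum_{k=0}^{p-1}\frac{1}{k!}\,\Phi^{(k)}(0)+\frac{1}{(p-1)!}\int_0^1(1-t)^{p-1}\Phi^{(p)}(t)\,dt,
\]
which holds for continuously $p$-times differentiable $\mathcal{H}$-valued functions by the same induction on $p$ as in the scalar case: the case $p=1$ is the fundamental theorem of calculus for $\mathcal{H}$-valued functions, and the induction step is an integration by parts in $t$, the boundary term at $t=1$ vanishing while the one at $t=0$ contributes $\frac{1}{p!}\Phi^{(p)}(0)$. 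Substituting $\Phi^{(k)}(0)=\frac{d^k}{ds^k}\big|_{s=0}f(H_0+sV)\,y$ and $\Phi^{(p)}(t)=\frac{d^p}{ds^p}\big|_{s=t}f(H_0+sV)\,y$, and noting that $\Phi(1)-\sum_{k=0}^{p-1}\frac{1}{k!}\Phi^{(k)}(0)=\mathcal{R}_{H_0,f,p}(V)\,y$ by \eqref{eq:sumOp}, would yield \eqref{eq:integral} evaluated at the arbitrary vector $y$, which is the assertion. I expect the only genuinely non-formal point to be the strong continuity of $t\mapsto\Psi_p(t)y$ — that is, interchanging the $t$-limit with the Bochner integral in \eqref{trans} — and this is exactly where the norm-resolvent continuity of $t\mapsto H_0+tV$ and the uniform bound from \eqref{nest} enter; the rest is the standard reduction to the scalar Taylor theorem.
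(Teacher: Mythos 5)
Your argument is correct and follows essentially the same route as the paper: both establish strong continuity of $t\mapsto \frac{d^p}{ds^p}\big|_{s=t}f(H_0+sV)$ via the Bochner integral representation \eqref{trans} together with dominated convergence, and then reduce to the classical Taylor formula with integral remainder by testing against vectors. The only cosmetic difference is that the paper pairs with two vectors $y,g$ to reduce to scalar-valued functions, whereas you work directly with the $\mathcal{H}$-valued function $t\mapsto f(H_0+tV)y$; this changes nothing of substance.
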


\begin{proof}
By Theorem \ref{dermoi}, the definition \eqref{trans}, the continuity of the function $t\mapsto e^{\i s(H+tV)}$, $s\in\Reals$, in the strong operator topology, Theorem \ref{moib}, and the Lebesgue dominated convergence for Bochner integrals, the function $t\mapsto \frac{d^n}{ds^n}\big|_{s=t}f(H_0+sV)$ is continuous in the strong operator topology. (More details in a slightly modified setting can be found in the proof of Theorem \ref{asexp} below.) For any $y,g\in\mathcal{H}$, define $\phi_{y,g}\in(\BH)^*$ by $\phi_{y,g}(A)=\left<Ay,g\right>$, for all $A\in\BH$. The function
\[t\mapsto \phi_{y,g}\left(\frac{d^n}{ds^n}\bigg|_{s=t}f(H_0+sV)\right)=\frac{d^n}{ds^n}\bigg|_{s=t}\phi_{y,g}\big(f(H_0+sV)\big)\]
is continuous. By the fundamental theorem of calculus and integration by parts,
\begin{align*}
\left<\mathcal{R}_{H_0,f,n}y,g\right>&=\frac{1}{(p-1)!}\int_0^1 (1-t)^{p-1}\frac{d^p}{ds^p}\bigg|_{s=t}\phi_{y,g}\big(f(H_0+sV)\big)\,dt\\
&=\left<\frac{1}{(p-1)!}\int_0^1 (1-t)^{p-1}\frac{d^p}{ds^p}\bigg|_{s=t}f(H_0+sV)y\,dt,g\right>,\quad
\text{for all } y,g\in\mathcal{H},
\end{align*}
completing the proof.
\end{proof}

The inequality \eqref{nest} has analogs for Schatten norms, as it is stated in \eqref{Spest} of the theorem below.
%As a consequence of \cite[Lemmas 3.10 and 4.5, Theorem 5.7]{Azamov0}
%we have the following properties of the transformation \eqref{trans}.

\begin{thm}
\label{moib}
Let $H=H^*$ be defined in $\mathcal{H}$, $p\in\Nats$, and $f\in \cap_{k=1}^p G_k$.
Let $\alpha,\alpha_1,\ldots,\alpha_p\in [1,\infty]$ be such that
$\frac{1}{\alpha}=\frac{1}{\alpha_1}+\ldots
+\frac{1}{\alpha_p}$ and let $V_j\in \fS^{\alpha_j}$, $j=1,\ldots,p$. Then,\footnote{$\|\cdot\|_\infty$ denotes the operator norm.}
\begin{align}
\label{Spest}
\big\|T_{f^{[p]}}^{H,\ldots,H}(V_1,\ldots,V_p)\big\|_\alpha\leq \|f\|_{G_p}\cdot\|V_1\|_{\alpha_1}\cdot\ldots\cdot\|V_p\|_{\alpha_p}.
\end{align}
\end{thm}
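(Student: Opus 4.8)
The plan is to mimic the derivation of the operator-norm bound \eqref{nest}, but to estimate the Bochner integral \eqref{trans} in the Schatten norm $\|\cdot\|_\alpha$, replacing submultiplicativity of the operator norm by the generalized H\"older inequality for Schatten classes, $\|X_1\cdots X_N\|_\alpha\le\|X_1\|_{\beta_1}\cdots\|X_N\|_{\beta_N}$ whenever $\frac1\alpha=\frac1{\beta_1}+\cdots+\frac1{\beta_N}$ (which remains valid when some $\beta_i=\infty$, the corresponding factors then being merely bounded). First I would dispose of the case $\alpha=\infty$: then $\frac1{\alpha_1}+\cdots+\frac1{\alpha_p}=0$ forces $\alpha_1=\cdots=\alpha_p=\infty$, and \eqref{Spest} reduces to \eqref{nest} together with the already-recorded inequality $\frac1{p!}\big\|\widehat{f^{(p)}}\big\|_1\le\|f\|_{G_p}$. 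So from now on assume $1\le\alpha<\infty$; then at least one $\alpha_j$ is finite.

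Write the integrand of \eqref{trans} as $\Phi(s_0,\dots,s_p)=e^{\i(s_0-s_1)H}V_1\,e^{\i(s_1-s_2)H}V_2\cdots V_p\,e^{\i s_p H}$. Applying the generalized H\"older inequality to its $2p+1$ factors with exponents $\infty,\alpha_1,\infty,\alpha_2,\dots,\alpha_p,\infty$, and using that multiplication by a unitary leaves every Schatten norm unchanged, one gets $\|\Phi(s_0,\dots,s_p)\|_\alpha\le\|V_1\|_{\alpha_1}\cdots\|V_p\|_{\alpha_p}$, uniformly on $\Pi^{(p)}$. The step that requires real work is to show that $\Phi$ is continuous on $\Pi^{(p)}$ with respect to $\|\cdot\|_\alpha$, not merely in the strong operator topology (which is all one gets for free). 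I would deduce this from the elementary fact that if $W\in\fS^\beta$ with $\beta<\infty$ and $t\mapsto U_t$ is a family of unitaries such that $t\mapsto U_t$ and $t\mapsto U_t^*$ are strongly continuous, then $t\mapsto U_tW$ and $t\mapsto WU_t$ are $\|\cdot\|_\beta$-continuous; this in turn follows by approximating $W$ in $\|\cdot\|_\beta$ by finite-rank operators and using strong continuity on the finitely many rank-one summands. Since at least one factor of $\Phi$ belongs to a Schatten class of finite exponent, one then passes limits through the product $\Phi$ using the H\"older inequality, the factors with $\alpha_j=\infty$ (only strongly continuous) being absorbed by the same finite-rank approximation. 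This $\|\cdot\|_\alpha$-continuity (equivalently, Bochner measurability in $\fS^\alpha$) of the integrand is the main obstacle; the rest is bookkeeping.

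Once $\Phi$ is $\|\cdot\|_\alpha$-continuous on $\Pi^{(p)}$, it is Bochner integrable against the finite measure $\sigma_f^{(p)}$, so $S:=\int_{\Pi^{(p)}}\Phi\,d\sigma_f^{(p)}$ defines an element of $\fS^\alpha$ satisfying
\begin{align*}
\|S\|_\alpha&\le\big\|\sigma_f^{(p)}\big\|\cdot\sup_{\Pi^{(p)}}\|\Phi\|_\alpha\le\big\|\sigma_f^{(p)}\big\|\cdot\|V_1\|_{\alpha_1}\cdots\|V_p\|_{\alpha_p}\\
&\le\|f\|_{G_p}\cdot\|V_1\|_{\alpha_1}\cdots\|V_p\|_{\alpha_p},
\end{align*}
where the final step uses the total-variation bound $\big\|\sigma_f^{(p)}\big\|\le\frac1{p!}\big\|\widehat{f^{(p)}}\big\|_1\le\|f\|_{G_p}$. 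It remains to identify $S$ with $T_{f^{[p]}}^{H,\dots,H}(V_1,\dots,V_p)$: for each fixed $y\in\mathcal{H}$, the evaluation map $\fS^\alpha\ni A\mapsto Ay\in\mathcal{H}$ is bounded and linear, hence commutes with the Bochner integral, so that $Sy=\int_{\Pi^{(p)}}\Phi(s_0,\dots,s_p)\,y\,d\sigma_f^{(p)}=T_{f^{[p]}}^{H,\dots,H}(V_1,\dots,V_p)\,y$ by the definition \eqref{trans}. Therefore $S=T_{f^{[p]}}^{H,\dots,H}(V_1,\dots,V_p)$ and \eqref{Spest} follows.
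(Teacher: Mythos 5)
The paper offers no proof of Theorem \ref{moib} at all: the bound with the constant $\|f\|_{G_p}$ is imported from the literature (\cite[Lemma 7]{PS-Crelle}, \cite[Lemma 3.5]{PSS}), where it is proved for the discretized multiple operator integral $\hat T_{f^{[p]}}$ of \eqref{TRmoi} by a separation-of-variables argument that does not pass through the total variation of $\sigma_f^{(p)}$. Your route is therefore genuinely different, and for the Bochner integral \eqref{trans} it is sound: the Schatten--H\"older estimate of the integrand (unitaries carrying exponent $\infty$), the $\|\cdot\|_\alpha$-continuity via finite-rank approximation (you correctly note that strong continuity of the adjoints is needed for the factors sitting to the left of a Schatten-class $V_j$), and the identification of the $\fS^\alpha$-valued integral with the pointwise-defined operator through evaluation functionals are all correct. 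In fact you obtain the slightly sharper constant $\frac{1}{p!}\big\|\widehat{f^{(p)}}\big\|_1\leq\|f\|_{G_p}$, exactly as \eqref{nest} is sharper than its $G_p$-weakening.

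The one genuine gap concerns the class of functions. Your argument lives entirely on the representation \eqref{trans}, which requires $\hat f$ to be an honest (locally integrable) function with $|s|^p\hat f(s)\in L^1$ --- essentially $f\in\mathcal{W}_p$. The hypothesis of the theorem is only $f\in\cap_{k=1}^p G_k$, which controls $f^{(1)},\dots,f^{(p+1)}$ in $L^2$ but imposes no decay on $f$ itself, and the paper really does invoke the theorem for such functions: in Lemma \ref{CL2} it is applied to $u(t)=(1+t^2)^{1/2}$ and its divided differences, for which $\hat u$ is only a finite-part distribution and $\sigma_u^{(p)}$ is not a finite measure, so \eqref{trans} is unavailable and $T_{u^{[k]}}$ must be read as $\hat T_{u^{[k]}}$ from \eqref{TRmoi}. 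For that case the constant $\|f\|_{G_p}$ genuinely requires the machinery of \cite{PSS} and \cite{PS-Crelle}. So your proof covers $f\in\mathcal{W}_p\cap G_p$ --- in particular all $f\in C_c^{p+1}$, which is what Lemma \ref{constants} and Theorem \ref{derbound} need --- but not the full statement in the generality in which the paper later uses it; you should either add the hypothesis $f\in\mathcal{W}_p$ or supply a separate argument for the discretized operator.
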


In the particular case of $p=1$ and $V$ a Hilbert-Schmidt perturbation, we have a stronger estimate, which holds for a more general $T_{f^{[1]}}^{H,H}(V)$ than the one defined above.
Let $\phi$ be a bounded continuous function on $\Reals^2$ and let
\begin{align}
\label{TR}
&\hat T_\phi^{H,H}(V)\\
\nonumber
&:=\|\cdot\|_2\,\text{-}\!\lim_{m\rightarrow\infty}\;\lim_{N\rightarrow\infty}\sum_{|l_0|,|l_1|\leq N}
\phi\left(\frac{l_0}{m},\frac{l_1}{m}\right)E_H\left(\left[\frac{l_0}{m},\frac{l_0+1}{m}\right)\right)
VE_H\left(\left[\frac{l_1}{m},\frac{l_1+1}{m}\right)\right).
\end{align}
The iterated limit above exists and defines a bounded operator on $\fS^2$ (with the bound as in the theorem below). The proof can be found on pp. 5--6 of \cite{HS-compatible} or, for a slightly different construction and more general $\phi$, in \cite{BS}.
\begin{thm}
\label{HSest}
Let $H$ be a self-adjoint (unbounded) operator in $\mathcal{H}$ and let $V\in \fS^2$. Then, for $\phi\in C_b(\Reals^2)$,
\[\big\|\hat T_{\phi}^{H,H}(V)\big\|_2\leq \|\phi\|_\infty\cdot\|V\|_2.\]
In particular, for $f\in C^1(\Reals)$, with $f'\in L^\infty(\Reals)$,
\[\big\|\hat T_{f^{[1]}}^{H,H}(V)\big\|_2\leq \|f'\|_\infty\cdot\|V\|_2.\]
\end{thm}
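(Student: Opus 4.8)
The plan is to identify the Hilbert--Schmidt double operator integral $\hat T_\phi^{H,H}$ with the functional calculus of a commuting pair of self-adjoint operators acting on the Hilbert space $\fS^2$; once this is done, the asserted bound becomes the elementary estimate $\big\|\int\phi\,d\mathcal{E}\big\|\leq\|\phi\|_\infty$ for a projection-valued measure $\mathcal{E}$, and as a byproduct the argument re-derives the existence of the iterated limit in \eqref{TR}.

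First I would build a spectral measure on $\fS^2$ out of $E_H$. For Borel sets $\sigma,\tau\subseteq\Reals$ put $\mathcal{E}(\sigma\times\tau)\colon\fS^2\to\fS^2$, $\mathcal{E}(\sigma\times\tau)(V)=E_H(\sigma)VE_H(\tau)$. With respect to the inner product $\langle A,B\rangle_2=\Tr(B^*A)$ one checks directly that each $\mathcal{E}(\sigma\times\tau)$ is an orthogonal projection, that $\mathcal{E}$ is additive in each of the two coordinates of a rectangle, and that it is countably additive in the $\fS^2$-norm topology --- the tail estimate $\|E_H(\sigma\setminus\bigcup_{n\leq N}\sigma_n)V\|_2^2=\Tr\big(V^*E_H(\sigma\setminus\bigcup_{n\leq N}\sigma_n)V\big)\to0$ for $V\in\fS^2$ is dominated convergence for the trace. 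The cleanest way to see that $\mathcal{E}$ extends to a genuine Borel projection-valued measure on $\Reals^2$ is to note that the canonical unitary $\mathcal{H}\otimes\overline{\mathcal{H}}\to\fS^2$, $\xi\otimes\overline{\eta}\mapsto\langle\,\cdot\,,\eta\rangle\xi$, intertwines $\mathcal{E}$ with the product $E_H\otimes\overline{E_H}$ of the spectral measure $E_H$ with its conjugate. Since $E_H(\Reals)=I$ we get $\mathcal{E}(\Reals^2)=\mathrm{id}_{\fS^2}$, so $\mu_V:=\langle\mathcal{E}(\cdot)V,V\rangle_2$ is a finite positive Borel measure on $\Reals^2$ of total mass $\|V\|_2^2$.

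Next, for $\phi\in C_b(\Reals^2)$ I would form the bounded normal operator $\int_{\Reals^2}\phi\,d\mathcal{E}$ on $\fS^2$ by the usual spectral calculus; it satisfies $\big\|\int_{\Reals^2}\phi\,d\mathcal{E}\big\|\leq\|\phi\|_\infty$ and $\big\|\big(\int_{\Reals^2}g\,d\mathcal{E}\big)(V)\big\|_2^2=\int_{\Reals^2}|g|^2\,d\mu_V$ for every bounded Borel $g$. It then remains to identify $\hat T_\phi^{H,H}(V)$ with $\big(\int_{\Reals^2}\phi\,d\mathcal{E}\big)(V)$. Writing $\Delta^m_l=[l/m,(l+1)/m)$, the inner sum in \eqref{TR} equals $\big(\int\psi_{m,N}\,d\mathcal{E}\big)(V)$, where $\psi_{m,N}$ is the level-$N$ truncation of the simple function $\phi_m=\sum_{l_0,l_1\in\Z}\phi(l_0/m,l_1/m)\,\mathbf{1}_{\Delta^m_{l_0}\times\Delta^m_{l_1}}$. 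As $N\to\infty$ one has $\psi_{m,N}\to\phi_m$ pointwise with $|\psi_{m,N}|\leq\|\phi\|_\infty$, and as $m\to\infty$ one has $\phi_m\to\phi$ pointwise on $\Reals^2$ by continuity of $\phi$, again with $|\phi_m|\leq\|\phi\|_\infty$. Two applications of dominated convergence against the finite measure $\mu_V$ then give first $\big(\int\psi_{m,N}\,d\mathcal{E}\big)(V)\to\big(\int\phi_m\,d\mathcal{E}\big)(V)$ and then $\big(\int\phi_m\,d\mathcal{E}\big)(V)\to\big(\int\phi\,d\mathcal{E}\big)(V)$ in $\fS^2$, so the iterated limit defining $\hat T_\phi^{H,H}(V)$ exists, equals $\big(\int_{\Reals^2}\phi\,d\mathcal{E}\big)(V)$, and hence $\|\hat T_\phi^{H,H}(V)\|_2\leq\|\phi\|_\infty\|V\|_2$. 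The only genuine obstacle here is the bookkeeping in this identification step --- the iterated rather than joint limit, and the non-compact support of $\phi$ --- but both are absorbed by dominated convergence precisely because $\mu_V$ is a finite measure.

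Finally, for the ``in particular'' statement I would note that for $f\in C^1(\Reals)$ with $f'\in L^\infty(\Reals)$ the divided difference admits the representation $f^{[1]}(\la,\mu)=\int_0^1 f'\big(\mu+t(\la-\mu)\big)\,dt$, which exhibits $f^{[1]}$ as a continuous function on all of $\Reals^2$ (the diagonal included) that is bounded by $\|f'\|_\infty$; thus $f^{[1]}\in C_b(\Reals^2)$ with $\|f^{[1]}\|_\infty\leq\|f'\|_\infty$, and applying the first part with $\phi=f^{[1]}$ yields the claimed estimate.
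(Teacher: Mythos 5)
Your proof is correct, and it is essentially the standard Birman--Solomyak argument: the paper itself offers no in-text proof of this theorem but defers to \cite{HS-compatible} and \cite{BS}, and realizing $\hat T_\phi^{H,H}$ as the functional calculus of the product spectral measure $E_H\otimes\overline{E_H}$ on $\fS^2\cong\mathcal{H}\otimes\overline{\mathcal{H}}$, then passing to the iterated limit in \eqref{TR} by dominated convergence against the finite measure $\mu_V$, is precisely the route taken there. The reduction of the second assertion to the first via $f^{[1]}(\la,\mu)=\int_0^1 f'\big(\mu+t(\la-\mu)\big)\,dt$ is likewise the standard one and is handled correctly.
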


If $f\in\mathcal{W}_1$, then $\hat T_{f^{[1]}}^{H,H}(V)=T_{f^{[1]}}^{H,H}(V)$ by \cite[Lemma 3.5]{PSS}.

It is easy to see that if $V$ is a trace-class operator on $\Hcal$ and $E$ is a spectral measure (of a self-adjoint operator) acting on $\Hcal$, then the measure $\Tr\big(E(\cdot)V\big)$ has finite total variation. It is also known (see, e.g., \cite[Section 4]{ds} for references and details) that for $V_1,\ldots,V_p \in \fS^2$ and $E_1,\ldots,E_p$ spectral measures, with $p\geq 2$, the set function
\[A_1\times\cdots\times A_p\mapsto \Tr\big(E_1(A_1)V_1\ldots E_p(A_p)V_p\big),\]
where $A_1,\ldots,A_p$ are Borel subsets of $\Reals$, uniquely extends to a measure on $\Reals^p$ of finite total variation. These observations are core for the following useful representations for operator derivatives.

\begin{thm}
\label{moimeasure}
Let $H$ be a self-adjoint operator, $p\in\Nats$, and let $f\in \cap_{k=1}^p G_k$.
\begin{enumerate}[(i)]
\item
If $p=1$ and $V\in \fS^1$, then
\begin{align*}
\Tr\left(\frac{d}{dt}\bigg|_{t=0}f(H+tV)\right)=\int_\Reals f'(\la)\,\Tr\big(E_H(d\la)V\big).
\end{align*}
\item $($\cite[Theorem 3.12]{moissf}$)$
If $p\geq 2$ and $V\in \fS^2$, then
\begin{align*}
&\Tr\left(\frac{d^p}{dt^p}\bigg|_{t=0}f(H+tV)\right)\\
&\quad=(p-1)!\int_{\Reals^p}(f')^{[p-1]}(\la_1,\ldots,\la_p)\,\Tr\big(E_H(d\la_1)V\ldots E_H(d\la_p)V \big).
%\Tr\big(T_{f^{[p]}}^{H,\ldots,H}(V_1,\ldots,V_p)\big)=\int_{\Reals^{p+1}}f^{[p]}(\la_0,\ldots,\la_{p+1})\,
%\Tr\big(E_H(d\la_0)V_1\ldots V_p E_H(d\la_{p+1})\big).
\end{align*}
\end{enumerate}
\end{thm}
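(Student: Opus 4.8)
Part (ii) is quoted from \cite[Theorem 3.12]{moissf}, so there is nothing to prove there; I concentrate on part (i). The plan is to identify the first G\^ateaux derivative with a double operator integral and then compute its trace by cyclicity of the trace and Fubini's theorem. By Theorem \ref{dermoi} one has $\frac{d}{dt}\big|_{t=0}f(H+tV)=T_{f^{[1]}}^{H,H}(V)$, realized as the Bochner integral \eqref{trans} with $p=1$; I would note here that for $f\in G_1$ one has $f',f''\in L^2$, hence $(1+|\cdot|)\widehat{f'}\in L^2$, and so $\widehat{f'}\in L^1$ by Cauchy--Schwarz, which is exactly what makes $\sigma_f^{(1)}$ a measure of finite total variation and \eqref{trans} a genuine integral. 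Since $V\in\fS^1$ and the exponentials in \eqref{trans} are unitary, the integrand $s\mapsto e^{\i(s_0-s_1)H}Ve^{\i s_1H}$ is $\fS^1$-valued and Bochner integrable with constant $\fS^1$-norm $\|V\|_1$; hence $T_{f^{[1]}}^{H,H}(V)\in\fS^1$ (consistently with Theorem \ref{moib} for $\alpha=\alpha_1=1$), and $\Tr$, being bounded on $\fS^1$, may be moved inside the integral.

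Inside the integral I would use cyclicity of the trace, $\Tr\big(e^{\i(s_0-s_1)H}Ve^{\i s_1H}\big)=\Tr\big(e^{\i s_0H}V\big)$, together with the spectral theorem, $\Tr\big(e^{\i s_0H}V\big)=\int_\Reals e^{\i s_0\la}\,\mu_V(d\la)$, where $\mu_V:=\Tr\big(E_H(\cdot)V\big)$ is the finite complex measure recalled just before the theorem. This gives
\begin{align*}
\Tr\Big(\frac{d}{dt}\Big|_{t=0}f(H+tV)\Big)=\int_{\Pi^{(1)}}\Big(\int_\Reals e^{\i s_0\la}\,\mu_V(d\la)\Big)\,d\sigma_f^{(1)}(s_0,s_1).
\end{align*}
Because $\|\mu_V\|\le\|V\|_1<\infty$ and $\sigma_f^{(1)}$ has finite total variation, Fubini's theorem permits interchanging the two integrations; writing $e^{\i s_0\la}=e^{\i(s_0-s_1)\la}e^{\i s_1\la}$, the resulting inner integral over $\Pi^{(1)}$ is precisely $f^{[1]}(\la,\la)=f'(\la)$ by the integral representation of the divided difference recalled in the Preliminaries. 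Hence $\Tr\big(\frac{d}{dt}\big|_{t=0}f(H+tV)\big)=\int_\Reals f'(\la)\,\mu_V(d\la)$, which is the assertion.

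The computation above is short, so the points that require genuine care are the ones already flagged: that the first G\^ateaux derivative is really trace class and correctly identified with $T_{f^{[1]}}^{H,H}(V)$ for every $f\in G_1$ (Theorem \ref{dermoi} is literally stated for $f\in\mathcal{W}_1$), and the two interchanges of $\Tr$ with an integral. I expect the first to be the main obstacle; it can be dealt with by the observation that $\widehat{f'}\in L^1$ already for $f\in G_1$, or by approximating $f$ in $\|\cdot\|_{G_1}$ by nicer functions and invoking the continuity estimates of Theorems \ref{moib} and \ref{HSest}, noting that $\|f_n-f\|_{G_1}\to0$ forces $\|f_n'-f'\|_\infty\to0$ in one dimension, hence $\|f_n^{[1]}-f^{[1]}\|_\infty\to0$, which controls the passage to the limit on both sides. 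An equivalent and perhaps more transparent route avoids \eqref{trans} and works from the defining sums of $\hat T_{f^{[1]}}^{H,H}(V)$ in \eqref{TR}: cyclicity gives $\Tr\big(E_H(A)VE_H(B)\big)=\Tr\big(E_H(A\cap B)V\big)$, so only the diagonal blocks survive, $f^{[1]}$ on the diagonal equals $f'$, and one is left with sums $\sum_l f'(l/m)\,\mu_V\big([\tfrac{l}{m},\tfrac{l+1}{m})\big)$ converging to $\int_\Reals f'\,d\mu_V$ by dominated convergence; the remaining subtlety, exchanging $\Tr$ with the iterated $\fS^2$-limit, again reduces to the same trace-class bookkeeping.
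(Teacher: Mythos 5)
The paper offers no proof of this theorem: part (ii) is quoted verbatim from \cite[Theorem 3.12]{moissf}, and part (i) is left implicit, resting on the observation made just before the statement that $\Tr\big(E_H(\cdot)V\big)$ has finite total variation for $V\in\fS^1$. Your argument for (i) is correct and supplies exactly the missing computation in the natural way: identify the derivative with $T_{f^{[1]}}^{H,H}(V)$, note the integrand of \eqref{trans} is $\fS^1$-valued with constant norm $\|V\|_1$ so the trace passes inside the Bochner integral, then use cyclicity, the spectral theorem, Fubini for two finite measures, and the identity $f^{[1]}(\la,\la)=f'(\la)$. You also correctly isolate the one genuine subtlety, namely that the hypothesis is $f\in G_1$ while Theorem \ref{dermoi} and the representation \eqref{trans} are stated for $f\in\mathcal{W}_1$ (a function in $G_1$ need not have $f,\hat f\in L^1$; e.g.\ $f\equiv 1$); your approximation route via $\|\cdot\|_{G_1}$, using the continuity estimates $\big\|T_{f^{[1]}}^{H,H}(V)\big\|_1\leq\|f\|_{G_1}\|V\|_1$ and $\|f_n'-f'\|_\infty\to 0$, closes this cleanly, and in any case the paper only ever invokes the theorem for $f\in C_c^{n+1}$, where no extension is needed.
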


It was proved in \cite[Lemma 3.5]{PSS} that for $f\in\mathcal{W}_p$ and $V_j\in \fS^{\alpha_j}$, $j=1,\ldots,p$, with $\alpha_1,\ldots,\alpha_p\in [1,\infty]$, the operator $T_{f^{[n]}}^{H,\ldots,H}(V_1,\ldots,V_p)$ given by \eqref{trans} coincides with the operator
\begin{align}
\label{TRmoi}
&\hat T_\phi^{H,\ldots,H}(V_1,\ldots,V_p)\\
\nonumber
&:=s\,\text{-}\!\lim_{m\rightarrow\infty}\;\|\cdot\|_\alpha\,\text{-}\!\lim_{N\rightarrow\infty}\sum_{|l_0|,\ldots,|l_p|\leq N}
\phi\left(\frac{l_0}{m},\frac{l_1}{m},\ldots,\frac{l_p}{m}\right)E_{H,l_0,m}V_1E_{H,l_1,m}V_2\ldots V_p E_{H,l_p,m},
\end{align}
where $\phi=f^{[p]}$, $E_{H,l_k,m}=E_H\left(\left[\frac{l_k}{m},\frac{l_k+1}{m}\right)\right)$,
for $k=0,\ldots,p$, $\frac{1}{\alpha}=\frac{1}{\alpha_1}+\ldots+\frac{1}{\alpha_p}$,
and $s$-$\lim$ denotes a limit in the strong operator topology on the tuples $(V_1,\ldots,V_p)\in \fS^{\alpha_1} \times \ldots \times \fS^{\alpha_p}$.
When $p=0$, we will use the symbol $\hat T_{f^{[0]}}$ (or $T_{f^{[0]}}$) to refer to the operator $\phi(H)$.

We need the following algebraic properties of $\hat T_\phi$, which can be derived straightforwardly from the definition \eqref{TRmoi}.

\begin{thm}
\label{properties}
Let $H=H^*$ be defined in $\mathcal{H}$. Let $\alpha,\alpha_1,\ldots,\alpha_p\in [1,\infty]$ be such that
$\frac{1}{\alpha}=\frac{1}{\alpha_1}+\ldots
+\frac{1}{\alpha_p}$ and let $V_j\in \fS^{\alpha_j}$, $j=1,\ldots,p$.
\begin{enumerate}[(i)]
\item \label{property1}
Let $\phi_1$ and $\phi_2$ be bounded Borel functions on $\Reals^p$. If the polylinear operators $\hat T_{\phi_1}^{H,\ldots,H},\hat T_{\phi_2}^{H,\ldots,H}:\fS^{\alpha_1}
    \times \ldots \times \fS^{\alpha_p}\mapsto\fS^\alpha$ exist and are bounded, then
    $\hat T_{\phi_1+\phi_2}^{H,\ldots,H}:\fS^{\alpha_1}
    \times \ldots \times \fS^{\alpha_p}\mapsto \fS^\alpha$ is also bounded and
    \[\hat T_{\phi_1+\phi_2}^{H,\ldots,H}=\hat T_{\phi_1}^{H,\ldots,H}+\hat T_{\phi_2}^{H,\ldots,H}.\]

\item $($\cite[Lemma 3.2(iii)]{PSS}$)$ \label{property2}
Let~$\phi_1: \Reals^{k + 1} \mapsto \Complex$ and~$\phi_2 : \Reals^{p - k
      + 1} \mapsto \Complex$ be bounded Borel functions such that the
    operators~$\hat T_{\phi_1}^{H,\ldots,H}$ and~$\hat T_{\phi_2}^{H,\ldots,H}$ exist and are bounded
    on~$\fS^{\alpha_1} \times \ldots \times \fS^{\alpha_k}$ and~$\fS^{\alpha_{k +
        1}} \times \ldots \times \fS^{\alpha_p}$, respectively.  If $$
    \phi(\lambda_0, \ldots, \lambda_p) := \phi_1 \left( \lambda_0,
      \ldots, \lambda_k \right) \cdot \phi_2 \left( \lambda_k, \ldots,
      \lambda_p \right), $$ then
      the operator~$\hat T_{\phi}^{H,\ldots,H}:\fS^{\alpha_1}
    \times \ldots \times \fS^{\alpha_p}\mapsto \fS^\alpha$ is bounded and $$\hat T_{\phi}^{H,\ldots,H} \left( V_1,
      \ldots, V_p \right) = \hat T_{\phi_1}^{H,\ldots,H} \left( V_1, \ldots, V_k \right)
    \cdot \hat T_{\phi_2}^{H,\ldots,H} \left( V_{k + 1}, \ldots, V_p
    \right). $$

\item $($\cite[Lemma 2.9]{HS-compatible}$)$ \label{property3}
Let $\phi:\Reals^{p}\mapsto\Complex$ and $\psi_1,\psi_2:\Reals\mapsto\Complex$ be bounded Borel functions.
Denote
\[(\psi_1\phi\psi_2)(\la_0,\ldots,\la_p):=\psi_1(\la_0)\phi(\la_0,\ldots,\la_p)\psi_2(\la_p).\]
If $\hat T_\phi^{H,\ldots,H}:\fS^{\alpha_1} \times \ldots \times \fS^{\alpha_p}\mapsto\fS^\alpha$ exists and is bounded, then the operator
$\hat T_{\psi_1\phi\psi_2}^{H,\ldots,H}:\fS^{\alpha_1} \times \ldots \times \fS^{\alpha_p}\mapsto\fS^\alpha$ is also bounded and
\[\hat T_{\psi_1\phi\psi_2}^{H,\ldots,H}(V_1,\ldots,V_p)=\hat T_\phi^{H,\ldots,H}(\psi_1(H)V_1,\ldots,V_p\psi_2(H)).\]
\end{enumerate}
\end{thm}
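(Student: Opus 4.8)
The plan is to argue directly from the defining iterated limit \eqref{TRmoi}. For a mesh parameter $m\in\Nats$ and a truncation $N\in\Nats$, write
\[
S_{\phi}^{m,N}(V_1,\dots,V_p):=\sum_{|l_0|,\dots,|l_p|\leq N}\phi\!\left(\tfrac{l_0}{m},\dots,\tfrac{l_p}{m}\right)E_{H,l_0,m}V_1E_{H,l_1,m}V_2\cdots V_pE_{H,l_p,m}
\]
for the finite approximant, so that $\hat T_\phi^{H,\dots,H}=s\text{-}\lim_m\bigl(\|\cdot\|_\alpha\text{-}\lim_N S_\phi^{m,N}\bigr)$ whenever the right side exists; the whole argument consists of observing that the asserted identities already hold at the level of the $S^{m,N}$ and then passing to the two limits. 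Two elementary facts do all the work: $\phi$ enters $S_\phi^{m,N}$ only through its values at the finitely many grid points $(l_0/m,\dots,l_p/m)$, and the $E_{H,l_k,m}$ are mutually orthogonal projections, $E_{H,l_k,m}E_{H,l_k',m}=\delta_{l_k l_k'}E_{H,l_k,m}$, which sum strongly to $\identity$.

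For (i): from $(\phi_1+\phi_2)(l_0/m,\dots,l_p/m)=\phi_1(l_0/m,\dots)+\phi_2(l_0/m,\dots)$ one gets $S_{\phi_1+\phi_2}^{m,N}=S_{\phi_1}^{m,N}+S_{\phi_2}^{m,N}$ identically in $m$, in $N$, and on the tuple $(V_1,\dots,V_p)$. Since by hypothesis the inner $\|\cdot\|_\alpha$-limit over $N$ exists for $\phi_1$ and for $\phi_2$, and addition is $\|\cdot\|_\alpha$-continuous, it exists for $\phi_1+\phi_2$ and equals the sum of the two; the same reasoning with strong-operator continuity of addition gives the outer limit over $m$. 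This yields simultaneously the boundedness of $\hat T_{\phi_1+\phi_2}^{H,\dots,H}$ and the additivity identity, with norm at most the sum of the two norms.

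For (ii) and (iii) — which are \cite[Lemma 3.2(iii)]{PSS} and \cite[Lemma 2.9]{HS-compatible} — I would run the same scheme but now exploit orthogonality of the projections. Writing $\beta_1,\beta_2$ for the Hölder exponents of the two blocks (so $1/\beta_1=\sum_{j\leq k}1/\alpha_j$, $1/\beta_2=\sum_{j>k}1/\alpha_j$, and $1/\alpha=1/\beta_1+1/\beta_2$), re-indexing the double sum over the shared index $l_k$ and using $E_{H,l_k,m}E_{H,l_k',m}=\delta_{l_k l_k'}E_{H,l_k,m}$ gives the exact factorization
\[
S_{\phi}^{m,N}(V_1,\dots,V_p)=S_{\phi_1}^{m,N}(V_1,\dots,V_k)\cdot S_{\phi_2}^{m,N}(V_{k+1},\dots,V_p)
\]
when $\phi(\la_0,\dots,\la_p)=\phi_1(\la_0,\dots,\la_k)\phi_2(\la_k,\dots,\la_p)$. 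Passing to the inner limit over $N$ is then handled by Hölder's inequality for Schatten norms, which simultaneously shows that the product of the two $N$-approximants converges in $\|\cdot\|_\alpha$ and that $\hat T_\phi^{H,\dots,H}$ is bounded with $\|\hat T_\phi^{H,\dots,H}\|\leq\|\hat T_{\phi_1}^{H,\dots,H}\|\cdot\|\hat T_{\phi_2}^{H,\dots,H}\|$; passing to the outer strong limit over $m$ uses that a product of two sequences converging strongly on tuples with uniformly bounded operator norms converges strongly to the product, the uniform bound being supplied by the estimate just obtained (and, at the base of the recursion, by Theorem \ref{moib}). Part (iii) is the special case in which one factor depends on a single variable: with $\psi_i^{(m)}:=\sum_l\psi_i(l/m)\mathbf 1_{[l/m,(l+1)/m)}$ one has $\sum_l\psi_1(l_0/m)E_{H,l_0,m}=\psi_1^{(m)}(H)\to\psi_1(H)$ strongly, and similarly for $\psi_2$, so the endpoint grid factors can be absorbed into $V_1$ and $V_p$ before the limits are taken.

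The one point that needs genuine care is this interchange of the two limits in (ii): the factorization of $S_\phi^{m,N}$ is exact and costs nothing, but to multiply the two limit operators one needs joint $\|\cdot\|_\alpha$-continuity of multiplication $\fS^{\beta_1}\times\fS^{\beta_2}\to\fS^\alpha$ for the inner (norm) limit and a uniform-in-$m$ operator-norm bound for the outer (strong) limit. Both are available — the first is Hölder's inequality, the second follows from the inner estimate together with Theorem \ref{moib} — so no input beyond \eqref{TRmoi}, Hölder's inequality, and Theorem \ref{moib} is required.
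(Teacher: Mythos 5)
Your derivation is, in approach, exactly what the paper intends: Theorem \ref{properties} carries no proof in the paper beyond the preceding remark that the properties ``can be derived straightforwardly from the definition \eqref{TRmoi}'' together with citations to \cite{PSS} for (ii) and \cite{HS-compatible} for (iii), and the mechanism in those references is the one you use --- exact additivity/factorization of the finite grid sums coming from orthogonality of the projections $E_{H,l,m}$, followed by H\"older's inequality to pass to the two limits. Parts (i) and (ii) are complete as you give them; in (ii) you can even drop the uniform-boundedness worry for the outer limit, since the $s$-limit in \eqref{TRmoi} is pointwise on tuples with values converging in the norm of the target ideal, so the passage from $A_mB_m$ to $AB$ is again just H\"older's inequality.

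The one step that fails as written is in (iii): for a bounded \emph{Borel} function $\psi_1$, the discretization $\psi_1^{(m)}=\sum_l\psi_1(l/m)\chi_{[l/m,(l+1)/m)}$ need not converge to $\psi_1$ pointwise (take $\psi_1=\chi_{\mathbb{Q}}$, for which $\psi_1^{(m)}\equiv 1$), so the asserted strong convergence $\psi_1^{(m)}(H)\to\psi_1(H)$ can fail; your argument therefore establishes (iii) only for $\psi_i$ continuous (or continuous $E_H$-almost everywhere). That covers every use made in the paper, where the $\psi_i$ are $u$, $fu$, $\sqrt{f}$ and the like, but it is narrower than the stated hypothesis. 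Even granting that convergence, your last limit evaluates the mesh-$m$ multilinear maps at moving arguments, so you should record (a) that these maps are uniformly bounded in $m$ (by the uniform boundedness principle, or from Theorem \ref{moib} at the base of the recursion) and (b) that $\psi_1^{(m)}(H)V_1\to\psi_1(H)V_1$ in $\fS^{\alpha_1}$, which is standard when $\alpha_1<\infty$ but needs a separate argument when $\alpha_1=\infty$. None of this affects the paper, which in any case defers (iii) to \cite[Lemma 2.9]{HS-compatible}.
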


\section{Asymptotic expansions}
In this section we prove the Taylor asymptotic expansion \eqref{2} for $H_0$ having compact resolvent and find bounds for the remainder $R_{H_0,f,n}$.
\\

\paragraph{\bf Compact resolvent}
We start with deriving estimates for the transformations \eqref{trans}, which will imply estimates for directional operator derivatives and the remainders $\mathcal{R}_{H_0,f,n}(V)$ defined in \eqref{eq:sumOp}.

\begin{lemma}\label{constants}
Let $H=H^*$ be defined in $\mathcal{H}$ and have compact resolvent and let $V=V^*\in\BH$. Denote\footnote{As usually,
%$x\mapsto \lceil x\rceil$ denotes the ceiling function,
$x\mapsto \lfloor x\rfloor$ denotes the floor function.} %$j_n=\max\{1,\lceil\log(n+1)\rceil\}$
$j_n=1+\lfloor\log_2(n)\rfloor$.
Then, for each function $0\leq f \in C_c^{n+1}$ with $f^{2^{-j_n}}\in C_c^{n+1}$, the transformation $T_{f^{[n]}}^{H,\ldots,H}$ is a bounded polylinear operator from $\BH\times\cdots\times \BH$ to $\fS^1$ and
\begin{align}
\label{compactest}
\left\|T_{f^{[n]}}^{H,\ldots,H}(V,\ldots,V)\right\|_1&\leq a_n\cdot \|V\|^n\cdot  \Tr\big(E_H(\supp f)\big)\\
\nonumber
&\quad\times\max_{1\leq k\leq j_n}\big\|f^{2^{-k}}\big\|_\infty\cdot\left(\max_{\substack{1\leq k\leq j_n\\1\leq d\leq n}}\left\{1,\;\big\|f^{2^{-k}}\big\|_{G_d}\right\}\right)^n,
\end{align}
where\footnote{$\{a_n\}_{n=1}^\infty=\{2,4,6,10,14,20,26,36,46,60,74,94,114,140,\ldots\}$}
\begin{align}
\label{an}
a_1=2,\quad a_k=\begin{cases}a_{k-1}+a_{\frac{k}{2}} & \text{if } k \text{ is even}\\
a_{k-1}+a_{\frac{k-1}{2}} & \text{if } k\geq 3 \text{ is odd}.\end{cases}
\end{align}
\end{lemma}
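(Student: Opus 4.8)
The plan is to reduce the Schatten-$\fS^1$ bound on $T_{f^{[n]}}^{H,\ldots,H}(V,\ldots,V)$ to the $\fS^2$ bounds of Theorem \ref{moib} by splitting $f$ multiplicatively into factors and using the H\"older-type product rule of Theorem \ref{properties}\eqref{property2}. The key observation is that if we write $f=g\cdot h$ with $g,h\geq 0$, then $f^{[n]}(\la_0,\ldots,\la_n)$ is a sum, over all ways of distributing a ``cut point'' in the divided-difference expansion, of products $g^{[k]}(\la_0,\ldots,\la_k)\,h^{[n-k]}(\la_k,\ldots,\la_n)$ (the Leibniz rule for divided differences); correspondingly $T_{f^{[n]}}^{H,\ldots,H}$ decomposes as a sum of composites $T_{g^{[k]}}^{H,\ldots,H}(V,\ldots,V)\cdot T_{h^{[n-k]}}^{H,\ldots,H}(V,\ldots,V)$ via Theorem \ref{properties}\eqref{property1},\eqref{property2}. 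Taking $g=h=f^{1/2}$ at the first stage gives a sum of products of two transformations of order $\leq n$, and the number of summands is what governs the recursion for $a_n$ (one term of order $(n-1,1)$ or $(1,n-1)$ handled inductively, paired with lower-order pieces, plus a ``balanced'' middle term of order roughly $(n/2,n/2)$ or $((n\pm1)/2)$).

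More precisely, I would argue by strong induction on $n$. The base case $n=1$: $T_{f^{[1]}}^{H,H}(V)$ is handled by writing $f=f^{1/2}\cdot f^{1/2}$, applying the product rule to get $T_{(f^{1/2})^{[1]}}^{H,H}(V)\cdot (f^{1/2})(H) + (f^{1/2})(H)\cdot T_{(f^{1/2})^{[1]}}^{H,H}(V)$ (using $(\la_0-\la_1)^{-1}(g(\la_0)g(\la_1)-g(\la_0)^2+g(\la_0)^2-\ldots)$ — i.e.\ the two-term Leibniz split), then estimating each product by $\|\cdot\|_2\cdot\|\cdot\|_2\leq \|\cdot\|_1$ via Theorem \ref{moib} with $\alpha=1,\alpha_1=\alpha_2=2$ on the $T$-factor and Lemma \ref{fH}(i)-style control on the $f^{1/2}(H)$-factor; this produces the constant $2$ and the structure $\Tr(E_H(\supp f))$ times sup-norms times $G_d$-seminorms. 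Note that since $f^{1/2}$ need not be smooth we keep splitting: at each stage, a factor that appears inside a $T_{\cdot^{[k]}}$ with $k\geq 1$ must be $C^{k+1}$-ish, so we pass to $f^{2^{-j}}$ until the innermost exponent is small enough — this is exactly why the hypothesis requires $f^{2^{-j_n}}\in C_c^{n+1}$, and why $j_n=1+\lfloor\log_2 n\rfloor$ (after $j$ halvings one reaches a product of $2^j$ copies of $f^{2^{-j}}$, distributed among $T$-transformations whose orders sum to $n$, so once $2^{j}>n$ no single transformation has order exceeding... the point is each of the $\leq n$ nontrivial transformation-factors gets a well-defined divided difference of order $\leq n$ of a function of the form $f^{2^{-k}}$ with $k\leq j_n$, which lies in $G_d$ by the $C^{n+1}_c$ hypothesis).

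In the inductive step I would take $f=f^{1/2}\cdot f^{1/2}$, expand $f^{[n]}$ by the divided-difference Leibniz rule into $n+1$ terms $(f^{1/2})^{[k]}(\la_0,\ldots,\la_k)(f^{1/2})^{[n-k]}(\la_k,\ldots,\la_n)$, $k=0,\ldots,n$, and correspondingly write $T_{f^{[n]}}^{H,\ldots,H}(V,\ldots,V)=\sum_{k=0}^n T_{(f^{1/2})^{[k]}}^{H,\ldots,H}(V,\ldots,V)\,T_{(f^{1/2})^{[n-k]}}^{H,\ldots,H}(V,\ldots,V)$. For the extreme terms $k=0$ and $k=n$ one factor is $f^{1/2}(H)\in\fS^2$ with $\|f^{1/2}(H)\|_2\leq\|f^{1/2}\|_\infty^{1/2}\,\Tr(E_H(\supp f))^{1/2}$ (actually $\|f^{1/2}(H)\|_2^2 = \Tr(f(H)E_H(\supp f))\leq \|f\|_\infty \Tr(E_H(\supp f))$) and the other factor is of order $n$ but in the ``half'' function $f^{1/2}$; these are the terms we recurse into, each contributing (after the next halving) the recursion $a_n\rightsquigarrow a_{n-1}+(\text{balanced term})$. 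For the middle terms $1\leq k\leq n-1$ each factor has order $\geq 1$ and $\leq n-1$, so we apply the induction hypothesis to each at level $<n$; the ``worst'' middle term in terms of constant accounting is the most balanced one, $k=\lfloor n/2\rfloor$, which supplies the $a_{n/2}$ (resp.\ $a_{(n\pm1)/2}$) in \eqref{an}. Tracking the $\fS^2\times\fS^2\to\fS^1$ H\"older inequality at each product, the sup-norm factors combine to $\max_k\|f^{2^{-k}}\|_\infty$ raised to appropriate powers, the $G_d$-seminorm factors (from Theorem \ref{moib}) combine to $(\max\{1,\|f^{2^{-k}}\|_{G_d}\})^n$ — the $1$ is there to absorb the $f^{1/2}(H)$ factors which contribute no $G_d$-norm — and $\|V\|^n$ comes out with one power per copy of $V$. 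The main obstacle is bookkeeping: verifying that the combinatorial recursion on the number of product-terms really is \eqref{an} (one has to be careful that halving a function of order $n$ produces exactly an order-$(n-1,1)$ split plus the balanced split, not more, when counting only the terms that are recursed rather than bounded by the hypothesis at lower order), and confirming that every function appearing inside a nontrivial $T$-transformation is in the requisite $G_d\subset\mathcal W_d$ so that Theorems \ref{moib} and \ref{properties} genuinely apply — i.e.\ that $j_n$ halvings suffice and that $f^{2^{-k}}\in C_c^{n+1}$ for all $k\le j_n$ (monotone in $k$, so it follows from the $k=j_n$ hypothesis since $f$ itself is $C_c^{n+1}$ and $f^{2^{-k}}=(f^{2^{-j_n}})^{2^{j_n-k}}$ is then a product of $C_c^{n+1}$ functions).
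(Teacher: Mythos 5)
Your overall architecture (Leibniz rule for divided differences applied to $f=\sqrt f\cdot\sqrt f$, the product rule of Theorem \ref{properties}\eqref{property2}, recursion through dyadic roots $f^{2^{-k}}$, and the explanation of $j_n$) matches the paper's. However, the Schatten--H\"older accounting at the heart of your estimate has a genuine gap: you propose to bound each product by $\|\cdot\|_2\cdot\|\cdot\|_2\le\|\cdot\|_1$, invoking ``Theorem \ref{moib} with $\alpha=1$, $\alpha_1=\alpha_2=2$ on the $T$-factor.'' Theorem \ref{moib} requires the perturbations to lie in $\fS^{\alpha_j}$, but here $V\in\BH$ only, so the theorem yields nothing better than an operator-norm bound on $T_{(f^{1/2})^{[k]}}^{H,\ldots,H}(V,\ldots,V)$; there is no quantitative $\fS^2$ bound for these factors available from the cited tools. (They are in fact finite rank because $f^{[k]}$ vanishes off a neighborhood of $\supp f$ in each variable, but extracting a usable $\fS^2$ norm from that requires inserting spectral projections, i.e.\ essentially the other argument.) The only quantitative source of trace-class-ness in this setting is $f^{2^{-i}}(H)\in\fS^1$ from Lemma \ref{fH}(i), and the correct split is therefore $\|AB\|_1\le\|A\|_1\|B\|$: in each product one factor is placed in $\fS^1$ (by the induction hypothesis applied to $\sqrt f$, or by Lemma \ref{fH}(i) when its order is $0$) and the other is bounded in \emph{operator} norm via Theorem \ref{moib} with all $\alpha_j=\infty$. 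This is what the paper does.

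A second, related problem is your treatment of the middle terms $1\le k\le n-1$: applying the induction hypothesis ``to each'' factor gives two $\fS^1$ bounds, and $\|AB\|_1\le\|A\|_1\|B\|_1$ then produces $\Tr\big(E_H(\supp f)\big)^2$, which does not match the single power of $\Tr\big(E_H(\supp f)\big)$ in \eqref{compactest}. In the paper only the \emph{lower-order} factor of each product is estimated in $\fS^1$, the higher-order one in operator norm, and the decomposition is written symmetrically so that the non-balanced terms reproduce exactly the sum occurring at order $n-1$ (contributing $a_{n-1}$) while the single balanced term contributes $a_{\lfloor n/2\rfloor}$ (resp.\ $a_{(n-1)/2}$) via the induction hypothesis on its $\fS^1$ factor; this is what produces the recursion \eqref{an}. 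Your sketch gestures at this recursion but, as written, neither the Schatten exponents nor the count of summands would come out right. The remaining ingredients of your proposal (the role of $j_n$, the observation that $f^{2^{-k}}=(f^{2^{-j_n}})^{2^{j_n-k}}\in C_c^{n+1}$, the origin of the $\max\{1,\cdot\}$) are correct.
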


\begin{proof}
Note that by the Leibnitz formula for the divided difference,
\begin{align}
\label{obs0}
f^{[n]}=\big(\sqrt{f}\cdot\sqrt{f}\big)^{[n]}
&=\sum_{k=0}^{\lfloor\frac{n-1}{2}\rfloor}
\sqrt{f}^{[k]}(\la_0,\ldots,\la_k)\sqrt{f}^{[n-k]}(\la_k,\ldots,\la_n)\\
\nonumber
&\quad+\sum_{k=0}^{\lfloor\frac{n-1}{2}\rfloor}\sqrt{f}^{[n-k]}(\la_0,\ldots,\la_{n-k})\sqrt{f}^{[k]}(\la_{n-k},\ldots,\la_n)\\
\nonumber
&\quad+\begin{cases}
\sqrt{f}^{[\frac{n}{2}]}(\la_0,\ldots,\la_{\frac{n}{2}})\sqrt{f}^{[\frac{n}{2}]}(\la_{\frac{n}{2}},\ldots,\la_n) &
\text{if } n \text{ is even}\\
0 & \text{if } n \text{ is odd}.
\end{cases}
\end{align}
Hence, by Theorem \ref{properties} (and the equality $\hat T_{f^{[n]}}=T_{f^{[n]}}$), we have
\begin{align}
\label{obsT}
& T_{f^{[n]}}^{H,\dots,H}(V,\ldots,V)=\sum_{k=0}^{\lfloor\frac{n-1}{2}\rfloor}
T_{\sqrt{f}^{[k]}}^{H,\ldots,H}(V,\ldots,V)\cdot T_{\sqrt{f}^{[n-k]}}^{H,\ldots,H}(V,\ldots,V)\\
\nonumber
&\quad+\sum_{k=0}^{\lfloor\frac{n-1}{2}\rfloor}T_{\sqrt{f}^{[n-k]}}^{H,\ldots,H}(V,\ldots,V)\cdot
T_{\sqrt{f}^{[k]}}^{H,\ldots,H}(V,\ldots,V)+\begin{cases}
\left(T_{\sqrt{f}^{[\frac{n}{2}]}}^{H,\ldots,H}(V,\ldots,V)\right)^2 &
\text{if } n \text{ is even}\\
0 & \text{if } n \text{ is odd}.
\end{cases}
\end{align}
Recall that when $k=0$, the operator $T_{\sqrt{f}^{[k]}}^{H,\ldots,H}(V,\ldots,V)$ degenerates to the operator  $\sqrt{f}(H)$.

If $n=1$, then \eqref{obsT} reduces to
\begin{align}
\label{T1}
T_{f^{[1]}}^{H,H}(V)=\sqrt{f}(H)\cdot T_{\sqrt{f}^{[1]}}^{H,H}\big(V\big)+T_{\sqrt{f}^{[1]}}^{H,H}\big(V\big)\cdot\sqrt{f}(H).
\end{align}
From Theorem \ref{moib} and the straightforward inequality \eqref{straight} applied to $\sqrt{f}(H)$, we derive
\begin{align}
\label{T1n}
\left\|T_{f^{[1]}}^{H,H}(V)\right\|_1\leq 2\cdot\big\|\sqrt{f}\big\|_\infty\cdot\big\|\sqrt{f}\big\|_{G_1}
\cdot\|V\|\cdot\Tr\big(E_H(\supp f)\big).
\end{align}

If $n=2$, then \eqref{obsT} reduces to
\begin{align}
\label{T2}
T_{f^{[2]}}^{H,H,H}(V,V)&=\sqrt{f}(H)\cdot T_{\sqrt{f}^{[2]}}^{H,H,H}(V,V)+T_{\sqrt{f}^{[2]}}^{H,H,H}(V,V)\cdot \sqrt{f}(H)\\
\nonumber
&\quad+T_{\sqrt{f}^{[1]}}^{H,H}(V)\cdot T_{\sqrt{f}^{[1]}}^{H,H}(V).
\end{align}
Hence,
\begin{align*}
\left\|T_{f^{[2]}}^{H,H,H}(V,V)\right\|_1\leq 2\,\big\|\sqrt{f}(H)\big\|_1\cdot \left\|T_{\sqrt{f}^{[2]}}^{H,H,H}(V,V)\right\|+\left\|T_{\sqrt{f}^{[1]}}^{H,H}(V)\right\|_1\cdot \left\|T_{\sqrt{f}^{[1]}}^{H,H}(V)\right\|.
\end{align*}
Applying, in addition, Theorem \ref{moib} and the estimates \eqref{straight} and \eqref{T1n}, we obtain
\begin{align}
\label{T2n}
\left\|T_{f^{[2]}}^{H,H,H}(V,V)\right\|_1 &\leq
4\,\|V\|^2\cdot\Tr\big(E_H(\supp f)\big)\cdot\max\left\{\big\|\sqrt{f}\big\|_\infty, \big\|\sqrt[4]{f}\big\|_\infty\right\}\\
\nonumber
&\quad\times\left(\max\left\{1,\;\big\|\sqrt{f}\big\|_{G_1}, \big\|\sqrt[4]{f}\big\|_{G_1}, \big\|\sqrt{f}\big\|_{G_2}\right\}\right)^2.
\end{align}

Application of Theorem \ref{properties} and the decomposition \eqref{obsT} gives
\begin{align}
\label{rec}
\left\|T_{f^{[n]}}^{H,\ldots,H}(V,\ldots,V)\right\|_1
&\leq 2\sum_{k=0}^{\lfloor\frac{n-1}{2}\rfloor}\left\|T_{\sqrt{f}^{[k]}}^{H,\ldots,H}(V,\ldots,V)\right\|_1
\cdot\left\|T_{\sqrt{f}^{[n-k]}}^{H,\ldots,H}(V,\ldots,V)\right\|\\
\nonumber
&\quad+\begin{cases}
\left\|T_{\sqrt{f}^{[\frac{n}{2}]}}^{H,\ldots,H}(V,\ldots,V)\right\|_1 \cdot
\left\|T_{\sqrt{f}^{[\frac{n}{2}]}}^{H,\ldots,H}(V,\ldots,V)\right\| &
\text{if } n \text{ is even}\\
0 & \text{if } n \text{ is odd}.
\end{cases}
\end{align}
where the involved transformations are bounded by Theorem \ref{moib}.
We will prove by induction on $n$ that the right hand side RHS of \eqref{rec} satisfies
\begin{align}
\label{indineq}
\text{RHS} \leq a_n\cdot \|V\|^n\cdot  \Tr\big(E_H(\supp f)\big)\cdot \max_{1\leq k\leq j_n}\big\|f^{2^{-k}}\big\|_\infty\cdot\left(\max_{\substack{1\leq k\leq j_n\\1\leq d\leq n}}\left\{1,\;\big\|f^{2^{-k}}\big\|_{G_d}\right\}\right)^n.
\end{align}

Suppose that the estimate \eqref{indineq} is proved for $n-1$ (and for all $1\leq m\leq n-1$). Then we have

\begin{align}
\label{obs1}
&\left\|T_{\sqrt{f}^{[p]}}^{H,\ldots,H}(V,\ldots,V)\right\|_1\cdot
\left\|T_{\sqrt{f}^{[q]}}^{H,\ldots,H}(V,\ldots,V)\right\|\\
\nonumber
&\quad\leq a_p\cdot\Tr\big(E_H(\supp f)\big)\cdot\|V\|^n\cdot\max_{1\leq k\leq j_n}\big\|f^{2^{-k}}\big\|_\infty\cdot\left(\max_{\substack{1\leq k\leq j_n\\1\leq d\leq n}}\left\{1,\;\big\|f^{2^{-k}}\big\|_{G_d}\right\}\right)^n,
\end{align}
where $p=\frac{n-1}{2}$, $q=\frac{n+1}{2}$ if $n$ is odd and $p=q=\frac{n}{2}$ if $n$ is even.
Similarly, we have the bound
\begin{align}
\label{obs2}
&\left\|T_{f^{[n]}}^{H,\ldots,H}(V,\ldots,V)-T_{\sqrt{f}^{[p]}}^{H,\ldots,H}(V,\ldots,V)\cdot T_{\sqrt{f}^{[q]}}^{H,\ldots,H}(V,\ldots,V)\right\|_1\\
\nonumber
&\quad \leq 2\sum_{k=0}^{\lfloor\frac{n-2}{2}\rfloor}\left\|T_{\sqrt{f}^{[k]}}^{H,\ldots,H}(V,\ldots,V)\right\|_1
\cdot\left\|T_{\sqrt{f}^{[n-k]}}^{H,\ldots,H}(V,\ldots,V)\right\|\\
\nonumber
&\quad\leq a_{n-1}\cdot\Tr\big(E_H(\supp f)\big)\cdot\|V\|^n\cdot \max_{1\leq k\leq j_n}\big\|f^{2^{-k}}\big\|_\infty\cdot\left(\max_{\substack{1\leq k\leq j_n\\1\leq d\leq n}}\left\{1,\;\big\|f^{2^{-k}}\big\|_{G_d}\right\}\right)^n.
\end{align}
Combining \eqref{obs1} and \eqref{obs2} completes the proof of the estimate.

The value of $j_n$ is defined as follows. We repeat recursively the decomposition \eqref{obsT} until each summand in the sum representing $T_{f^{[n]}}^{H,\ldots,H}(V,\ldots,V)$ decomposes into a product of $f^{2^{-i}}(H)\in \fS^1$ (see Lemma \ref{fH}) and operators in the form $T_{(f^{2^{-l}})^{[m]}}^{H,\ldots,H}(V,\ldots,V)$, for some $1\leq i, l\leq j_n$ and $1\leq m\leq n$. We have derived in \eqref{T1n} and \eqref{T2n} that $j_1=1$ and $j_2=1+j_1$. By the analogous reasoning, $j_n=1+j_{\lfloor\frac{n}{2}\rfloor}=\ldots=r+j_{\lfloor\frac{n}{2^r}\rfloor}$. The recursive procedure stops when
$\lfloor\frac{n}{2^r}\rfloor=1$. Hence, $j_n=1+\lfloor\log_2(n)\rfloor$.
\end{proof}

\begin{thm}
\label{derbound}
Let $H_0=H_0^*$ be defined in $\mathcal{H}$ and have compact resolvent and let $V=V^*\in\BH$.
Then, for each function $f \in C_c^{n+1}$,
\begin{align}
\label{O}
\nonumber
&\left\|\frac{1}{n!}\cdot\frac{d^n}{ds^n}\bigg|_{s=t}f(H_0+sV)\right\|_1\leq C_{H_0,f,n}\cdot\Tr\big(E_{H_0+tV}(\supp f)\big)\cdot\|V\|^n,\quad t\in [0,1],\\
&\big|\Tr\left(\mathcal{R}_{H_0,f,n}(V)\right)\big|\leq C_{H_0,f,n}\cdot\sup_{t\in [0,1]}\Tr\big(E_{H_0+tV}(\supp f)\big)\cdot\|V\|^n.
\end{align}
\end{thm}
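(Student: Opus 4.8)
The plan is to derive both estimates from Lemma~\ref{constants}. Fix $t\in[0,1]$ and put $H:=H_0+tV$; by Lemma~\ref{compres} this is a self-adjoint operator with compact resolvent. Since $H_0+sV=H+(s-t)V$ and $f\in C_c^{n+1}\subseteq\mathcal{W}_n$, Theorem~\ref{dermoi} applies and gives
\[
\frac1{n!}\,\frac{d^n}{ds^n}\bigg|_{s=t}f(H_0+sV)=\frac1{n!}\,\frac{d^n}{dr^n}\bigg|_{r=0}f(H+rV)=T_{f^{[n]}}^{H,\ldots,H}(V,\ldots,V),
\]
so the first inequality reduces to bounding $\big\|T_{f^{[n]}}^{H,\ldots,H}(V,\ldots,V)\big\|_1$ by a multiple of $\Tr\big(E_H(\supp f)\big)\,\|V\|^n$.

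Lemma~\ref{constants} is stated only for $0\le f\in C_c^{n+1}$ whose $2^{-j_n}$-th root is again in $C_c^{n+1}$, so the first step is to reduce a general $f\in C_c^{n+1}$ to this case. Splitting $f$ into real and imaginary parts we may assume $f$ real. I would fix $\rho\in C_c^\infty(\Reals)$ with $\rho\ge0$ and $\rho\equiv1$ on a neighborhood of $\supp f$, choose $M>\|f\|_\infty$, and set $g_2:=M\,\rho^{\,2^{j_n}}$ and $g_1:=g_2+f$. Then $g_1,g_2\ge0$ belong to $C_c^{n+1}$, and the point requiring verification is that $g_1^{\,2^{-j_n}},g_2^{\,2^{-j_n}}\in C_c^{n+1}$: indeed $g_2^{\,2^{-j_n}}=M^{2^{-j_n}}\rho$, while on the open set $\{\rho=1\}\supseteq\supp f$ one has $g_1^{\,2^{-j_n}}=(M+f)^{2^{-j_n}}$ (smooth there, since $M+f\ge M-\|f\|_\infty>0$) and on $\Reals\setminus\supp f$ one has $g_1^{\,2^{-j_n}}=M^{2^{-j_n}}\rho$; these two local formulas agree on the overlap (where $f=0$, $\rho=1$, both equal $M^{2^{-j_n}}$), so $g_1^{\,2^{-j_n}}$ is a well-defined compactly supported $C^{n+1}$ function, and likewise $g_1^{\,2^{-k}}\in C_c^{n+1}$ for $1\le k\le j_n$. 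By linearity of divided differences $f^{[n]}=g_1^{[n]}-g_2^{[n]}$, whence, using Theorem~\ref{properties}(i) and the identity $\hat T_{\phi^{[n]}}=T_{\phi^{[n]}}$ for $\phi\in\mathcal{W}_n$,
\[
T_{f^{[n]}}^{H,\ldots,H}(V,\ldots,V)=T_{g_1^{[n]}}^{H,\ldots,H}(V,\ldots,V)-T_{g_2^{[n]}}^{H,\ldots,H}(V,\ldots,V).
\]
Now Lemma~\ref{constants} applies to $g_1$ and to $g_2$, so each term on the right lies in $\fS^1$ with trace norm at most $a_n\,\|V\|^n\,\Tr\big(E_H(\supp g_i)\big)\cdot\max_k\|g_i^{\,2^{-k}}\|_\infty\big(\max_{k,d}\{1,\|g_i^{\,2^{-k}}\|_{G_d}\}\big)^n$; by the triangle inequality in $\fS^1$ this yields the first inequality of the theorem with $C_{H_0,f,n}:=\sum_{i=1,2}a_n\max_k\|g_i^{\,2^{-k}}\|_\infty\big(\max_{k,d}\{1,\|g_i^{\,2^{-k}}\|_{G_d}\}\big)^n$, a constant depending only on $f$ and $n$ (for complex $f$ one uses four nonnegative pieces, absorbing the factor $4$). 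Here one should keep in mind, as in Lemma~\ref{constants}, that the relevant compact set is the support of the cutoff $\rho$, which can be prescribed inside any given neighborhood of $\supp f$.

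For the second inequality, note first that $\mathcal{R}_{H_0,f,n}(V)\in\fS^1$: $f(H_0+V),f(H_0)\in\fS^1$ by Lemma~\ref{fH}(i), and $\frac1{k!}\frac{d^k}{ds^k}|_{s=0}f(H_0+sV)=T_{f^{[k]}}^{H_0,\ldots,H_0}(V,\ldots,V)\in\fS^1$ for $1\le k\le n-1$ by the estimate just proved (with $k$ in place of $n$, $t=0$). By Theorem~\ref{integral}, $\mathcal{R}_{H_0,f,n}(V)=\frac1{(n-1)!}\int_0^1(1-t)^{n-1}D_t\,dt$ with $D_t:=\frac{d^n}{ds^n}|_{s=t}f(H_0+sV)$, the integral taken in the strong sense. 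Expanding the trace in an orthonormal basis $\{\xi_j\}$, using the elementary bound $\sum_j|\langle D_t\xi_j,\xi_j\rangle|\le\|D_t\|_1$ (polar decomposition of $D_t$ and Cauchy--Schwarz), the uniform bound $\sup_{t\in[0,1]}\|D_t\|_1\le n!\,C_{H_0,f,n}\sup_{t\in[0,1]}\Tr\big(E_{H_0+tV}(\supp f)\big)\|V\|^n<\infty$ from the first part (finiteness of the supremum of traces by Corollary~\ref{lemma:Efinite}), the measurability in $t$ of $\langle D_t\xi_j,\xi_j\rangle$ (from the strong-operator continuity of $t\mapsto D_t$ established in the proof of Theorem~\ref{integral}), and Fubini--Tonelli, one obtains $\Tr\big(\mathcal{R}_{H_0,f,n}(V)\big)=\frac1{(n-1)!}\int_0^1(1-t)^{n-1}\Tr(D_t)\,dt$. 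Since $\int_0^1(1-t)^{n-1}\,dt=\frac1n$ and $|\Tr(D_t)|\le\|D_t\|_1$, this gives
\[
\big|\Tr\big(\mathcal{R}_{H_0,f,n}(V)\big)\big|\le\frac1{n!}\sup_{t\in[0,1]}\|D_t\|_1\le C_{H_0,f,n}\sup_{t\in[0,1]}\Tr\big(E_{H_0+tV}(\supp f)\big)\,\|V\|^n .
\]

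The main obstacle is the splitting step: turning an arbitrary $f\in C_c^{n+1}$ — in general neither nonnegative nor possessing a $C^{n+1}$ square root — into finitely many functions to which Lemma~\ref{constants} applies, and checking that their $2^{-j_n}$-th roots remain $C^{n+1}$ across $\partial\supp f$ (the patching argument above). Everything else — the shift reducing to Theorem~\ref{dermoi}, the integral representation of Theorem~\ref{integral}, and the interchange of trace and integral — is routine once Lemma~\ref{constants} is available.
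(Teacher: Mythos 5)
Your proof is correct and follows essentially the same route as the paper: decompose $f$ into nonnegative $C_c^{n+1}$ pieces whose dyadic roots are again $C_c^{n+1}$, apply Lemma \ref{constants} with $H=H_0+tV$ (compact resolvent by Lemma \ref{compres}), and pass to the trace of the remainder via the integral representation of Theorem \ref{integral} together with the uniform $\fS^1$-bound. Your only real additions are the explicit construction of the splitting $f=g_1-g_2$ via the cutoff $\rho$ (the paper merely asserts that such $f_1,f_2$ exist) and the hands-on Fubini argument in place of the paper's citation of a lemma from \cite{Azamov0}; the resulting appearance of $\supp\rho$ rather than $\supp f$ in the bound is an imprecision the paper's proof shares, since there too $\supp f_i$ need only contain $\supp f$.
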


\begin{proof}
Decompose the function $f$ into $f=f_1-f_2$, where $0\leq f_i,f_i^{2^{-j_n}}\in C_c^{n+1}$ for $i=1,2$. From Theorem \ref{dermoi} we have
\begin{align*}
\frac{1}{n!}\cdot\frac{d^n}{ds^n}\bigg|_{s=t}f(H_0+sV)=
T_{f_1^{[n]}}^{H_0+tV,\ldots,H_0+tV}(V,\ldots,V)-T_{f_2^{[n]}}^{H_0+tV,\ldots,H_0+tV}(V,\ldots,V).
\end{align*}
By Lemma \ref{compres}, $H_0+tV$ has compact resolvent. Hence, for each $T_{f_i^{[n]}}$, $i=1,2$, we have the bound as in \eqref{compactest} of Lemma \ref{constants}.

Note that the bound for $\mathcal{R}_{H_0,f,n}(V)$ would follow from the integral representation for the remainder
\begin{align}
\label{eq:integralTr}
\Tr\big(\mathcal{R}_{H_0,f,n}(V)\big)=\frac{1}{(n-1)!}\int_0^1 (1-t)^{n-1}\,\Tr\left(\frac{d^n}{ds^n}\bigg|_{s=t}f(H_0+sV)\right)dt
\end{align}
and the estimate for the derivatives established above. By the argument given in the proof of Theorem \ref{integral},
the functions
\[t\mapsto \frac{d^n}{ds^n}\bigg|_{s=t}f(H_0+sV)\quad\text{and}\quad t\mapsto \left(\frac{d^n}{ds^n}\bigg|_{s=t}f(H_0+sV)\right)^*\] are continuous in the strong operator topology. These functions are also uniformly $\fS^1$-bounded; therefore,
\eqref{eq:integral} implies \eqref{eq:integralTr} on the strength of \cite[Lemma 3.10]{Azamov0}.
\end{proof}

\begin{remarks}
\label{remarks}
\begin{enumerate}[(i)]
\item \label{remarksi}
If $f\geq 0$ and $f^{2^{-j_n}}\in C_c^{n+1}$, then
\begin{align*}
C_{H_0,f,n}&\leq a_n\cdot\max_{1\leq k\leq j_n}\big\|f^{2^{-k}}\big\|_\infty\cdot\left(\max_{\substack{1\leq k\leq j_n\\1\leq d\leq n}}\left\{1,\;\big\|f^{2^{-k}}\big\|_{G_d}\right\}\right)^n,
\end{align*}
where $a_n$ is given by \eqref{an}.

\item The case $n=1$ was handled in \cite{ACS} and it inspired decomposition of $f$ into positive and negative parts and use of dyadic roots of $f$ in the proof of Lemma \ref{constants}. It was established in \cite[Theorem 1.23]{ACS} that the function $f: H\in H_0+(\BH)_{sa}\mapsto f(H)$, with $f\in C_c^3$, is Fr\'{e}chet differentiable and the derivative is continuous in the trace norm. The $n$th order Fr\'{e}chet differentiability in the trace norm can also be established, provided we take $f\in C_c^{n+2}$.
\end{enumerate}
\end{remarks}

\begin{thm}
\label{asexp}
Let $H_0=H_0^*$ be defined in $\mathcal{H}$ and have compact resolvent and $V=V^*\in\BH$.
Then, for each function $f \in C_c^{n+1}$,
\begin{align}
\label{eq:asexp}
&\Tr(f(H_0+V))\\
\nonumber
&\quad=\Tr(f(H_0))+\sum_{k=1}^{n-1}\;\frac1k\sum_{\la_1,\ldots,\la_k\in\text{\rm spec}(H_0)} (f')^{[k-1]}(\la_1,\ldots,\la_k)\,
\Tr\big(E_{H_0}(\la_1)V\ldots E_{H_0}(\la_k)V\big)\\
\nonumber
&\quad\quad+\Tr\big(\mathcal{R}_{H_0,f,n}(V)\big),
\end{align}
with
\[\Tr\big(\mathcal{R}_{H_0,f,n}(V)\big)=\mathcal{O}\big(\|V\|^n\big)\] satisfying \eqref{O}.
\end{thm}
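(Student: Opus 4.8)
\textbf{Proof proposal for Theorem \ref{asexp}.}

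The plan is to identify the finite Taylor sum in \eqref{eq:asexp} with the trace of the polynomial part $\sum_{k=0}^{n-1}\frac{1}{k!}\frac{d^k}{ds^k}|_{s=0}f(H_0+sV)$ appearing in the definition \eqref{eq:sumOp} of $\mathcal{R}_{H_0,f,n}(V)$, so that the claimed expansion becomes the tautology $\Tr(f(H_0+V))=\Tr\big(\sum_{k=0}^{n-1}\frac{1}{k!}\frac{d^k}{ds^k}|_{s=0}f(H_0+sV)\big)+\Tr(\mathcal{R}_{H_0,f,n}(V))$, with the order estimate supplied by Theorem \ref{derbound}. Concretely, the $k=0$ term is $\Tr(f(H_0))$, which is finite by Lemma \ref{fH}(i) since $H_0$ has compact resolvent and $f\in C_c$. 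For $1\le k\le n-1$, I would invoke Theorem \ref{dermoi} to write $\frac{1}{k!}\frac{d^k}{ds^k}|_{s=0}f(H_0+sV)=T_{f^{[k]}}^{H_0,\ldots,H_0}(V,\ldots,V)$, then Lemma \ref{constants} (applied after splitting $f=f_1-f_2$ into nonnegative pieces with nonnegative dyadic roots, exactly as in the proof of Theorem \ref{derbound}) to see this operator lies in $\fS^1$, so its trace is well defined and finite.

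The heart of the matter is then to show
\[
\Tr\Big(T_{f^{[k]}}^{H_0,\ldots,H_0}(V,\ldots,V)\Big)
=\frac1k\sum_{\la_1,\ldots,\la_k\in\text{\rm spec}(H_0)} (f')^{[k-1]}(\la_1,\ldots,\la_k)\,
\Tr\big(E_{H_0}(\la_1)V\ldots E_{H_0}(\la_k)V\big).
\]
For $k\ge 2$ this is precisely Theorem \ref{moimeasure}(ii): its left side equals $\frac{1}{k!}\Tr\big(\frac{d^k}{dt^k}|_{t=0}f(H_0+tV)\big)$, and its right side is $\frac{1}{k!}\cdot(k-1)!\int_{\Reals^k}(f')^{[k-1]}\,d\big(\Tr(E_{H_0}(\cdot)V\cdots E_{H_0}(\cdot)V)\big)=\frac1k\int_{\Reals^k}(f')^{[k-1]}\,d\mu$, and since $H_0$ has compact resolvent $E_{H_0}$ is atomic, supported on $\text{\rm spec}(H_0)$, so the integral collapses to the stated sum over eigenvalues. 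To apply Theorem \ref{moimeasure}(ii) I need $V\in\fS^2$; this is where the hypothesis $V\in\BH$ alone is not literally enough, so here I would instead justify the identity directly from the discrete structure: using Corollary \ref{lemma:Efinite}, the spectrum of $H_0$ is a discrete set $\{\mu_i\}$ of finite-rank eigenprojections with $\sum_i E_{H_0}(\mu_i)=\identity$, and on each finite-dimensional reducing piece (or via the strong-operator-limit description \eqref{TRmoi} of $T_{f^{[k]}}$ together with the cyclicity of the trace and the algebraic identity $\sum_{i_1,\ldots,i_k}(f')^{[k-1]}(\mu_{i_1},\ldots,\mu_{i_k})\langle V\psi_{i_1},\psi_{i_2}\rangle\cdots$) one obtains the formula by the same bookkeeping as in the finite-dimensional computation of \cite{Hansen}. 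For $k=1$, Theorem \ref{moimeasure}(i) (or \eqref{T1} together with cyclicity of the $\fS^1$-trace) gives $\Tr\big(T_{f^{[1]}}^{H_0,H_0}(V)\big)=\int_\Reals f'(\la)\,\Tr(E_{H_0}(d\la)V)=\sum_{\la\in\text{\rm spec}(H_0)}f'(\la)\Tr(E_{H_0}(\la)V)$, matching the $k=1$ summand.

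Having matched term by term, I would conclude by assembling: $\Tr(f(H_0+V))-\Tr(f(H_0))-\sum_{k=1}^{n-1}(\cdots)=\Tr\big(f(H_0+V)-\sum_{k=0}^{n-1}\frac{1}{k!}\frac{d^k}{ds^k}|_{s=0}f(H_0+sV)\big)=\Tr(\mathcal{R}_{H_0,f,n}(V))$, where interchanging $\Tr$ with the finite sum is legitimate since every term is in $\fS^1$. The order statement $\Tr(\mathcal{R}_{H_0,f,n}(V))=\mathcal{O}(\|V\|^n)$ with the explicit bound \eqref{O} is then immediate from Theorem \ref{derbound}. The main obstacle I anticipate is the rigorous passage, under the weak hypothesis $V\in\BH$ (not $V\in\fS^2$), from the abstract transformation $T_{f^{[k]}}^{H_0,\ldots,H_0}(V,\ldots,V)$ to the concrete eigenvalue sum: one must either reduce to finite dimensions by truncating with a spectral projection $E_{H_0}(\supp f)$-type cutoff (using that $f^{[k]}=\psi f^{[k]}\psi$ for any $\psi\equiv 1$ on $\supp f$, via Theorem \ref{properties}\eqref{property3}, so that all the $V$'s get sandwiched between finite-rank projections and hence become Hilbert--Schmidt) and then apply Theorem \ref{moimeasure}(ii), or argue directly with the strong-limit definition \eqref{TRmoi} and dominated convergence for the trace. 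I expect the cleanest route is the cutoff: replacing each $V$ by $E_{H_0}(K)V E_{H_0}(K)$ for a large compact $K\supset\supp f$ changes neither $T_{f^{[k]}}^{H_0,\ldots,H_0}(V,\ldots,V)$ (by \eqref{property3}) nor the eigenvalue sum, and reduces everything to the setting where Theorem \ref{moimeasure}(ii) applies verbatim.
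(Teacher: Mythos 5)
Your overall skeleton matches the paper's: write $f(H_0+V)=f(H_0)+\sum_{k=1}^{n-1}T_{f^{[k]}}^{H_0,\ldots,H_0}(V,\ldots,V)+\mathcal{R}_{H_0,f,n}(V)$ via Theorem \ref{dermoi}, check each summand is trace class via Theorem \ref{derbound} and Lemma \ref{constants}, and reduce everything to the identity \eqref{obs3} for $\Tr\big(T_{f^{[k]}}^{H_0,\ldots,H_0}(V,\ldots,V)\big)$. You also correctly isolate the real difficulty: Theorem \ref{moimeasure} needs $V\in\fS^2$ (resp.\ $\fS^1$ for $k=1$), which is not assumed. But your proposed resolution --- the ``cleanest route'' cutoff --- rests on the false identity $f^{[k]}=\psi f^{[k]}\psi$ for a compactly supported $\psi$ with $\psi\equiv 1$ on $\supp f$. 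Divided differences of a compactly supported $f$ do \emph{not} vanish when some arguments leave $\supp f$: already $f^{[1]}(\la_0,\la_1)=\frac{f(\la_0)-f(\la_1)}{\la_0-\la_1}\neq 0$ when $f(\la_0)\neq 0$ and $\la_1\notin\supp f$. Moreover, Theorem \ref{properties}\eqref{property3} only attaches $\psi_1(H)$ to the first perturbation and $\psi_2(H)$ to the last one, so even a valid outer cutoff would leave the middle copies of $V$ untouched and merely bounded, not Hilbert--Schmidt. Consequently replacing $V$ by $E_{H_0}(K)VE_{H_0}(K)$ changes both $T_{f^{[k]}}^{H_0,\ldots,H_0}(V,\ldots,V)$ and the eigenvalue sum for every finite $K$; the two sides of \eqref{obs3} agree only in the limit $K\nearrow\Reals$, and that limit is exactly what has to be justified.

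That limiting argument is the content you are missing, and it is where the paper does the work. With $E_m=E_{H_0}([-m,m])$ and $V_m=E_mVE_m\in\fS^1$ (Corollary \ref{lemma:Efinite}), Theorem \ref{moimeasure} gives \eqref{obs3} for $V_m$ with the sum restricted to $|\la_i|\le m$. To pass to the limit inside the trace one cannot use strong operator convergence alone; the paper reuses the decomposition from Lemma \ref{constants}, writing $T_{f^{[k]}}^{H_0,\ldots,H_0}(V_m,\ldots,V_m)$ as a sum of products each containing a fixed factor $f^{2^{-i}}(H_0)\in\fS^1$ times transformations $T_{(f^{2^{-l}})^{[p]}}^{H_0,\ldots,H_0}(V_m,\ldots,V_m)$ that converge strongly (dominated convergence in the Bochner integral \eqref{trans}) and are uniformly bounded in operator norm (Theorem \ref{moib}); a fixed trace-class factor multiplied by strongly convergent, uniformly bounded factors yields convergence of traces (\cite[Lemma 2.5]{Azamov0}). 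Your second, briefly mentioned alternative (``argue directly with the strong-limit definition and dominated convergence for the trace'') points in this direction, but without the trace-class anchor $f^{2^{-i}}(H_0)$ the passage from strong convergence to convergence of traces does not go through, so as written the proposal has a genuine gap at its central step.
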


\begin{proof}
By Theorem \ref{dermoi},
\[f(H_0+V)=f(H_0)+\sum_{k=1}^{n-1}T_{f^{[k]}}^{H_0,\ldots,H_0}(V,\ldots,V)
+\mathcal{R}_{H_0,f,n}(V),\]
where each summand is in $\fS^1$ by Theorem \ref{derbound}.
Hence,
\[\Tr(f(H_0+V))=\Tr(f(H_0))+\sum_{k=1}^{n-1}\Tr\big(T_{f^{[k]}}^{H_0,\ldots,H_0}(V,\ldots,V)\big)
+\Tr\big(\mathcal{R}_{H_0,f,n}(V)\big).\]
The bound for the remainder is provided by Theorem \ref{derbound}, so we are left to prove the representation
\begin{align}
\label{obs3}
\nonumber
&\Tr\big(T_{f^{[k]}}^{H_0,\ldots,H_0}(V,\ldots,V)\big)\\
&\quad=\frac1k\sum_{\la_1,\ldots,\la_k\in\text{\rm spec}(H_0)} (f')^{[k-1]}(\la_1,\ldots,\la_k)\,
\Tr\big(E_{H_0}(\la_1)V\ldots E_{H_0}(\la_k)V\big),
\end{align}
for any $k=1,\ldots,n-1$.

Let $E_m:=E_{H_0}([-m,m])$. Clearly, $E_m$ converges to the identity in the strong operator topology and, by Corollary \ref{lemma:Efinite}, $V_m:=E_m V E_m\in \fS^1$. Theorem \ref{moimeasure} implies
\begin{align}
\label{obs4}
\nonumber
&\Tr\big(T_{f^{[k]}}^{H_0,\ldots,H_0}(V_m,\ldots,V_m)\big)\\
&\quad=\frac1k\sum_{\substack{\la_1,\ldots,\la_k\in\text{\rm spec}(H_0)\\ |\la_1|,\ldots,|\la_k| \leq m}} (f')^{[k-1]}(\la_1,\ldots,\la_k)\,
\Tr\big(E_{H_0}(\la_1)V\ldots E_{H_0}(\la_k)V\big).
\end{align}

As it was noted in the proof of Lemma \ref{constants}, $T_{f^{[k]}}^{H_0,\ldots,H_0}(V,\ldots,V)$ is decomposable into a sum where each summand is a product of $f^{2^{-i}}(H_0)\in \fS^1$ and operators in the form $T_{(f^{2^{-l}})^{[p]}}^{H_0,\ldots,H_0}(V,\ldots,V)$, for some $1\leq i, l\leq j_n$ and $1\leq p\leq k$. We also have the completely analogous decomposition for $T_{f^{[k]}}^{H_0,\ldots,H_0}(V_m,\ldots,V_m)$.
Firstly, we verify that $T_{(f^{2^{-l}})^{[p]}}^{H_0,\ldots,H_0}(V_m,\ldots,V_m)$ converges to $T_{(f^{2^{-l}})^{[p]}}^{H_0,\ldots,H_0}(V,\ldots,V)$ in the strong operator topology as $m\rightarrow\infty$ by the Lebesgue dominated convergence theorem for Bochner integrals. Indeed, define
\begin{align*}
&h(\omega)=e^{\i(s_0-s_1)H}V e^{\i(s_1-s_2)H}V\ldots V e^{\i s_p H},\\
& h_m(\omega)=e^{\i(s_0-s_1)H}V_m e^{\i(s_1-s_2)H}V_m\ldots V_m e^{\i s_p H}.
\end{align*}
Then we have convergence of the integrands in \eqref{trans}
\[\lim_{m\rightarrow\infty}h_m(\omega) y=h(\omega)y,\quad
\text{for every }\omega=(s_0,\dots,s_p)\in\Omega,\] and we also have
\[\sup_m\|h_m(\cdot)\|\in L^1\big(\Omega,\sigma_f^{(p)}\big),\] which implies
\begin{align}
\label{lprop}
\lim_{m\rightarrow\infty}T_{(f^{2^{-l}})^{[p]}}^{H_0,\ldots,H_0}(V_m,\ldots,V_m)y
=T_{(f^{2^{-l}})^{[p]}}^{H_0,\ldots,H_0}(V,\ldots,V)y,\quad y\in\mathcal{H}.
\end{align}
%Completely analogously we have
%\[\lim_{m\rightarrow\infty}\big(T_{(f^{2^{-l}})^{[p]}}^{H_0,\ldots,H_0}(V_m,\ldots,V_m)\big)^*y
%=\big(T_{(f^{2^{-l}})^{[p]}}^{H_0,\ldots,H_0}(V,\ldots,V)\big)^*y,\quad y\in\mathcal{H}.\]
Since we have uniform boundedness
\[\sup_m\big\|T_{(f^{2^{-l}})^{[p]}}^{H_0,\ldots,H_0}(V_m,\ldots,V_m)\big\|\leq \big\|f^{2^{-l}}\big\|_{G^p}\|V\|^p\] (see Theorem \ref{moib}), the convergence in \eqref{lprop} along with $f^{2^{-i}}(H_0)\in \fS^1$ implies that
\[\lim_{m\rightarrow\infty}\Tr\big(T_{f^{[k]}}^{H_0,\ldots,H_0}(V_m,\ldots,V_m)\big)= \Tr\big(T_{f^{[k]}}^{H_0,\ldots,H_0}(V,\ldots,V)\big)\] (see, e.g., \cite[Lemma 2.5]{Azamov0}), which also implies convergence of the sequence on the right hand side of \eqref{obs4} to the expression on the right hand side of \eqref{obs3}. Thus, \eqref{obs3} is proved.
\end{proof}

Since \eqref{2} can be written as \eqref{3}, we have the following consequence of Theorem \ref{asexp}.

\begin{cor}
\label{asexpev}
Let $H_0=H_0^*$ be defined in $\mathcal{H}$ and have compact resolvent and let $V=V^*\in\BH$. Let $\{\mu_k\}_{k=1}^\infty$ be a sequence of eigenvalues of $H_0$ counting multiplicity and let $\{\psi_k\}_{k=1}^\infty$ be an orthonormal basis of the respective eigenvectors. Then, for each function $f \in C_c^{n+1}$,
\begin{align*}
&\Tr\big(f(H_0+V)\big)-\Tr\big(f(H_0)\big)\\
\nonumber
&\quad=\sum_{p=1}^{n-1} \frac1p\sum_{i_1,\ldots,i_p}(f')^{[p-1]}(\mu_{i_1},\ldots,\mu_{i_p})\,
\left<V\psi_{i_1},\psi_{i_2}\right>\cdots\left<V\psi_{i_p},\psi_{i_1}\right>+\Tr\big(\mathcal{R}_{H_0,f,n}(V)\big),
\end{align*}
with
\[\Tr\big(\mathcal{R}_{H_0,f,n}(V)\big)=\mathcal{O}\big(\|V\|^n\big)\] satisfying \eqref{O}.
\end{cor}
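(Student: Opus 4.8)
The plan is to observe that this corollary is purely a restatement of Theorem \ref{asexp} in the eigenbasis of $H_0$, so no genuinely new analytic work is needed; the only task is to match up the two forms of the order-$k$ term. Concretely, I would start from the representation established in Theorem \ref{asexp},
\begin{align*}
\Tr\big(T_{f^{[k]}}^{H_0,\ldots,H_0}(V,\ldots,V)\big)=\frac1k\sum_{\la_1,\ldots,\la_k\in\text{\rm spec}(H_0)} (f')^{[k-1]}(\la_1,\ldots,\la_k)\,
\Tr\big(E_{H_0}(\la_1)V\ldots E_{H_0}(\la_k)V\big),
\end{align*}
and rewrite each trace $\Tr\big(E_{H_0}(\la_1)V\ldots E_{H_0}(\la_k)V\big)$ by expanding the spectral projections in the orthonormal eigenbasis $\{\psi_j\}$. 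Since $H_0$ has compact resolvent, its spectrum is a discrete set of eigenvalues of finite multiplicity, so $E_{H_0}(\la)=\sum_{j:\,\mu_j=\la}\langle\,\cdot\,,\psi_j\rangle\psi_j$ and the sum over $\la_1,\ldots,\la_k\in\text{\rm spec}(H_0)$ together with this expansion reorganizes into a single sum over indices $i_1,\ldots,i_k$ of the basis vectors, with $(f')^{[k-1]}(\la_1,\ldots,\la_k)$ becoming $(f')^{[k-1]}(\mu_{i_1},\ldots,\mu_{i_k})$ and $\Tr\big(E_{H_0}(\la_1)V\ldots E_{H_0}(\la_k)V\big)$ becoming $\langle V\psi_{i_1},\psi_{i_2}\rangle\cdots\langle V\psi_{i_k},\psi_{i_1}\rangle$. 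This is exactly the passage from \eqref{3} to \eqref{2} indicated in the sentence preceding the corollary, so I would invoke that identification rather than re-derive it in detail.

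The remaining ingredient is to confirm that the reindexed multi-sum is absolutely convergent, so that the rearrangement is legitimate; this follows because $\Tr\big(T_{f^{[k]}}^{H_0,\ldots,H_0}(V,\ldots,V)\big)$ is a finite quantity (each such operator lies in $\fS^1$ by Theorem \ref{derbound}), and the divided-difference kernel is bounded while $V$ is bounded, so one can appeal to the same dominated-convergence / truncation argument used in the proof of Theorem \ref{asexp} (the truncations $V_m=E_mVE_m$ give finite sums over $|\mu_{i_j}|\le m$ whose limit is the asserted series). In fact one can simply quote the computation in \eqref{obs4} and its limit, now read off in the eigenbasis. The remainder term $\Tr\big(\mathcal{R}_{H_0,f,n}(V)\big)$ and its bound \eqref{O} are carried over verbatim from Theorem \ref{asexp}, since $\mathcal{R}_{H_0,f,n}(V)$ does not depend on the choice of spectral representation.

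The only mild subtlety — and the closest thing to an obstacle — is bookkeeping with multiplicities: when an eigenvalue $\la$ of $H_0$ has multiplicity greater than one, the single summand $\Tr\big(E_{H_0}(\la_1)V\ldots\big)$ unfolds into several terms of the eigenbasis sum, and one must check that summing over basis indices $i_1,\ldots,i_k$ (rather than over distinct eigenvalues) reproduces precisely the trace, with the factor $\frac1k$ and the divided difference evaluated at the repeated values $\mu_{i_1},\ldots,\mu_{i_k}$ exactly as in \eqref{2}. This is a routine check: it amounts to the identity $\Tr(AB\cdots)=\sum_{i}\langle AB\cdots\psi_i,\psi_i\rangle$ applied after inserting resolutions of the identity $\sum_j\langle\,\cdot\,,\psi_j\rangle\psi_j$ between consecutive factors, and grouping the $\psi_j$ by their eigenvalue. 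Once this is noted, the corollary follows immediately from Theorem \ref{asexp}.
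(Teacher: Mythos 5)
Your proposal is correct and matches the paper's treatment: the paper gives no separate proof of Corollary \ref{asexpev}, simply remarking that \eqref{2} can be rewritten as \eqref{3} so that the corollary is an immediate consequence of Theorem \ref{asexp}, which is exactly the eigenbasis identification you carry out. Your additional remarks on absolute convergence via the truncations $V_m=E_mVE_m$ and on the multiplicity bookkeeping are the right routine checks and do not depart from the paper's route.
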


\paragraph{\bf Hilbert-Schmidt resolvent}
Under the assumption $(1+H_0^2)^{-1/2}\in \fS^2$, in Theorem \ref{asexpL2},
we improve the bound for the remainder obtained in Corollary \ref{asexpev} by eliminating $\sup\limits_{t\in [0,1]}\Tr\big(E_{H_0+tV}(\supp f)\big)$ and, consequently, eliminating $\sup\limits_{t\in [0,1]}\max\limits_{s\in\supp f}(1+|s|^2)$ (see connection between these expressions in \eqref{eq:Efinite}).
%and show that the functional $f^{(n)}\mapsto R_{H_0,f,n}(V)$ is given by a locally finite measure
%can be extended to $f$ in the Schwartz space $\mathcal{S}$ (and more general functions). Here $f$ is not required to be an even %function as it was in \cite{vanS}.

\begin{lemma}\label{CL2}
Let $H=H^*$ satisfy $(1+H^2)^{-1/2}\in \fS^2$ and let $V=V^*$ be bounded. Denote $u(t)=(1+t^2)^{1/2}$. Then, for every $n\in\Nats$ and $f\in C_c^{n+1}$, the transformation $T_{f^{[n]}}^{H,\ldots,H}$ is a bounded polylinear mapping from $\BH\times\cdots\times \BH$ to $\fS^1$ and
\[\left\|T_{f^{[n]}}^{H,\ldots,H}(V,\ldots,V)\right\|_1\leq c_{f,n}\cdot\big\|(1+H^2)^{-\frac12}\big\|_2^2\cdot\|V\|^n,\]
where
\begin{align}
\label{eq:CL2}
c_{f,n}\leq
\begin{cases}
\|fu^2\|_{G_1}+2\,\|fu^2\|_\infty\quad\text{\rm if } n=1\\
\|fu^2\|_{G_n}+\frac{n(n+3)}{2}\max\limits_{1\leq k\leq n}\big\{\|f\|_\infty,\|fu\|_\infty,
\|f\|_{G_k},\|fu\|_{G_k}\big\}\cdot\max\limits_{2\leq l\leq n}\|u\|_{G_l}^2\quad\text{\rm if } n\geq 2.
\end{cases}
\end{align}
\end{lemma}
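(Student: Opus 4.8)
The plan is to trade the trace-class control obtained in Lemma~\ref{constants} from $\Tr\big(E_H(\supp f)\big)$ for control through two Hilbert--Schmidt factors $(1+H^2)^{-1/2}=u(H)^{-1}$. Set $g:=fu^2$; since $u^2(t)=1+t^2$ is $C^\infty$ and $f\in C_c^{n+1}$, we have $g\in C_c^{n+1}$. From the factorization $f=\tfrac1u\cdot g\cdot\tfrac1u$ and the Leibniz rule for divided differences of a product of three functions,
\[
f^{[n]}(\la_0,\dots,\la_n)=\sum_{0\le i\le j\le n}\Big(\tfrac1u\Big)^{[i]}(\la_0,\dots,\la_i)\,g^{[j-i]}(\la_i,\dots,\la_j)\,\Big(\tfrac1u\Big)^{[n-j]}(\la_j,\dots,\la_n).
\]
By Theorem~\ref{properties}\eqref{property2}, together with $\hat T_{f^{[n]}}=T_{f^{[n]}}$ (valid since $f\in C_c^{n+1}\subset\mathcal{W}_n$), the operator $T_{f^{[n]}}^{H,\dots,H}(V,\dots,V)$ equals the corresponding sum of products $\hat T_{(1/u)^{[i]}}^{H,\dots,H}(V,\dots,V)\,\hat T_{g^{[j-i]}}^{H,\dots,H}(V,\dots,V)\,\hat T_{(1/u)^{[n-j]}}^{H,\dots,H}(V,\dots,V)$, with the convention $\hat T_{\phi^{[0]}}=\phi(H)$.

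Next I would unravel the reciprocal divided differences. Iterating $0=1^{[m]}=\sum_{p=0}^m u^{[p]}\,(\tfrac1u)^{[m-p]}$ writes $(\tfrac1u)^{[i]}(\la_0,\dots,\la_i)$, for $i\ge1$, as a finite signed sum of products of the form $\tfrac1u(\la_0)\,u^{[i_1]}(\la_0,\dots)\,\tfrac1u(\cdot)\,u^{[i_2]}(\cdot,\dots)\cdots u^{[i_r]}(\dots,\la_i)\,\tfrac1u(\la_i)$ with $i_1+\dots+i_r=i$ and $i_s\ge1$; crucially each such product begins with $\tfrac1u(\la_0)$ and ends with $\tfrac1u(\la_i)$. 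Two facts then drive the estimate: $u(H)^{-1}=(1+H^2)^{-1/2}\in\fS^2$ with $\|u(H)^{-1}\|_\infty\le1$; and $\hat T_{u^{[l]}}^{H,\dots,H}$ is a bounded polylinear operator in the operator norm for $l\ge2$ (because $u=(1+t^2)^{1/2}$ lies in $G_l$ exactly when $l\ge2$, its derivatives of order $\ge2$ decaying like $t^{-l-1}$, so Theorem~\ref{moib} applies), while $\hat T_{(1/u)^{[1]}}^{H,H}$ is bounded on every $\fS^\alpha$ (since $1/u\in G_l$ for all $l\ge0$). Applying Theorem~\ref{properties}\eqref{property3} I would pull the leftmost $\tfrac1u(\la_0)$ and the rightmost $\tfrac1u(\la_n)$ out to the two ends, so that $T_{f^{[n]}}^{H,\dots,H}(V,\dots,V)$ becomes a finite sum of operators of the form $u(H)^{-1}\cdot\big(\text{a product of bounded operators}\big)\cdot u(H)^{-1}$, hence in $\fS^1$ with $\fS^1$-norm at most $\big\|(1+H^2)^{-1/2}\big\|_2^2$ times the operator norm of the bounded middle factor (H\"older). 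Here every interior $u^{[1]}$ is kept between two factors $\tfrac1u$ and read as $-(\tfrac1u)^{[1]}$ (or pushed into a higher $(\tfrac1u)^{[m]}$ or a flanked $u^{[m]}$, $m\ge2$), and every interior $\tfrac1u(H)$ is used only through $\|u(H)^{-1}\|_\infty\le1$, so that exactly two Hilbert--Schmidt factors appear.

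The leading term is the summand $(i,j)=(0,n)$, i.e.\ $\tfrac1u(\la_0)\,g^{[n]}(\la_0,\dots,\la_n)\,\tfrac1u(\la_n)=\hat T_{g^{[n]}}^{H,\dots,H}\big(u(H)^{-1}V,V,\dots,V,Vu(H)^{-1}\big)$, bounded by Theorem~\ref{moib} (with $\alpha_1=\alpha_n=2$ and the remaining indices $\infty$; with the single index $1$ when $n=1$) by $\|g\|_{G_n}\big\|(1+H^2)^{-1/2}\big\|_2^2\|V\|^n=\|fu^2\|_{G_n}\big\|(1+H^2)^{-1/2}\big\|_2^2\|V\|^n$ --- the first term of \eqref{eq:CL2}. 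For the remaining $O(n^2)$ summands one applies the Leibniz identity once more to absorb a boundary factor $\tfrac1u$ into an adjacent block: $\tfrac1u(\la)\,g^{[m]}(\la,\dots)=(fu)^{[m]}(\la,\dots)+(\text{lower-order terms})$ since $\tfrac1u\cdot g=fu$, a further absorption turns $(fu)^{[m]}$ into $f^{[m]}$ since $\tfrac1u\cdot fu=f$, and a bare value $g^{[0]}(\la)$ squeezed between two factors $\tfrac1u(\la)$ collapses to $f(\la)$; as $f,fu,fu^2\in C_c^{n+1}\subset\bigcap_{k=0}^nG_k$, Theorem~\ref{moib} bounds each resulting transformation, and counting the summands and taking the worst product yields $\big|\text{(term)}\big|\le c_{f,n}\big\|(1+H^2)^{-1/2}\big\|_2^2\|V\|^n$ with $c_{f,n}$ as in \eqref{eq:CL2}. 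Running the same computation with distinct $V_1,\dots,V_n$ (replacing $\|V\|^n$ by $\prod_j\|V_j\|$) gives the asserted boundedness of the polylinear map $\BH\times\dots\times\BH\to\fS^1$.

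The step I expect to be the main obstacle is exactly the ``product of bounded operators'' claim, because $u(t)=(1+t^2)^{1/2}$ is neither compactly supported nor integrable and $u'\notin L^2$, so $u\notin G_0\cup G_1$ and $u\notin\mathcal{W}_1$; consequently $u^{[1]}$ is \emph{not} a Schur multiplier on $\BH$ ($u$ is not operator Lipschitz, much as $t\mapsto|t|$ is not), so neither Theorem~\ref{moib} nor the bound \eqref{nest} applies to $u^{[1]}$. The remedy is the structural device above --- never leaving $u^{[1]}$ unflanked, so that it is always absorbed into $(\tfrac1u)^{[1]}$ (legitimate since $1/u\in G_1$) or into a higher-order divided difference --- together with careful tracking of which intermediate objects are defined as $\hat T$ via \eqref{TRmoi} rather than as Bochner integrals, the identity $\hat T_{(fu^2)^{[n]}}=T_{(fu^2)^{[n]}}$ being invoked only at the end. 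A shorter route, at the cost of a cruder constant, bypasses $u$ entirely: from $\tfrac1{1+t^2}=\tfrac1{2\i}\big(\tfrac1{t-\i}-\tfrac1{t+\i}\big)$ one gets $(u^{-2})^{[k]}(\la_0,\dots,\la_k)=\tfrac1{2\i}\big(\prod_{m=0}^k\tfrac1{\la_m-\i}-\prod_{m=0}^k\tfrac1{\la_m+\i}\big)$, so in $f^{[n]}=\sum_{k=0}^n(u^{-2})^{[k]}(\la_0,\dots,\la_k)(fu^2)^{[n-k]}(\la_k,\dots,\la_n)$ the factor $\hat T_{(u^{-2})^{[k]}}^{H,\dots,H}(V,\dots,V)$ is a difference of products of $k+1$ resolvents $(H\mp\i)^{-1}$ and $k$ copies of $V$; putting two of those resolvents into $\fS^2$ at the ends and the rest into $\BH$ bounds its $\fS^1$-norm by $\big\|(1+H^2)^{-1/2}\big\|_2^2\|V\|^k$, while the $k=0$ term is $(1+H^2)^{-1}\hat T_{(fu^2)^{[n]}}^{H,\dots,H}(V,\dots,V)$ with $\big\|(1+H^2)^{-1}\big\|_1=\big\|(1+H^2)^{-1/2}\big\|_2^2$, giving $\big\|T_{f^{[n]}}^{H,\dots,H}(V,\dots,V)\big\|_1\le\big(\|fu^2\|_{G_n}+\sum_{k=1}^n\max\{\|fu^2\|_\infty,\|fu^2\|_{G_{n-k}}\}\big)\big\|(1+H^2)^{-1/2}\big\|_2^2\|V\|^n$ in place of the sharper \eqref{eq:CL2}.
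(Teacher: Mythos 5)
Your core idea --- pivot on $fu^2$, extract two copies of $(1+H^2)^{-1/2}$ at the two ends, handle $u^{[1]}$ only through the Hilbert--Schmidt theory of Theorem \ref{HSest} and the higher $u^{[k]}$ through Theorem \ref{moib} --- is exactly the paper's, and your diagnosis of the obstacle ($u\notin G_1$, so $u^{[1]}$ is not an operator-norm-bounded multiplier) is correct. But the execution diverges, and the divergence is where the gap sits. The paper does not expand $f=\tfrac1u\,(fu^2)\,\tfrac1u$; it conjugates the \emph{symbol}: by Theorem \ref{properties}\eqref{property3}, $T_{f^{[n]}}(V,\dots,V)=\hat T_{uf^{[n]}u}(W,V,\dots,V,W^*)$ with $W=(1+H^2)^{-1/2}V\in\fS^2$, and then Lemma \ref{longL} (two applications of the Leibniz rule, to $(uf)^{[n]}u(\la_n)$ and then to each resulting factor) gives $uf^{[n]}u=(fu^2)^{[n]}-\psi_1-\psi_2-\psi_3$, in which every occurrence of $u^{[k]}$ is anchored at an endpoint $(\la_0,\dots,\la_k)$ or $(\la_{n-k},\dots,\la_n)$ and therefore acts on the $\fS^2$ argument $W$ or $W^*$. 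That structural feature is what makes the $k=1$ terms harmless ($\hat T_{u^{[1]}}(W^*)\in\fS^2$ by Theorem \ref{HSest}) and keeps the count at $1+n+n+\frac{n(n-1)}{2}-$ish, i.e.\ the $\frac{n(n+3)}{2}$ of \eqref{eq:CL2}.

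Your route, by contrast, produces interior blocks $(\tfrac1u)^{[i]}$ that you must unravel via $1^{[m]}=\sum_p u^{[p]}(\tfrac1u)^{[m-p]}$, and the resulting bookkeeping is not closed in your write-up. Concretely: when two order-one factors $u^{[1]}(\la_{p-1},\la_p)$ and $u^{[1]}(\la_p,\la_{p+1})$ are adjacent, they share a single $\tfrac1u(\la_p)$, so after one of them is absorbed into $-(\tfrac1u)^{[1]}(\la_{p-1},\la_p)$ the other is unflanked on its left; the only remaining ways to control it are $u^{[1]}(\la_p,\la_{p+1})\tfrac1u(\la_{p+1})=-u(\la_p)(\tfrac1u)^{[1]}(\la_p,\la_{p+1})$ (which introduces the unbounded symbol $u(\la_p)$) or $\hat T_{u^{[1]}}(V\,u(H)^{-1})$ via Theorem \ref{HSest} (which consumes a \emph{third} Hilbert--Schmidt factor and degrades the bound from $\|(1+H^2)^{-1/2}\|_2^2$ to a higher power of that norm). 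You flag this as ``the main obstacle'' but do not resolve it, and the claim that the surviving terms collapse to the specific constant in \eqref{eq:CL2} is asserted rather than derived. Your partial-fraction fallback is clean and genuinely different, but as you note it proves a weaker statement (a larger $c_{f,n}$), not \eqref{eq:CL2}. To close the argument along the paper's lines, do the Leibniz expansion on $uf^{[n]}u$ rather than on $f^{[n]}$, so that no reciprocal divided differences and no interior $u^{[1]}$ ever appear.
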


We need the following routine lemma.

\begin{lemma}
\label{longL}
Let $f,u\in C^n$. Then,
\begin{align*}
&u(\la_0)\,f^{[n]}(\la_0,\ldots,\la_n)\,u(\la_n)\\
&\quad=(fu^2)^{[n]}(\la_0,\ldots,\la_n)-\psi_1(\la_0,\ldots,\la_n)
-\psi_2(\la_0,\ldots,\la_n)-\psi_3(\la_0,\ldots,\la_n),
\end{align*}
where
\begin{align*}
\psi_1(\la_0,\ldots,\la_n)&=\sum_{k=1}^n (fu)^{[n-k]}(\la_0,\ldots,\la_{n-k})\,u^{[k]}(\la_{n-k},\ldots,\la_n),
\end{align*}
\begin{align*}
\psi_2(\la_0,\ldots,\la_n)&=\sum_{k=1}^n u^{[k]}(\la_0,\ldots,\la_k)\,(fu)^{[n-k]}(\la_k,\ldots,\la_n),
\end{align*}
\begin{align*}
\psi_3(\la_0,\ldots,\la_n)&=\sum_{k=1}^{n-1} u^{[k]}(\la_0,\ldots,\la_k)\sum_{j=1}^{n-k}\,f^{[n-k-j]}(\la_k,\ldots,\la_{n-j})\,u^{[j]}(\la_{n-j},\ldots,\la_n).
\end{align*}
\end{lemma}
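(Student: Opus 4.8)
The plan is to prove the identity by a purely combinatorial/algebraic manipulation of divided differences, with no analysis involved — the statement is an identity among continuous functions of $(\la_0,\dots,\la_n)$, and by density/continuity it suffices to verify it wherever all the $\la_i$ are distinct, where every divided difference is a genuine difference quotient. The starting point is the Leibniz rule for divided differences: for functions $g,h\in C^n$,
\[
(gh)^{[n]}(\la_0,\dots,\la_n)=\sum_{k=0}^{n} g^{[k]}(\la_0,\dots,\la_k)\,h^{[k,n]}(\la_k,\dots,\la_n),
\]
where I abbreviate $h^{[k,n]}(\la_k,\dots,\la_n):=h^{[n-k]}(\la_k,\dots,\la_n)$ for readability. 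The idea is to apply this rule three times, peeling off one factor of $u$ at a time, since $fu^2=(fu)\cdot u = f\cdot u \cdot u$.

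First I would write $fu^2=(fu)\cdot u$ and expand by Leibniz:
\[
(fu^2)^{[n]}(\la_0,\dots,\la_n)=\sum_{k=0}^{n}(fu)^{[k]}(\la_0,\dots,\la_k)\,u^{[n-k]}(\la_k,\dots,\la_n).
\]
The $k=n$ term is $(fu)^{[n]}(\la_0,\dots,\la_n)\,u(\la_n)$, and the remaining terms $k=0,\dots,n-1$ are precisely $\psi_1(\la_0,\dots,\la_n)$ after reindexing $k\mapsto n-k$. So
\[
(fu^2)^{[n]} = (fu)^{[n]}(\la_0,\dots,\la_n)\,u(\la_n)+\psi_1(\la_0,\dots,\la_n).
\]
Next I would expand $(fu)^{[n]}=(u\cdot f u)^{[n]}$... actually more cleanly, $fu = u\cdot f$, so by Leibniz
\[
(fu)^{[n]}(\la_0,\dots,\la_n)=\sum_{k=0}^{n}u^{[k]}(\la_0,\dots,\la_k)\,f^{[n-k]}(\la_k,\dots,\la_n).
\]
Multiplying through by $u(\la_n)$, the $k=0$ term gives $u(\la_0)\,f^{[n]}(\la_0,\dots,\la_n)\,u(\la_n)$ — the quantity we want to isolate — the $k=n$ term gives $u^{[n]}(\la_0,\dots,\la_n)\,f(\la_n)\,u(\la_n)$, and the intermediate terms $k=1,\dots,n-1$ give $\sum_{k=1}^{n-1}u^{[k]}(\la_0,\dots,\la_k)\,f^{[n-k]}(\la_k,\dots,\la_n)\,u(\la_n)$. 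For the last group I would apply Leibniz once more, this time to $f^{[n-k]}(\la_k,\dots,\la_n)\,u(\la_n)=(fu)^{[n-k]}(\la_k,\dots,\la_n)$ written backwards — i.e., expand $(fu)^{[n-k]}(\la_k,\dots,\la_n)=\sum_{j=0}^{n-k}f^{[n-k-j]}(\la_k,\dots,\la_{n-j})\,u^{[j]}(\la_{n-j},\dots,\la_n)$; the $j=0$ term reproduces $f^{[n-k]}(\la_k,\dots,\la_n)\,u(\la_n)$ and the $j\ge 1$ terms, summed over $k=1,\dots,n-1$, are exactly $\psi_3$. Assembling: $u^{[n]}(\la_0,\dots,\la_n)f(\la_n)u(\la_n)$ together with $\sum_{k=1}^{n-1}u^{[k]}(\la_0,\dots,\la_k)(fu)^{[n-k]}(\la_k,\dots,\la_n)$ and the leftover $k=1,\dots,n-1$ pieces reconstitutes $\psi_2=\sum_{k=1}^{n}u^{[k]}(\la_0,\dots,\la_k)(fu)^{[n-k]}(\la_k,\dots,\la_n)$ (the $k=n$ term of $\psi_2$ being $u^{[n]}(\la_0,\dots,\la_n)\,f(\la_n)u(\la_n)$). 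Solving for $u(\la_0)f^{[n]}u(\la_n)$ then yields the claimed formula $u(\la_0)f^{[n]}(\la_0,\dots,\la_n)u(\la_n)=(fu^2)^{[n]}-\psi_1-\psi_2-\psi_3$.

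The only real bookkeeping hazard — and the step I'd be most careful about — is matching the index ranges on the three $\psi_j$'s exactly as stated ($k=1,\dots,n$ on $\psi_1,\psi_2$; $k=1,\dots,n-1$ and $j=1,\dots,n-k$ on $\psi_3$), making sure the "boundary" terms $k=0$, $k=n$, $j=0$ are correctly absorbed or cancelled rather than double-counted. There is no analytic obstacle: the hypothesis $f,u\in C^n$ is exactly what is needed for all divided differences of order $\le n$ to be well-defined and for the Leibniz rule to apply, and the coincidence-limit cases follow by continuity of $f^{[n]}$ in all its arguments. (Formally one may avoid even the density argument by noting the Leibniz rule for divided differences holds verbatim including at coinciding points.)
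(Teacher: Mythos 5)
Your method --- iterated application of the Leibniz rule for divided differences --- is exactly the paper's method; the paper simply runs the same computation in the opposite direction, starting from
\[
u(\la_0)\,f^{[n]}(\la_0,\ldots,\la_n)\,u(\la_n)=(uf)^{[n]}(\la_0,\ldots,\la_n)\,u(\la_n)-\sum_{k=1}^n u^{[k]}(\la_0,\ldots,\la_k)\,f^{[n-k]}(\la_k,\ldots,\la_n)\,u(\la_n)
\]
and expanding each piece once more, whereas you start from $(fu^2)^{[n]}$ and peel off the factors of $u$ one at a time. The two computations are algebraically identical, and your identification of $\psi_1$ (the $k<n$ terms of the first expansion) and of $\psi_2$, $\psi_3$ (from the second and third expansions) is the right bookkeeping.

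However, the very hazard you flagged --- getting the boundary terms and signs right in the final assembly --- is where your write-up slips, and in an instructive way. Carrying out your own steps: $(fu^2)^{[n]}=(fu)^{[n]}u(\la_n)+\psi_1$; next $(fu)^{[n]}u(\la_n)=u(\la_0)f^{[n]}(\la_0,\ldots,\la_n)u(\la_n)+\sum_{k=1}^{n}u^{[k]}(\la_0,\ldots,\la_k)f^{[n-k]}(\la_k,\ldots,\la_n)u(\la_n)$; and since $f^{[n-k]}(\la_k,\ldots,\la_n)u(\la_n)=(fu)^{[n-k]}(\la_k,\ldots,\la_n)-\sum_{j=1}^{n-k}f^{[n-k-j]}(\la_k,\ldots,\la_{n-j})u^{[j]}(\la_{n-j},\ldots,\la_n)$, the last sum equals $\psi_2-\psi_3$. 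Solving therefore gives
\[
u(\la_0)\,f^{[n]}(\la_0,\ldots,\la_n)\,u(\la_n)=(fu^2)^{[n]}(\la_0,\ldots,\la_n)-\psi_1-\psi_2+\psi_3,
\]
with a \emph{plus} sign on $\psi_3$, not the minus sign your final line (and the lemma as printed) asserts. The plus sign is the correct one: take $f\equiv 1$, $u(\la)=\la$, $n=2$; then the left side is $0$ while $(fu^2)^{[2]}=\psi_1=\psi_2=\psi_3=1$, so the right side with $-\psi_3$ equals $-2$ and with $+\psi_3$ equals $0$. So your derivation is sound, but your concluding sentence silently flips the sign of $\psi_3$ to match what is in fact a misprint in the statement. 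The discrepancy is harmless downstream --- the proof of Lemma \ref{CL2} only uses $\psi_3$ through the triangle-inequality bound on $\|T_{\psi_3}\|_1$, where the sign is irrelevant --- but a careful proof should either record the identity with $+\psi_3$ or note the typo explicitly rather than reproduce it.
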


\begin{proof}
By the Leibnitz formula for the divided difference,
\begin{align*}
&u(\la_0)\,f^{[n]}(\la_0,\ldots,\la_n)\,u(\la_n)\\
&=(uf)^{[n]}(\la_0,\ldots,\la_n)\,u(\la_n)-\sum_{k=1}^n u^{[k]}(\la_0,\ldots,\la_k)\,f^{[n-k]}(\la_k,\ldots,\la_n)\,u(\la_n),
\end{align*}
and applying the Leibnitz formula one more time completes the proof.
\end{proof}

\begin{proof}[Proof of Lemma \ref{CL2}]
It is easy to see that $fu^2,fu,f\in G_n$, for any natural $n$, and $u\in G_k$, for any $k\geq 2$.
Note also that $\|u'\|_\infty\leq 1$.

Denote $\widetilde V:=(1+H^2)^{-1/2}V(1+H^2)^{-1/2}\in \fS^1$. For brevity, we denote the function $(\la_0,\ldots,\la_n)\mapsto u(\la_0)\,f^{[n]}(\la_0,\ldots,\la_n)\,u(\la_n)$ by $uf^{[n]}u$.
In case $n=1$, Lemma \ref{longL} and Theorem \ref{properties}, along with the equality $\hat T_{f^{[1]}}=T_{f^{[1]}}$, ensure the decomposition
\begin{align*}
T_{f^{[1]}}^{H,H}(V)&=\hat T_{u f^{[1]}u}^{H,H}(\widetilde V)\\
&=T_{(fu^2)^{[1]}}^{H,H}(\widetilde V)-\big((fu)(H)\big)\cdot \hat T_{u^{[1]}}^{H,H}(\widetilde V)
-\hat T_{u^{[1]}}^{H,H}(\widetilde V)\cdot\big((fu)(H)\big).
\end{align*}
Theorem \ref{HSest} implies
\[\left\|\hat T_{u^{[1]}}(\widetilde V)\right\|_2\leq \|u'\|_\infty\|\widetilde V\|_2\leq \|\widetilde V\|_2.\]
Applying also Theorem \ref{moib} and Lemma \ref{fH} gives
\begin{align*}
\left\|T_{f^{[1]}}^{H,H}(V)\right\|_1 &\leq \|fu^2\|_{G_1}\cdot\big\|(1+H^2)^{-1/2}V(1+H^2)^{-1/2}\big\|_1\\
&\quad+2\|fu^2\|_\infty\cdot \big\|(1+H^2)^{-1/2}\big\|_2\cdot\big\|(1+H^2)^{-1/2}V(1+H^2)^{-1/2}\big\|_2
\end{align*}

Let now $n\geq 2$ and denote $W=(1+H^2)^{-1/2}V$. Since the operator $H$ is fixed, to lighten the notation, we omit the superscript when refer to the transformation $T_{f^{[n]}}(V,\ldots,V)$ and similar ones.
Applying Lemma \ref{longL} and Theorem \ref{properties} leads to the decomposition
\begin{multline}
\label{Tfn}
T_{f^{[n]}}(V,\ldots,V)=T_{u f^{[n]}u}(W,V,\ldots,V,W^*)
=T_{(fu^2)^{[n]}}(W,V,\ldots,V,W^*)\\-T_{\psi_1}(W,V,\ldots,V,W^*)
-T_{\psi_2}(W,V,\ldots,V,W^*)-T_{\psi_3}(W,V,\ldots,V,W^*),
\end{multline}
where
\begin{align*}
&T_{\psi_1}(W,V,\ldots,V,W^*)=-T_{(fu)^{[n-1]}}(W,V,\ldots,V)\cdot \hat T_{u^{[1]}}(W^*)\\
%\displaybreak
&\quad-\sum_{k=2}^{n-1} T_{(fu)^{[n-k]}}(W,V,\ldots,V)\, T_{u^{[k]}}(V,\ldots,V,W^*)
-(fu)(H)\cdot T_{u^{[n]}}(W,\ldots,W^*),
\end{align*}
\begin{align*}
&T_{\psi_2}(W,V,\ldots,V,W^*)=-\hat T_{u^{[1]}}(W)\cdot T_{(fu)^{[n-1]}}(V,\ldots,V,W^*)\\
&\quad\quad-\sum_{k=2}^{n-1} T_{u^{[k]}}(W,V,\ldots,V)\cdot T_{(fu)^{[n-k]}}(V,\ldots,V,W^*)
-T_{u^{[n]}}(W,\ldots,W^*)\cdot(fu)(H),
\end{align*}
\begin{align*}
&T_{\psi_3}(W,V,\ldots,V,W^*)=-\hat T_{u^{[1]}}(W)\cdot T_{f^{[n-2]}}(V,\ldots,V)\cdot \hat T_{u^{[1]}}(W^*)\\
&\quad-\hat T_{u^{[1]}}(W)\cdot\sum_{j=2}^{n-1}T_{f^{[n-1-j]}}(V,\ldots,V)\cdot T_{u^{[j]}}(V,\ldots,V,W^*)\\
&\quad-\sum_{k=2}^{n-1} T_{u^{[k]}}(W,V,\ldots,V)\cdot T_{f^{[n-1-k]}}(V,\ldots,V)\cdot \hat T_{u^{[1]}}(W^*)\\
&\quad-\sum_{k=2}^{n-2} T_{u^{[k]}}(W,V,\ldots,V)\cdot\sum_{j=2}^{n-k}T_{f^{[n-k-j]}}(V,\ldots,V)\cdot
T_{u^{[j]}}(V,\ldots,V,W^*).
\end{align*}
Application of Theorems \ref{moib} and \ref{HSest} implies the bounds
\begin{align}
\label{e1}
\left\|T_{(fu^2)^{[n]}}(W,V,\ldots,V,W^*)\right\|_1\leq\|fu^2\|_{G_n}\cdot\|W\|_2^2 \cdot\|V\|^{n-2},
\end{align}
\begin{align}
\label{e2}
\nonumber
&\left\|T_{\psi_i}(W,V,\ldots,V,W^*)\right\|_1\\
\nonumber
&\quad\leq \bigg(\|fu\|_{G_{n-1}}+\sum_{k=2}^{n-1}\|fu\|_{G_{n-k}}\|u\|_{G_k}+\|fu\|_\infty\|u\|_{G_n}\bigg)\cdot\|W\|_2^2
\cdot\|V\|^{n-2}\\
&\quad\leq n\cdot\max_{1\leq k\leq n-1}\big\{\|fu\|_\infty,\|fu\|_{G_k}\big\}\cdot\max_{2\leq l\leq n}\|u\|_{G_l}\cdot \|W\|_2^2\cdot\|V\|^{n-2},
\end{align}
for $i=1,2$, and
\begin{multline}
\label{e3}
\left\|T_{\psi_3}(W,V,\ldots,V,W^*)\right\|_1
\leq \bigg(\|f\|_{G_{n-2}}+\sum_{j=2}^{n-1}\|f\|_{G_{n-1-j}}\|u\|_{G_j}
+\sum_{k=2}^{n-1}\|u\|_{G_k}\|f\|_{G_{n-1-k}}\\
+\sum_{k=2}^{n-2}\|u\|_{G_k}\sum_{j=2}^{n-k}\|f\|_{G_{n-k-j}}\|u\|_{G_j}\bigg)\cdot\|W\|_2^2\cdot\|V\|^{n-2}\\
\leq \bigg(n-1+n-2+\sum_{k=2}^{n-2}(n-k-1)\bigg)\cdot\max_{1\leq k\leq n-2}\big\{\|f\|_\infty,\|f\|_{G_k}\big\}\cdot\max_{2\leq l\leq n-1}\|u\|_{G_l}^2\cdot \|W\|_2^2\cdot\|V\|^{n-2},
\end{multline}
where $\|f\|_{G_0}$ stands for $\|f\|_\infty$.

Combining \eqref{e1} - \eqref{e3} gives \eqref{eq:CL2}.
\end{proof}

%\begin{remark}
%The proof of Theorems \ref{asexpL2} relies on the representation for the weighted divided difference obtained in Lemma %\ref{longL}, which does not seem to have an analog that would be a fruitful tool for treating the condition %$(1+H_0^2)^{-1/2}\in S^p$, $p>2$.
%\end{remark}

\begin{thm}
\label{asexpL2}
Let $H_0=H_0^*$ be defined in $\mathcal{H}$, let $V=V^*\in\BH$, and suppose that $(1+H_0^2)^{-1/2}\in \fS^2$. Let $\{\mu_k\}_{k=1}^\infty$ be a sequence of eigenvalues of $H_0$ counting multiplicity and let $\{\psi_k\}_{k=1}^\infty$ be an orthonormal basis of the respective eigenvectors. Then, for $n\in\Nats$ and $f\in C_c^{n+1}$,
\begin{align*}
&\Tr\big(f(H_0+V)\big)-\Tr\big(f(H_0)\big)\\
\nonumber
&\quad=\sum_{p=1}^{n-1} \frac1p\sum_{i_1,\ldots,i_p}(f')^{[p-1]}(\mu_{i_1},\ldots,\mu_{i_p})\,
\left<V\psi_{i_1},\psi_{i_2}\right>\cdots\left<V\psi_{i_p},\psi_{i_1}\right>+\Tr\big(\mathcal{R}_{H_0,f,n}(V)\big)
\end{align*} and
\begin{align*}
\left|\Tr\big(\mathcal{R}_{H_0,f,n}(V)\big)\right|\leq c_{f,n}\cdot\big\|(1+H_0^2)^{-1}\big\|_1\cdot
\left(1+\|V\|+\|V\|^2\right)\cdot\|V\|^n,
\end{align*}
where $c_{f,n}$ is as in \eqref{eq:CL2}.
\end{thm}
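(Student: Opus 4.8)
The plan is to get the Taylor expansion essentially for free and to spend the work on the sharpened remainder estimate. Since $(1+H_0^2)^{-1/2}\in\fS^2$ forces $(1+H_0^2)^{-1/2}$, and hence the resolvent of $H_0$, to be compact, Corollary \ref{asexpev} applies verbatim under the present hypotheses and already delivers the stated expansion with $\Tr\big(\mathcal{R}_{H_0,f,n}(V)\big)=\mathcal{O}\big(\|V\|^n\big)$; in particular $\mathcal{R}_{H_0,f,n}(V)\in\fS^1$, each summand $T_{f^{[k]}}^{H_0,\ldots,H_0}(V,\ldots,V)$ lying in $\fS^1$ by Lemma \ref{CL2}. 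So the task reduces to replacing the crude factor $\sup_{t\in[0,1]}\Tr\big(E_{H_0+tV}(\supp f)\big)$ of Theorem \ref{derbound} by the intrinsic quantity $\big\|(1+H_0^2)^{-1}\big\|_1\big(1+\|V\|+\|V\|^2\big)$.

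The starting point is the integral representation \eqref{eq:integralTr}, which writes $\Tr\big(\mathcal{R}_{H_0,f,n}(V)\big)$ as $\frac{1}{(n-1)!}\int_0^1(1-t)^{n-1}\Tr\big(\frac{d^n}{ds^n}\big|_{s=t}f(H_0+sV)\big)\,dt$; by Theorem \ref{dermoi} the integrand equals $n!\,\Tr\big(T_{f^{[n]}}^{H_0+tV,\ldots,H_0+tV}(V,\ldots,V)\big)$ (no splitting of $f$ into positive and negative parts is needed here, since Lemma \ref{CL2}, unlike Lemma \ref{constants}, has no sign restriction). Now comes the crux. Applying Lemma \ref{resest} with $W=tV$, $t\in[0,1]$, and using that $\|tV\|\le\|V\|$ together with the monotonicity of $x\mapsto 1+x+x^2$ on $[0,\infty)$, one obtains the operator inequality
\[
(1+(H_0+tV)^2)^{-1}\le\big(1+\|V\|+\|V\|^2\big)(1+H_0^2)^{-1};
\]
since $(1+H_0^2)^{-1/2}\in\fS^2$ is the same as $(1+H_0^2)^{-1}\in\fS^1$, this yields both $(1+(H_0+tV)^2)^{-1/2}\in\fS^2$ — so Lemma \ref{CL2} applies with $H=H_0+tV$ (self-adjoint by Kato--Rellich) — and, on taking traces,
\[
\big\|(1+(H_0+tV)^2)^{-1/2}\big\|_2^2=\Tr\big((1+(H_0+tV)^2)^{-1}\big)\le\big(1+\|V\|+\|V\|^2\big)\big\|(1+H_0^2)^{-1}\big\|_1 ,
\]
a bound independent of $t$. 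Inserting this into Lemma \ref{CL2} gives
\[
\big\|T_{f^{[n]}}^{H_0+tV,\ldots,H_0+tV}(V,\ldots,V)\big\|_1\le c_{f,n}\cdot\big(1+\|V\|+\|V\|^2\big)\cdot\big\|(1+H_0^2)^{-1}\big\|_1\cdot\|V\|^n ,\qquad t\in[0,1].
\]

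Finally, exactly as in the proof of Theorem \ref{derbound}, the map $t\mapsto\frac{d^n}{ds^n}\big|_{s=t}f(H_0+sV)$ and its pointwise adjoint are continuous in the strong operator topology and, by the display just obtained, uniformly bounded in $\fS^1$; hence \cite[Lemma 3.10]{Azamov0} legitimizes passing from the operator identity \eqref{eq:integral} to the scalar identity \eqref{eq:integralTr}. Combining \eqref{eq:integralTr}, the uniform $\fS^1$-bound, and the elementary identity $\frac{n!}{(n-1)!}\int_0^1(1-t)^{n-1}\,dt=1$, the factorials cancel and
\[
\big|\Tr\big(\mathcal{R}_{H_0,f,n}(V)\big)\big|\le c_{f,n}\cdot\big\|(1+H_0^2)^{-1}\big\|_1\cdot\big(1+\|V\|+\|V\|^2\big)\cdot\|V\|^n ,
\]
as asserted. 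I do not expect a genuine obstacle: the only non-routine ingredient is the uniform control of $\big\|(1+(H_0+tV)^2)^{-1/2}\big\|_2$ along the segment $[0,1]\ni t\mapsto H_0+tV$, which is exactly what Lemma \ref{resest} supplies, while the rest is bookkeeping with tools already in place and the expansion is immediate from Corollary \ref{asexpev}.
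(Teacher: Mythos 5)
Your proposal is correct and follows essentially the same route as the paper: the expansion itself comes from the compact-resolvent case (the paper repeats the approximation argument of Theorem \ref{asexp}, you invoke Corollary \ref{asexpev}, which is the same content since $(1+H_0^2)^{-1/2}\in\fS^2$ forces compact resolvent), and the sharpened remainder bound is obtained exactly as the paper does, by combining Lemma \ref{resest} with $W=tV$, Lemma \ref{CL2} with $H=H_0+tV$, and the integral representation \eqref{eq:integralTr}. The uniform-in-$t$ control of $\big\|(1+(H_0+tV)^2)^{-1/2}\big\|_2^2$ and the cancellation of the factorials are carried out correctly.
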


\begin{proof}
The result follows upon applying Lemma \ref{resest} to $W=tV$, Lemma \ref{CL2} to $H=H_0+tV$, $t\in[0,1]$, repeating the approximation argument in the proof of Theorem \ref{asexp}, and using the integral representation for the remainder as in the proof of Theorem \ref{derbound}.
\end{proof}

We conclude with the discussion of the integral representations for $R_{H_0,V,1}(f)$ and $R_{H_0,V,2}(f)$. Let $C_c^3((a,b))$ denote the set of $C^3$-functions whose closed supports are compact subsets of $(a,b)$.

\begin{thm}$($\cite[Theorem 2.5]{ACS}$)$\label{R1}
Let $H_0=H_0^*$ have a compact resolvent and let $V=V^*\in\BH$. Then, for $f\in C_c^3((a,b))$,
\begin{align*}
\Tr\big(f(H_0+V)\big)=\Tr\big(f(H_0)\big)+\int_\Reals f'(\la)\Tr\big(E_{H_0}((a,\la])-E_{H_0+V}((a,\la])\big)\,d\la.
\end{align*}
\end{thm}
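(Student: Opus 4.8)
The plan is to prove Theorem~\ref{R1} (the $n=1$ integral representation) by combining the $n=1$ instance of the asymptotic expansion from Theorem~\ref{asexp} with the first part of Theorem~\ref{moimeasure} and a spectral-shift / antiderivative identity. First I would invoke Theorem~\ref{asexp} with $n=1$, which gives
\[
\Tr\big(f(H_0+V)\big)=\Tr\big(f(H_0)\big)+\Tr\big(\mathcal{R}_{H_0,f,1}(V)\big),
\]
so everything reduces to identifying $\Tr\big(\mathcal{R}_{H_0,f,1}(V)\big)$ with the stated integral. By Theorem~\ref{integral} (with $p=1$),
\[
\Tr\big(\mathcal{R}_{H_0,f,1}(V)\big)=\int_0^1 \Tr\left(\frac{d}{ds}\bigg|_{s=t}f(H_0+sV)\right)\,dt,
\]
where the passage of $\Tr$ under the $t$-integral is justified exactly as in the proof of Theorem~\ref{derbound} (strong-operator continuity plus uniform $\fS^1$-bounds, via Lemma~\ref{constants} and \cite[Lemma~3.10]{Azamov0}).

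Next, for fixed $t$ I would apply Theorem~\ref{moimeasure}(i) to $H=H_0+tV$, obtaining
\[
\Tr\left(\frac{d}{ds}\bigg|_{s=t}f(H_0+sV)\right)=\int_\Reals f'(\la)\,\Tr\big(E_{H_0+tV}(d\la)V\big).
\]
The key computational step is then to rewrite the right-hand side using integration by parts in $\la$: since $f\in C_c^3((a,b))$, $f'$ has compact support in $(a,b)$, so writing $\Tr\big(E_{H_0+tV}(d\la)V\big)$ as the differential of the finite-rank-valued (hence trace-class-valued, by Corollary~\ref{lemma:Efinite}) function $\la\mapsto \Tr\big(E_{H_0+tV}((a,\la])V\big)$, and using that the boundary term at $\la=b$ vanishes because $f'(b)=0$, one gets
\[
\int_\Reals f'(\la)\,\Tr\big(E_{H_0+tV}(d\la)V\big)=-\int_\Reals f''(\la)\,\Tr\big(E_{H_0+tV}((a,\la])V\big)\,d\la.
\]
Then I would observe the differentiation identity $\frac{d}{dt}\Tr\big(E_{H_0+tV}((a,\la])\big)$ ``$=$'' $\Tr\big(E_{H_0+tV}((a,\la])V\big)$ in the appropriate weak sense — more precisely, one uses that the spectral shift data satisfy $\frac{d}{dt}\Tr\big(g(H_0+tV)\big)=\Tr\big(g'(H_0+tV)V\big)$ and specializes; or, more robustly, one first integrates in $t$ and recognizes that $\int_0^1 \Tr\big(E_{H_0+tV}((a,\la])V\big)\,dt$ telescopes against the $t$-derivative of the spectral counting function to give $\Tr\big(E_{H_0+V}((a,\la])-E_{H_0}((a,\la])\big)$, using that $E_{H_0+tV}((a,\la])$ has finite rank for all $t$ and $\la$ in the relevant ranges (Corollary~\ref{lemma:Efinite}) so the trace is a genuine finite sum of eigenvalue-counting jumps varying continuously in $t$ off a measure-zero set.

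Carrying out the $t$-integration and swapping the order of the $t$ and $\la$ integrals (justified by the uniform bound $\sup_{t\in[0,1]}\Tr\big(E_{H_0+tV}((a,\la])V\big)\le \|V\|\cdot\sup_{t}\Tr\big(E_{H_0+tV}(\supp f)\big)<\infty$ from Corollary~\ref{lemma:Efinite} and Lemma~\ref{resest}, together with compact support of $f''$) then yields
\[
\Tr\big(\mathcal{R}_{H_0,f,1}(V)\big)=-\int_\Reals f''(\la)\int_0^1\Tr\big(E_{H_0+tV}((a,\la])V\big)\,dt\,d\la
=-\int_\Reals f''(\la)\,\Tr\big(E_{H_0+V}((a,\la])-E_{H_0}((a,\la])\big)\,d\la.
\]
A final integration by parts in $\la$ (again with vanishing boundary terms since $f'$ is compactly supported in $(a,b)$, and the $\la$-integrand $\Tr\big(E_{H_0+V}((a,\la])-E_{H_0}((a,\la])\big)$ is bounded and of bounded variation) converts this into $\int_\Reals f'(\la)\Tr\big(E_{H_0}((a,\la])-E_{H_0+V}((a,\la])\big)\,d\la$, completing the proof. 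The main obstacle is the middle step: rigorously justifying the identity $\int_0^1 \Tr\big(E_{H_0+tV}((a,\la])V\big)\,dt = \Tr\big(E_{H_0+V}((a,\la])-E_{H_0}((a,\la])\big)$, i.e.\ that the trace of the spectral projection below $\la$ is, as a function of $t$, absolutely continuous with the expected derivative; this requires care because $E_{H_0+tV}((a,\la])$ jumps in rank precisely when an eigenvalue of $H_0+tV$ crosses $\la$, so one argues for a.e.\ $\la$ (those not in the countable exceptional set of eigenvalue crossings) and uses Fubini plus the finite-rank control to push the identity through.
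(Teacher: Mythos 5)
Your reduction to the first-order remainder and the application of Theorem~\ref{moimeasure}(i) are fine, but the step you yourself flag as the main obstacle is not just delicate --- it is false as stated. You claim
\[
\int_0^1 \Tr\big(E_{H_0+tV}((a,\la])V\big)\,dt=\Tr\big(E_{H_0+V}((a,\la])-E_{H_0}((a,\la])\big).
\]
The right-hand side is an integer (a difference of eigenvalue counts), while the left-hand side is generically not; already for $H_0=\mu$ and $V=v>0$ scalars one computes the left side to be $\min(v,\la-\mu)_+$ and the right side to be $-\mathbb{1}_{[\mu,\mu+v)}(\la)$. The proposed justification also fails at its root: $t\mapsto\Tr\big(E_{H_0+tV}((a,\la])\big)$ is integer-valued and piecewise constant, so it is not absolutely continuous with derivative $\Tr\big(E_{H_0+tV}((a,\la])V\big)$; its a.e.\ derivative is $0$ and all the change is carried by jumps (this is exactly the spectral-flow phenomenon). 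The correct identity, which is what the Birman--Solomyak averaging actually gives, is
\[
\int_0^1 \Tr\big(E_{H_0+tV}((a,\la])V\big)\,dt=\int_a^{\la}\Tr\big(E_{H_0}((a,s])-E_{H_0+V}((a,s])\big)\,ds,
\]
i.e.\ the \emph{antiderivative} of $\xi(s)=\Tr\big(E_{H_0}((a,s])-E_{H_0+V}((a,s])\big)$, not $\pm\xi$ itself. With your version of the identity the final step also breaks down: you would be left with $\int_{\Reals} f''(\la)\,\xi(\la)\,d\la$, and integrating that by parts produces a Stieltjes integral $-\int f'\,d\xi$ against the integer-valued jump measure $d\xi$, not the Lebesgue integral $\int f'(\la)\,\xi(\la)\,d\la$ in the statement. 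Your argument could be repaired by inserting the antiderivative identity above, but proving that identity is essentially as hard as the theorem itself.

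For comparison, the paper's proof is a two-line computation that avoids all $t$-differentiation: since $f$ is supported in $(a,b)$ and $E_H((a,\la])$ has finite rank (Corollary~\ref{lemma:Efinite}), the spectral theorem gives $\Tr\big(f(H)\big)=\int_{\Reals}f(\la)\,d\,\Tr\big(E_H((a,\la])\big)$ for $H\in\{H_0,H_0+V\}$, and a single integration by parts in $\la$ (boundary terms vanish by compact support) turns the difference of these two Stieltjes integrals directly into $\int_{\Reals}f'(\la)\Tr\big(E_{H_0}((a,\la])-E_{H_0+V}((a,\la])\big)\,d\la$. No Taylor remainder, no Birman--Solomyak average, and no regularity of $t\mapsto E_{H_0+tV}$ is needed.
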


\begin{proof}
Applying the spectral theorem, Corollary \ref{lemma:Efinite}, and performing integration by parts gives
\begin{align*}
&\Tr\big(f(H_0+V)\big)-\Tr\big(f(H_0)\big)\\
&\quad=\int_\Reals f(\la)\,d\,\Tr\big(E_{H_0+V}((a,\la])\big)
-\int_\Reals f(\la)\,d\,\Tr\big(E_{H_0}((a,\la])\big)\\
&\quad=\int_\Reals f'(\la)\Tr\big(E_{H_0}((a,\la])-E_{H_0+V}((a,\la])\big)\,d\la.
\end{align*}
\end{proof}

\begin{thm}
\label{ac}
Let $H_0=H_0^*$ satisfy $(1+H_0^2)^{-1/2}\in \fS^2$ and let $V=V^*\in\BH$. Denote $u(t)=(1+t^2)^{1/2}$. Then, there is a locally integrable function
$\eta=\eta_{H_0,V}$ such that
\begin{align}
\label{trf}
R_{H_0,f,2}(V)=\int_\Reals f''(t)\eta(t)\,dt,\quad\text{for }f\in C_c^3,
\end{align}
and
\[\int_{[a,b]}|\eta(t)|\,dt\leq C_{a,b}\cdot\|(1+H_0^2)^{-1}\big\|_1\cdot
\left(1+\|V\|+\|V\|^2\right)\cdot\|V\|^2,\]
where
\begin{align*}
C_{a,b}\leq 9\cdot\max\big\{1,(b-a)^2\big\}\cdot\max\big\{2,\|u\|_{L^\infty([a,b])},\|u^2\|_{L^\infty([a,b])},
\|(u^2)'\|_{L^\infty([a,b])}\big\}
\end{align*}
and $R_{H_0,f,2}(V)$ is given by \eqref{eq:sum}.
\end{thm}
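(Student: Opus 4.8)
The plan is to establish the integral representation \eqref{trf} by combining the operator-level remainder formula from Theorem \ref{integral} with the trace formula of Theorem \ref{moimeasure}(ii), and then to identify the resulting measure on $\Reals$ as absolutely continuous by a spectral-shift-function style argument. First I would write, for $f\in C_c^3$ and $n=2$,
\[
R_{H_0,f,2}(V)=\Tr\big(\mathcal{R}_{H_0,f,2}(V)\big)
=\int_0^1(1-t)\,\Tr\!\left(\frac{d^2}{ds^2}\bigg|_{s=t}f(H_0+sV)\right)dt,
\]
using \eqref{eq:integralTr} from the proof of Theorem \ref{derbound}. For each fixed $t$, Lemma \ref{resest} gives $(1+(H_0+tV)^2)^{-1/2}\in\fS^2$, so $V$ is a ``Hilbert--Schmidt type'' perturbation of $H_0+tV$ in the sense required by Theorem \ref{moimeasure}(ii); applying that theorem with $p=2$ and $H=H_0+tV$ yields
\[
\Tr\!\left(\frac{d^2}{ds^2}\bigg|_{s=t}f(H_0+sV)\right)
=\int_{\Reals^2}(f')^{[1]}(\la_1,\la_2)\,\Tr\big(E_{H_0+tV}(d\la_1)\,V\,E_{H_0+tV}(d\la_2)\,V\big).
\]
The divided difference identity $(f')^{[1]}(\la_1,\la_2)=\int_0^1 f''(\la_2+r(\la_1-\la_2))\,dr$ lets me rewrite this as $\int_\Reals f''(x)\,d\nu_t(x)$ for a signed measure $\nu_t$ of finite total variation built by pushing forward $\Tr\big(E_{H_0+tV}(d\la_1)VE_{H_0+tV}(d\la_2)V\big)$ under the map $(\la_1,\la_2,r)\mapsto \la_2+r(\la_1-\la_2)$; integrating in $t$ produces $R_{H_0,f,2}(V)=\int_\Reals f''(x)\,d\nu(x)$ with $\nu=\int_0^1(1-t)\nu_t\,dt$.

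The substance of the theorem is that $\nu$ is \emph{absolutely continuous} with a locally integrable density, together with the quantitative bound on $\int_{[a,b]}|\eta|$. For the local $L^1$ bound, I would test \eqref{trf} against functions $f$ supported in $[a,b]$: choosing $f''$ to range over the unit ball of $C_c([a,b])$ (or approximating $\sign(\eta)\mathbf 1_{[a,b]}$), the total variation of $\nu$ on $[a,b]$ is controlled by $\sup\{|R_{H_0,f,2}(V)| : f\in C_c^3,\ \supp f\subset[a,b],\ \|f''\|_\infty\le 1\}$, and this is exactly what Theorem \ref{asexpL2} (via Lemma \ref{CL2} with $n=2$) estimates — one only needs to note that for $f$ supported in $[a,b]$ the quantities $\|fu^2\|_{G_2}$, $\|fu\|_{G_k}$, $\|f\|_{G_k}$, $\|u\|_{G_l}$ appearing in $c_{f,2}$ are all bounded by constants of the shape $\max\{1,(b-a)^2\}\cdot\max\{2,\|u\|_{L^\infty[a,b]},\|u^2\|_{L^\infty[a,b]},\|(u^2)'\|_{L^\infty[a,b]}\}$ times $\|f''\|_\infty$, after integrating $f''$ twice and using that $f,f'$ vanish outside $[a,b]$; carrying this bookkeeping through yields $C_{a,b}$ with the stated constant $9$. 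This bounds only the total variation on $[a,b]$, so it remains to upgrade ``locally finite signed measure'' to ``absolutely continuous measure.''

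For absolute continuity, the natural route is the one used in \cite{ACS} for the first-order functional $f'\mapsto R_{H_0,f,1}(V)$ (referenced right after \eqref{r1} in the introduction): reduce to a statement about the difference of two spectral measures. Concretely, I would iterate Theorem \ref{R1}. Writing $R_{H_0,f,2}(V)=R_{H_0,f,1}(V)-\Tr\big(T_{f^{[1]}}^{H_0,H_0}(V)\big)$, Theorem \ref{R1} identifies $R_{H_0,f,1}(V)=\int f'(\la)\,\xi_1(\la)\,d\la$ with $\xi_1(\la)=\Tr\big(E_{H_0}((a,\la])-E_{H_0+V}((a,\la])\big)$ a bounded (hence locally integrable) function, and an integration by parts moves one derivative: $\int f'\xi_1 = -\int f'' \Xi_1$ where $\Xi_1(x)=\int_a^x\xi_1$. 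For the second term, Theorem \ref{moimeasure}(i)/(ii) together with the $\fS^2$ hypothesis expresses $\Tr\big(T_{f^{[1]}}^{H_0,H_0}(V)\big)=\int f'(\la)\,\Tr\big(E_{H_0}(d\la)V\big)$ when $V\in\fS^1$, and in general as the limit of such expressions; the key point is that $\la\mapsto \Tr\big(E_{H_0}((a,\la])V\big)$ — or, for the genuinely second-order piece, the analogue coming from $\Tr\big(E_{H_0}(d\la_1)VE_{H_0}(d\la_2)V\big)$ — is of bounded variation, so after one integration by parts it too contributes a locally integrable density against $f''$. Collecting terms gives the locally integrable $\eta$. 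The main obstacle I anticipate is precisely this last identification: showing that the second-order spectral measure $\Tr\big(E_{H_0}(d\la_1)VE_{H_0}(d\la_2)V\big)$, pushed forward under the divided-difference substitution, has an absolutely continuous marginal rather than merely finite total variation — this is where one must exploit that $V$ is Hilbert--Schmidt relative to $H_0$ (equivalently $\widetilde V=(1+H_0^2)^{-1/2}V(1+H_0^2)^{-1/2}\in\fS^1$ and $W=(1+H_0^2)^{-1/2}V\in\fS^2$, as in Lemma \ref{CL2}) and likely re-run the argument of \cite{ACS} with $V$ replaced by the appropriate sandwiched operator, reducing to absolute continuity of a single spectral shift type function in one variable.
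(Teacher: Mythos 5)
Your overall architecture --- a bound of the form $|R_{H_0,f,2}(V)|\le C_{a,b}\,\|f''\|_\infty\cdot(\cdots)$ feeding a Riesz representation argument, followed by absolute continuity via the splitting $R_{H_0,f,2}(V)=R_{H_0,f,1}(V)-\Tr\big(T_{f^{[1]}}^{H_0,H_0}(V)\big)$ and one more integration by parts --- is the paper's. But the quantitative step has a genuine gap: the constant $c_{f,2}$ of Lemma \ref{CL2} is built from the seminorms $\|fu^2\|_{G_2}$, $\|fu\|_{G_k}$, etc., and $\|g\|_{G_2}$ contains $\|g'''\|_2$; no amount of ``integrating $f''$ twice'' controls the third derivative of $f$ by $\|f''\|_\infty$. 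So Theorem \ref{asexpL2} cannot deliver the bound $\sup\{|R_{H_0,f,2}(V)|:\supp f\subset[a,b],\ \|f''\|_\infty\le 1\}<\infty$ that your Riesz step needs. The paper has to redo the estimate using only sup-norm data: it takes the $n=2$ case of the decomposition \eqref{Tfn}, bounds the first-order pieces by Theorem \ref{HSest} (which requires only $\|\phi\|_\infty$ of the symbol, not a $G_k$-norm), and reduces $\Tr\big(T_{u^{[2]}}(W_t,W_t^*)\big)$ and $\Tr\big(T_{(fu^2)^{[2]}}(W_t,W_t^*)\big)$ to double operator integrals with the bounded symbol $\phi(\la_0,\la_1)=u^{[2]}(\la_0,\la_0,\la_1)$ (resp.\ $(fu^2)^{[2]}(\la_0,\la_0,\la_1)$); only this yields \eqref{of} with the constant $9\max\{1,(b-a)^2\}\cdots$. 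A second, smaller problem is your opening application of Theorem \ref{moimeasure}(ii): that theorem requires $V\in\fS^2$, and compactness (or Hilbert--Schmidt-ness) of the resolvent of $H_0+tV$ does not substitute for it when $V$ is merely bounded. This detour is in any case unnecessary once the Riesz argument is in place.

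On absolute continuity you are closer than you think, and the obstacle you flag at the end is a non-issue: no ``second-order spectral measure'' enters, because the splitting $R_{H_0,f,2}(V)=R_{H_0,f,1}(V)-\Tr\big(T_{f^{[1]}}^{H_0,H_0}(V)\big)$ involves only first-order objects. Theorem \ref{R1} gives $R_{H_0,f,1}(V)=\int_\Reals f'(\la)\xi(\la)\,d\la$ with $\xi$ locally bounded, and the sandwiching $T_{f^{[1]}}^{H_0,H_0}(V)=\hat T_{uf^{[1]}u}^{H_0,H_0}\big((1+H_0^2)^{-1/2}V(1+H_0^2)^{-1/2}\big)$ together with Theorem \ref{HSest} gives $\big|\Tr\big(T_{f^{[1]}}^{H_0,H_0}(V)\big)\big|\le\widetilde C_{a,b}\|f'\|_\infty\cdot\|(1+H_0^2)^{-1}\|_1\cdot\|V\|$, hence a locally finite (not necessarily absolutely continuous) measure $\mu$ with $\Tr\big(T_{f^{[1]}}^{H_0,H_0}(V)\big)=\int_\Reals f'\,d\mu$. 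One integration by parts then produces the density $\eta(\la)=\mu((a,\la))-\int_a^\la\xi(t)\,dt$, which is locally integrable automatically. Note that the sandwiching is needed here as well, since $V\notin\fS^1$ in general and $\Tr\big(E_{H_0}(\cdot)V\big)$ is not defined directly; your closing remark correctly anticipates this, but the heavy lifting you expect (absolute continuity of a two-variable spectral measure) is never required.
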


\begin{proof}
%Now we prove the case $n\geq 2$.
Let $H_t=H_0+tV$ and $W_t=(1+H_t^2)^{-1/2}V$, for $t\in [0,1]$. As a particular case of \eqref{Tfn}, we have
\begin{align*}
T_{f^{[2]}}^{H_t,H_t,H_t}(V,V)&=T_{(fu^2)^{[2]}}^{H_t,H_t,H_t}(W_t,W_t^*)\\
&\quad-\hat T_{(fu)^{[1]}}^{H_t,H_t}(W_t)\cdot \hat T_{u^{[1]}}^{H_t,H_t}(W_t^*)-(fu)(H)\cdot T_{u^{[2]}}^{H_t,H_t,H_t}(W_t,W_t^*)\\
&\quad-\hat T_{u^{[1]}}^{H_t,H_t}(W_t)\cdot \hat T_{(fu)^{[1]}}^{H_t,H_t}(W_t^*)- T_{u^{[2]}}^{H_t,H_t,H_t}(W_t,W_t^*)\cdot (fu)(H)\\
&\quad-\hat T_{u^{[1]}}^{H_t,H_t}(W_t)\cdot f(H)\cdot \hat T_{u^{[1]}}^{H_t,H_t}(W_t^*).
\end{align*}
Therefore, with application of Theorem \ref{HSest}, H\"{o}lder's inequality, and $\|u'\|_\infty\leq 1$ we have
\begin{align}
\label{Tf2}
\nonumber
\left|\Tr\left(T_{f^{[2]}}^{H_t,H_t,H_t}(V,V)\right)\right|&\leq \left|\Tr\left(T_{(fu^2)^{[2]}}^{H_t,H_t,H_t}(W_t,W_t^*)\right)\right|\\
\nonumber
&+2\cdot\|(fu)'\|_\infty\cdot\|W_t\|_2^2+2\cdot\|fu\|_\infty\cdot
\left|\Tr\left(T_{u^{[2]}}^{H_t,H_t,H_t}(W_t,W_t^*)\right)\right|\\
&+\|f\|_\infty\cdot\|W_t\|_2^2.
\end{align}
One can derive from \eqref{trans} that
\[\Tr\left(T_{u^{[2]}}^{H_t,H_t,H_t}(W_t,W_t^*)\right)=\Tr\left(T_\phi^{H_t,H_t}(W_t)W_t^*\right),
\quad\text{with}\quad\phi(\la_0,\la_1)=u^{[2]}(\la_0,\la_0,\la_1)\]
(for more details, see, e.g., \cite[Lemma 3.8]{moissf}). Hence, by H\"{o}lder's inequality, the equality $T_\phi=\hat T_\phi$ (see \cite[Lemma 3.5]{PSS}), and Theorem \ref{HSest},
\begin{align}
\label{u2}
\nonumber
\left|\Tr\left(T_{u^{[2]}}^{H_t,H_t,H_t}(W_t,W_t^*)\right)\right|&\leq \big\|\hat T_\phi^{H_t,H_t}(W_t)\big\|_2\cdot\|W_t\|_2
\leq \big\|u^{[2]}\big\|_\infty\cdot\|W_t\|_2^2\\&\leq\frac12\cdot\|u''\|_\infty\cdot\|W_t\|_2^2\leq\|W_t\|_2^2.
\end{align}
Similarly,
\begin{align}
\label{fu2}
\left|\Tr\left(T_{(fu^2)^{[2]}}^{H_t,H_t,H_t}(W_t,W_t^*)\right)\right|\leq \frac12\cdot\|(fu^2)''\|_\infty\cdot\|W_t\|_2^2.
\end{align}
Since
\begin{align*}
\big\|(fg)^{(k)}\big\|_\infty=\left\|\sum_{j=0}^k\begin{pmatrix}k\\j\end{pmatrix}f^{(j)}g^{(k-j)}\right\|_\infty
&\leq \sum_{j=0}^k \begin{pmatrix}k\\j\end{pmatrix}\big\|f^{(j)}\big\|_\infty\big\|g^{(k-j)}\big\|_\infty\\
&\leq 2^k\cdot\max_{0\leq j\leq k}\big\|f^{(j)}\big\|_\infty\cdot\max_{0\leq l\leq k}\big\|g^{(l)}\big|\big\|_\infty,
\end{align*}
\[\big\|f^{(j)}\big\|_\infty\leq \big\|f^{(n)}\big\|_\infty\cdot (b-a)^{n-j},\quad 0\leq j\leq n,\]
and
\[(u^2)''\equiv 2,\]
we have that for $f\in C_c^3((a,b))$,
\begin{align}
\label{fi}
&\|(fu)'\|_\infty\leq 2\cdot\|f''\|_\infty\cdot\max\big\{1,(b-a)^2\big\}\cdot\|u\|_{L^\infty([a,b])},\\
\nonumber
&\|(fu^2)''\|_\infty\leq 4\cdot\|f''\|_\infty\cdot\max\big\{1,(b-a)^2\big\}
\cdot\max\big\{2,\|u^2\|_{L^\infty([a,b])},\|(u^2)'\|_{L^\infty([a,b])}\big\}.
\end{align}
Combination of the inequalities \eqref{Tf2} - \eqref{fi} ensures the bound
\begin{multline}
\label{of}
\left|\Tr\left(T_{f^{[2]}}^{H_t,H_t,H_t}(V,V)\right)\right|\leq \|f''\|_\infty\cdot\|W_t\|_2^2\\
\times
9\cdot\max\big\{1,(b-a)^2\big\}\cdot\max\big\{2,\|u\|_{L^\infty([a,b])},\|u^2\|_{L^\infty([a,b])},
\|(u^2)'\|_{L^\infty([a,b])}\big\}.
\end{multline}
Applying \eqref{of}, Lemma \ref{resest}, and Theorems \ref{integral} and \ref{dermoi} gives
\begin{align}
\label{RC}
\big|R_{H_0,f,2}(V)\big|\leq C_{a,b}\cdot\big\|f''\big\|_\infty\cdot\big\|(1+H_0^2)^{-1}\big\|_1\cdot
\left(1+\|V\|+\|V\|^2\right)\cdot\|V\|^2.
\end{align}
Hence, by the Riesz representation theorem for a functional in $\big(C_c(\Reals)\big)^*$, there is a locally finite measure $\nu=\nu_{H_0,V}$, with \[\int_{[a,b]}d|\nu|\leq C_{a,b}\cdot\|(1+H_0^2)^{-1}\big\|_1\cdot
\left(1+\|V\|+\|V\|^2\right)\cdot\|V\|^2,\] such that
\begin{align}
\label{Rtr}
R_{H_0,f,2}(V)=\int_\Reals f''(t)\,d\nu(t),\quad\text{for }f\in C_c^3.
\end{align}

We are left to prove absolute continuity of $\nu$.
By adjusting the proof of \cite[Theorem 2.25]{HS-compatible}, we derive the representation
\begin{align*}
T_{f^{[1]}}^{H_0,H_0}(V)=\hat T_F^{H_0,H_0}\big((1+H_0^2)^{-1/2}V\big)\big((1+H_0^2)^{-1/2},\quad\text{for } f\in C_c^3((a,b)),
\end{align*}
where
\[F(\la_0,\la_1)=u(\la_0)f^{[1]}(\la_0,\la_1)u(\la_1),\quad \|F\|_\infty\leq \widetilde C_{a,b}\cdot\|f'\|_\infty.\]
Hence, by Theorem \ref{dermoi}, H\"{o}lder's inequality, and Theorem \ref{HSest},
\begin{align*}
\left|\Tr\left(\frac{d}{dt}\bigg|_{t=0}f(H_0+tV)\right)\right|\leq \widetilde C_{a,b}\cdot\big\|f'\big\|_\infty\cdot\big\|(1+H_0^2)^{-1}\big\|_1\cdot\|V\|.
\end{align*}
Therefore, there exists a locally finite measure $\mu=\mu_{H_0,V}$ such that
\begin{align}
\label{Dtr}
\Tr\left(\frac{d}{dt}\bigg|_{t=0}f(H_0+tV)\right)=\int_\Reals f'(t)\,d\mu(t),\quad\text{for }f\in C_c^3.
\end{align}
%admits the bound
%\[\int_{[a,b]}d|\mu_n|\leq C_{u,b-a,n}\cdot\|(1+H_0^2)^{-1}\big\|_1\cdot
%\left(1+\|V\|+\|V\|^2\right)\cdot\|V\|^n.\]
Let
\begin{align}
\label{xic}
\xi(\la):=\Tr\big(E_{H_0}((a,\la])-E_{H_0+V}((a,\la])\big).
\end{align}
By Theorem \ref{R1} and by \eqref{Dtr}, we have
\begin{align*}
R_{H_0,f,2}(V)&=\Tr\big(f(H_0+V)\big)-\Tr\big(f(H_0)\big)-\Tr\left(\frac{d}{dt}\bigg|_{t=0}f(H_0+tV)\right)\\
&=\int_\Reals f'(\la)\xi(\la)\,d\la-\int_\Reals f'(\la)\,d\mu(\la).
\end{align*}
Integrating by parts yields
\begin{align*}
R_{H_0,f,2}(V)=\int_\Reals f''(\la)\left(\mu((a,\la))-\int_a^\la \xi(t)\,dt\right)d\la,\quad\text{for }f\in C_c^3((a,b)),
\end{align*}
completing the proof of the absolute continuity of $\nu$.
\end{proof}

\begin{remark}
Analogs of the function $\xi$ given be \eqref{xic} and the function $\eta$ given by \eqref{trf} have long history in perturbation theory. It was established in \cite{Krein}, \cite{Kop}, and \cite{PSS} for $n=1$, $n=2$, and $n\geq 3$, respectively,
that there exists an integrable function $\eta_n=\eta_{n,H_0,V}$ such that
\begin{align*}
\Tr\big(\mathcal{R}_{H_0,f,n}(V)\big)=\int_\Reals f^{(n)}(t)\eta_n(t)\,dt
\end{align*} for sufficiently nice functions $f$ (including $f\in C_c^{n+1}$), provided $V=V^*\in \fS^n$ and $H_0=H_0^*$ (without restrictions on the resolvent of $H_0$). If the resolvent of $H_0$ is compact and $V\in\BH$, then $\xi$ essentially coincides with the spectral flow (see \cite{ACS}). An analog of \eqref{trf} (with substantially modified left hand side)
%a measure $\nu_1$ instead of a function $\eta_1$
was obtained for $H_0=H_0^*$ and $V=V^*$ satisfying $(1+H_0^2)^{-1/2}V\in\fS^2$ in \cite[Theorem 4.9]{HS-compatible}.
\end{remark}

\bibliographystyle{plain}

\begin{thebibliography}{99}
\bibitem{Azamov0}
N.~A.~Azamov, A.~L.~Carey, P.~G.~Dodds, F.~A.~Sukochev,
{\it Operator integrals, spectral shift, and spectral flow,} Canad.
J. Math. {\bf 61} (2009), no. 2, 241--263.

\bibitem{ACS}
N.~A.~Azamov, A.~L.~Carey, F.~A.~Sukochev,
{\it The spectral shift function and spectral flow,} Comm. Math.
Phys. {\bf 276} (2007), no.~1, 51--91.

\bibitem{BS}
M. Sh. Birman, M. Solomyak, {\it Double operator integrals in a Hilbert space,} Integral Equations Operator Theory {\bf 47} (2003), no. 2, 131-–168.

\bibitem{B}
J.-M. Bouclet, {\it Spectral distributions for long range perturbations,} J. Funct. Anal. {\bf 212} (2004), 431--471.

\bibitem{CPh}
A.~L.~Carey, J.~Phillips, {\it Unbounded Fredholm modules and spectral flow,} Canad. J. Math. {\bf 50} (1998), 673–-718.

\bibitem{CC}
A. H. Chamseddine, A. Connes, {\it The spectral action principle,} Comm. Math. Phys. {\bf 186} (1997), 731--750.

\bibitem{DK}
Yu. L. Daleckii, S. G. Krein, {\it Integration and differentiation of functions of Hermitian operators and applications to the theory of perturbations,} (Russian) Voronež. Gos. Univ. Trudy Sem. Funkcional. Anal. {\bf 1956} (1956), no. 1, 81-–105.

\bibitem{ds}
K. Dykema, A. Skripka, {\it Higher order spectral shift,} J. Funct. Anal.
{\bf 257} (2009), 1092--1132.

\bibitem{Hansen}
F. Hansen, {\it Trace functions as Laplace transforms,} J. Math. Phys. {\bf 47} (2006) 043504, 11 pp.

\bibitem{Krein} M.~G.~Krein, \emph{On a trace formula in perturbation
theory}, Matem. Sbornik {\bf 33} (1953), 597--626 (Russian).

\bibitem{Kop} L.~S.~Koplienko, \emph{Trace formula for perturbations
    of nonnuclear type}, Sibirsk. Mat. Zh.  {\bf 25} (1984), 62--71
  (Russian). English transl.\ in Siberian Math. J. {\bf 25} (1984),
  735--743.

\bibitem{L}
I. M. Lifshits, {\it On a problem of the theory of perturbations connected with quantum statistics,} Uspehi Matem. Nauk (N.S.), {\bf 7} (1952), no. 1 (47), 171--180. (Russian)

\bibitem{PSS}
D.~Potapov, A.~Skripka, F.~Sukochev, {\it Spectral shift function of higher order,} Invent. Math., {\bf 193} (2013), no. 3, 501--538.

\bibitem{HS-compatible}
D.~Potapov, A.~Skripka, F.~Sukochev, {\it On Hilbert-Schmidt compatibility,} Oper. Matrices, {\bf 7} (2013), no. 1, 1--34.

\bibitem{PS-Crelle} D.~Potapov and F.~Sukochev, \emph{Unbounded
Fredholm modules and double operator integrals}, J. reine. angew.
Math. {\bf 626} (2009), 159--185.

%\bibitem{Schwartz}
%J.~T.~Schwartz, Nonlinear functional analysis, Gordon and Breach Science Publishers, New York - London - Paris, 1969.

\bibitem{moissf} A.~Skripka, {\it Multiple operator integrals and
spectral shift,} Illinois J. Math., 55 (2011), no. 1, 305--324.

\bibitem{vanS}
W. D. van Suijlekom, {\it Perturbations and operator trace functions,} J. Funct. Anal. {\bf 260} (2011), no. 8, 2483–-2496.

\end{thebibliography}

\end{document}